\newtheorem{defn}{Definition}[section]
\newtheorem{theo}[defn]{Theorem}
\newtheorem{lem}[defn]{Lemma}
\newtheorem{prop}[defn]{Proposition}
\newtheorem{cor}[defn]{Corollary}
\newtheorem{rem}[defn]{Remark}
\newtheorem{exam}[defn]{Example}
\newenvironment{proof}{{\bf Proof }}{{\vskip 0.1cm \hfill$\Box$}}
\def\N {{\mathbb N}}
\def\R {{\mathbb R}}
\def\E{{\mathbb E}}
\def\P{{\mathbb P}}
\def\M{{\mathbb M}}
\newcommand{\F}{\mathcal{F}}
\begin{document}
\noindent
{{\Large\bf Existence, uniqueness and ergodic properties for time-homogeneous It\^o-SDEs with locally integrable drifts and 
Sobolev diffusion coefficients}
{\footnote{This research was supported by Basic Science Research Program through the National Research Foundation of Korea(NRF) funded by the Ministry of Education(NRF-2017R1D1A1B03035632).}}\\ \\
\bigskip
\noindent
{\bf Haesung Lee},
{\bf Gerald Trutnau}
}\\

\noindent
{\small{\bf Abstract.}  
Using elliptic and parabolic regularity results in $L^p$-spaces and generalized Dirichlet form theory, we construct for every starting point weak solutions to SDEs in $\mathbb{R}^d$ up to their explosion times including the following conditions. For arbitrary but fixed $p>d$ the diffusion coefficient $A=(a_{ij})_{1\le i,j\le d}$ is locally uniformly strictly elliptic with functions $a_{ij}\in H^{1,p}_{loc}(\mathbb{R}^d)$ and the drift coefficient $\mathbf{G}=(g_1,\dots, g_d)$ consists of functions $g_i\in L^p_{loc}(\mathbb{R}^d)$. The solution originates by construction from a Hunt process with continuous sample paths on the one-point compactification of $\mathbb{R}^d$ and the corresponding SDE is by a known local well-posedness result pathwise unique up to an explosion time.  Just under the given assumptions we show irreducibility and the strong Feller property on  $L^{1}(\mathbb{R}^d,m)+L^{\infty}(\mathbb{R}^d,m)$ of its transition function, and the strong Feller property on $L^{q}(\mathbb{R}^d,m)+L^{\infty}(\mathbb{R}^d,m)$, $q=\frac{dp}{d+p}\in (d/2,p/2)$, of its resolvent, which both include the classical strong Feller property. We present moment inequalities and classical-like non-explosion criteria for the solution which lead to pathwise uniqueness results up to infinity under presumably optimal general non-explosion conditions. We further present explicit conditions for recurrence and ergodicity, including existence as well as uniqueness of invariant probability measures. \\

\noindent
{Mathematics Subject Classification (2010): primary; 60H20, 47D07, 60J35; secondary: 31C25, 60J60, 35B65.}\\

\noindent 
{Keywords: pathwise uniqueness, non-explosion, recurrence, ergodicity, invariant probability measure, strong Feller property, elliptic and parabolic regularity.}

\section{Introduction}\label{one}
Consider the stochastic differential equation (SDE)\\
\begin{eqnarray}\label{Ito1}
X_t= x_0 + \int_0^t\sigma(X_s)dW_s + \int^{t}_{0} \mathbf{G}(X_s) ds, \ 0\le t< \zeta,\ x_0\in \mathbb{R}^d, 
\end{eqnarray}
where $W=(W^1,\ldots,W^m)$ is a standard $m$-dimensional Brownian motion starting from zero, $A=(a_{ij})_{1\le i,j\le d}=\sigma\sigma^T$, $\sigma=(\sigma_{ij})_{1\le i\le d, 1\le j\le m}$ and $\mathbf{G}=(g_1,\ldots,g_d)$ are measurable and 
$$
\zeta:=\inf\{t\ge 0\, : \, X_t\notin \R^d \}=\lim_{n\to \infty}\inf\{t\ge 0\, : \, X_t\notin B_n \}
$$
is the explosion time (or life time) of $X$, i.e. the time when $X$ has left any open Euclidean ball $B_n$ of radius $n$ about the origin.
By a classical result, if $\sigma, \mathbf{G}$ consist of locally Lipschitz continuous functions and satisfy a linear growth condition, then (\ref{Ito1}) with $\zeta=\infty$ has a pathwise unique solution that is strong, i.e. adapted to the filtration generated by $W$ (\cite[IV. Theorems 2.4 and 3.1]{IW89}). Note that the just mentioned reference and most of those below also treat the time inhomogeneous case but we only discuss results in the time homogeneous case, i.e. results related to (\ref{Ito1}). \\
We call a solution that is pathwise unique and strong up to $\zeta$ ($\zeta$ being possibly finite, cf. \cite[IV. Definition 2.1]{IW89}) strongly unique up to $\zeta$. Strong uniqueness results for (\ref{Ito1}) with $\zeta=\infty$ for only measurable coefficients were given starting from \cite{Zvon}, \cite{Ver79}, \cite{Ver81}. In these works $\sigma$ is non-degenerate and $\sigma, \mathbf{G}$ are bounded. Regarding bounded coefficients  one can also mention the later work
\cite{Ba}. To our knowledge the first strong uniqueness results for unbounded measurable coefficients start with \cite[Theorem 2.1]{GyMa}, while weak existence results appeared to exist earlier (cf. introduction of \cite{GyMa}). In \cite[Theorem 2.1]{GyMa} $\sigma$ may be chosen locally Lipschitz, with $\sigma\sigma^T$ globally uniformly strictly elliptic and $g_i\in L^{2(d+1)}_{loc}(\R^d)$ with the following growth condition to ensure non-explosion (\cite[Assumption 2.1]{GyMa}): there exists a constant $M\ge 0$ and a non-negative function $F \in L^{d+1}(\R^d)$ such that almost everywhere
$$
\|\mathbf{G}\|=\left (\sum_{i=1}^d g_i^2\right )^{1/2} \le M+F.
$$
Note that this growth condition does not allow for linear growth and that it depends only on almost every point, which is natural since integrals such as the one in (\ref{Ito1}) involving $\mathbf{G}$ should not depend on the particular Borel version chosen for $\mathbf{G}$. In \cite{Zh05}, the following result was obtained: if $\sigma$ consists of continuous functions and is globally uniformly non-degenerate, i.e. $A(x)\ge C\cdot \text{Id}$ in the quadratic form sense for some  constant $C>0$ and every $x\in \R^d$ and $g_i,\partial_k \sigma_{ij}\in L^{2(d+1)}_{loc}(\R^d)$ for any $i,j,k$, then (\ref{Ito1}) has a strongly unique solution up to its explosion time. In \cite[Theorem 1.1(i) and (ii)]{Zh05} two non-explosion conditions are given. Both require the global
boundedness of $\sigma$ and then only depend on $\mathbf{G}$. The first one is similar to the one of \cite{GyMa} given above. The second one is as follows: there exist a constant $M\ge 0$,  and vector fields $\mathbf{H}$, $\mathbf{F}_i$, with $\|\mathbf{F}_i \| \in L^{p_i}(\R^d)$, $p_i\ge 2(d+1)$, such that almost everywhere
$$
\mathbf{G}=\sum_{i=1}^k \mathbf{F}_i +\mathbf{H} \ \ \text{with } \ \ \|\mathbf{H}(x)\|\le M\left (1+1_{\{\|x\|>e\}}\|x\|\log\|x\|\right ).
$$
This non-explosion condition allows for linear growth and can cover singularities of $\mathbf{G}$, a phenomenon that can not occur for SDEs with continuous coefficients, since these are of course locally bounded. Prior to \cite{Zh05}, the following  was obtained in \cite{KR}: if $\sigma$ is the identity matrix, so that the local martingale part in (\ref{Ito1}) is just a $d$-dimensional Brownian motion $W=(W^1,\ldots,W^d)$ and $g_i \in L^{p}_{loc}(\R^d)$, $1\le i\le d$, for some $p>d$, with
\begin{eqnarray}\label{con1}
\int^{t}_{0} \|\mathbf{G}(X_s)\|^r ds<\infty\ \  \P_{x_0}\text{-almost surely on } \ \{t<\zeta\},
\end{eqnarray}
where $r=2$ and $\P_{x_0}$ is the distribution on the paths starting form $x_0$, then (\ref{Ito1}) has a strongly unique solution up to its explosion time. Besides a global $L_{q^-}L_p$-condition which does not allow for linear growth a rather special and not really explicit non-explosion condition is presented in \cite{KR}. Its formulation is quite long but roughly one can say it is given by 
assuming that $\mathbf{G}$ is the weak gradient of a function $\psi$ which is a kind of Lyapunov function for (\ref{Ito1}). For the precise statement, we refer to \cite[Assumption 2.1]{KR}. The strong uniqueness result of \cite{KR} was generalized among others in \cite[Theorem 1.3]{Zh11} to the case of non-trivial continuous $d\times d$-dispersion matrix $\sigma$  with corresponding locally uniformly strictly elliptic diffusion matrix and $\sigma_{ij}\in H^{1,p}_{loc}(\R^d)$ where $p>d$ is the same as for $\mathbf{G}$, relaxing condition (\ref{con1}) to the natural one, i.e. $r=1$ 
(see also Remark \ref{sectorial}(i)) but no non-explosion condition related to the local conditions of \cite[Theorem 1.3]{Zh11} is given. Only a global $L_p$-integrability condition in space is given in \cite[Theorem 1.2]{Zh11}, which again does not allow for linear growth. Note that \cite[Theorem 1.3]{Zh11} also holds under the conditions of Remark \ref{Zhanggeneral}(ii) and that we can handle this case but disregard it for the reasons mentioned in Remark \ref{Zhanggeneral}. Conditions for non-explosion, as well as irreducibility and strong Feller properties that we will discuss below, were given in \cite[Theorems 1.2 and 1.7]{ZhXi16}. The conditions are formulated for the general time-dependent case but
seem to be
not optimal when restricted to the time-homogeneous case.
For instance, growth conditions are formulated separately, first for locally bounded drift coefficient, then for locally unbounded drift coefficient $\mathbf{G}$, whereas our results show that this is unnecessary in the time-homogeneous case. Moreover the growth conditions for possibly locally unbounded drift coefficient in \cite[Theorem 1.7]{ZhXi16} require local integrability of order $p>2d+2$, global ellipticity, Lipschitz continuity outside a ball, i.e. outside a neighborhood of the singularity, and the norm of the drift needs to satisfy a linear growth condition, but as we will see below, we do not need any of these conditions. On the other hand,  weak differentiability of the solution as well as corresponding moment inequalities for the weak gradient of the solution are presented in \cite[Theorems 1.2 and 1.7]{ZhXi16}, which we both do not consider.
For more discussion on the results of \cite{ZhXi16} when restricted to the time-homogeneous case see below and Remark \ref{compZhZi} below.\\
The strong uniqueness results of \cite{KR}  were also recovered in \cite{FF} using a different method of proof which allowed to obtain additional insight on the solution. For instance, the $\alpha$-H\"older continuity of the solution  for arbitrary $\alpha\in (0,1)$  and the differentiability in $L^2(\Omega\times[0,T],\R^d)$ (here $\Omega$ is the path space) with respect to the initial condition. For the latter result see \cite{FF2}. \\
Now we will describe our results and compare them with previous works. Let $p>d$ be arbitrary but fixed. For $A=(a_{ij})_{1\le i,j\le d}$ and $\mathbf{G}$ satisfying our basic assumptions, i.e. $a_{ij} \in H^{1,p}_{loc}(\R^d)$, $1\le i,j\le d$, such that $A$ is locally uniformly strictly elliptic (cf. (\ref{use}) below), and $\mathbf{G}=(g_1, \dots, g_d) \in L^p_{loc}(\R^d,\R^d)$, we construct a weak solution  to (\ref{Ito1}) up to $\zeta$ using elliptic and parabolic regularity results and generalized Dirichlet form techniques. This is achieved in Theorem \ref{weakexistence} and Remark \ref{weakexistence2} and $\sigma$ can be chosen as in Theorem \ref{weakexistence}(i) or (ii). The weak solution originates by construction from a Hunt process with continuous sample paths on the one-point compactification of $\mathbb{R}^d$ (see Theorem \ref{existhunt}).
Moreover, the combination of bilinear form and PDE techniques allows us just under the above assumptions to obtain  the $L^{[1,\infty]}(\R^d,m)$-strong Feller property of the transition function of the weak solution, and the $L^{[q,\infty]}(\R^d,m)$-strong Feller property, $q=\frac{dp}{d+p}\in (d/2,p/2)$, of its resolvent, which both include the classical strong Feller property (cf. Theorem \ref{1-3reg}, Proposition \ref{regular2} and Lemma \ref{fini1}). 
Additionally, the Hunt process is irreducible in the probabilistic sense and the semigroup of the underlying generalized Dirichlet form is strictly irreducible (cf. Corollary \ref{irreduci}). Using the facts that we obtained from the construction method, the solution can be shown to be non-explosive, if there exists a constant  $M> 0$ and some $N_0\in \N$, such that 
\begin{eqnarray}\label{conservative2}
-\frac{\langle A(x)x, x \rangle}{ \left \| x \right \|^2 +1}+ \frac12\mathrm{trace}A(x)+ \big \langle \mathbf{G}(x), x \big \rangle \leq M\left ( \left \| x \right \|^2+1\right )\left (  {\rm ln}(\left \| x \right \|^2+1)+1\right )
\end{eqnarray}
for a.e. $x\in \R^d\setminus B_{N_0}$. This is proven in Theorem \ref{nonextheo} using supermartingales.
The conditions allow for linear growth, for locally unbounded drifts and an interplay between diffusion and drift coefficients such that (even outside $B_{N_0}$) superlinear growth of $\mathbf{G}$ is possible if $\langle\mathbf{G}(x),x\rangle$ is non-positive and superlinear growth of $\mathbf{G}$ and $A$ is possible if diffusion and drift coefficients compensate each other. \\
Once we have constructed a weak solution up to its explosion time, we concentrate on non-explosion conditions for it.
Under a non-explosion condition or more generally under the assumption of non-explosion of the weak solution, it is a global weak solution, whose corrseponding SDE is strongly unique up to $\infty$ by \cite{Zh11,KR}. This observation was first employed in \cite{RoShTr} and leads to new non-explosion results for the pathwise unique local solutions in \cite{Zh11,KR}. As application of this observation, we obtain strong uniqueness of \eqref{Ito1} up to $\infty$ just under the additional non-explosion condition \eqref{conservative2} (or more generally under any condition that guarantees the non-explosion of our weak solution, see our main Theorem \ref{unique2}). But we obtain more than this.
Namely, the pathwise unique solution $(X_t)_{t\ge 0}$ in Theorem \ref{unique2} is not only strong but satisfies all previously derived properties. Our strong Feller property results generalize the ones obtained in \cite[Propositions 3.2 and 3.8]{AKR} and \cite[Theorem 2.8]{BGS13} and improve the results related to the time-homogeneous case in \cite{ZhXi16}. There  $\M$ should be non-explosive to obtain merely the classical strong Feller property (cf. also Remark \ref{hkeandcomp}(iii)). Also, the irreducibility (in the probabilistic sense) here is just obtained under the mentioned basic assumptions on $A$ and $\mathbf{G}$, whereas the assumptions to obtain irreducibility in \cite{ZhXi16} appear to be quite involved.
Additionally, our method provides implicitly a candidate for an invariant measure as well as for a stationary distribution and we derive several explicit sufficient conditions for recurrence and ergodicity, including existence and uniqueness of invariant probability measures (see Section \ref{subsecrec}). Moreover, we derive moment inequalities for the solution (see Theorem \ref{supestimate} which complements \cite[Proposition 14]{FF}  and \cite[Lemma 3.2 of Section 2.3, Theorem 4.1 of Section 2.4]{Mao}). All these are advantages over the methods that were previously employed in \cite{GyMa}, \cite{KR}, \cite{Zh11}, \cite{ZhXi16}, \cite{FF}, and we are able to generalize and even improve many of the classical results in the time-homogeneous case for locally bounded coefficients (see \cite{Bha} and the standard reference \cite{Pi}) to the case of locally unbounded coefficients (see for instance Remark \ref{conspoint} and Theorem \ref{recurrencepinsky}). Here we should emphasize that in contrast to previous literature where Krylov's estimate is used as a main technique, the only time where we indirectly use Krylov's  estimate is when we apply the local pathwise uniqueness result of \cite[Theorem 1.3]{Zh11} (see our main Theorem \ref{unique2}). For all other properties that we prove, we use the method that we described above.\\
The paper is organized as follows. In Section \ref{2} we introduce the notations that are used throughout the text. 
In Section \ref{3} we develop the analysis to define rigorously the infinitesimal generator $L$ that a solution to (\ref{Ito1}) should have under our assumptions. We first use a result of \cite{St99}, i.e. that a strongly continuous semigroup of contractions and a generalized Dirichlet form on some $L^2$-space associated to an extension of $L$ as in (\ref{representation}) below, can be constructed. For this construction, one needs some weak divergence free property of the anti-symmetric part of the drift. Theorem \ref{1-3} (from \cite[Theorem 2.4.1]{BKRS}) implies that one can obtain this property with respect to a measure $m=\rho\,dx$, where $\rho$ is some strictly positive continuous function, 
under  our basic assumptions on $A=(a_{ij})_{1\le i,j\le d}$ and $\mathbf{G}$. Typically, the density $\rho$ is not explicit and not a probability density but has the regularity $\rho\in H^{1,p}_{loc}(\R^d)$. In the whole article we just use its existence as a tool and do not need to know its explicit form, except in Section \ref{explform} and parts of Section \ref{subsecrec}, see Theorem \ref{rhorecurrence} there. Subsequently, we use the elliptic regularity result Proposition \ref{p;gdegep2.3} (from \cite[Theorem 5.1]{BGS13}) and our parabolic regularity result Theorem \ref{1-3reg} which we derive from results in \cite{ArSe} to obtain the regularity as stated in Proposition \ref{regular2} and ${(\textbf{H2})^{\prime}}$. Following the basic idea from \cite{AKR}, we may then use the  Dirichlet form method to obtain the existence of a Hunt process $\M$ with transition function $(P_t)_{t>0}$ associated to the mentioned extension of $L$, with continuous sample paths on the one point compactification $\R^d_{\Delta}$ of $\R^d$ with $\Delta$ (see Theorem \ref{existhunt}). To obtain its existence we crucially make use of the existence of such a Hunt process for merely almost every starting point which we obtain from \cite{Tr2, Tr5}. Once $\M$ is constructed, we can use standard methods from \cite{IW89} (see Theorem \ref{weakexistence} and Remark \ref{weakexistence2}) to arrive at the identification of a weak solution to (\ref{Ito1}) up to $\zeta$. In Section \ref{4}, we first develop non-explosion criteria for $\M$. The first such statement is obtained in Theorem \ref{nonextheo} by some probabilistic technique using supermartingales which dates back at least to \cite[10.2]{StrVar} (see also \cite[Chapter 5.3]{d} and \cite[Section 6.7]{Pi}). The statement is basically that there exists a strictly positive $C^2$-function on $\R^d$ with nice growth properties at infinity such that $Mu-Lu\ge 0$ a.e. for some constant $M> 0$. In the case of an analytic proof it seems to go back to \cite{has} (see \cite[Theorem 2.4]{da}).  Using  the strong Feller property, the non-explosion conditions of Theorem \ref{nonextheo} can also be recovered from \cite{St99},  as explained in Remark \ref{consstannat}. In Section \ref{subsecrec} we discuss recurrence and other ergodic properties involving and not involving the density $\rho$. As previously mentioned, $\rho$ is usually not explicit but can be assumed to be explicit (if needed) as explained in Remark \ref{rhoexpl}. Using a Harnack inequality from \cite{ArSe}, we then show that the semigroup of the underlying generalized Dirichlet form is strictly irreducible in Corollary \ref{irreduci}(i). Consequently, we can apply explicit volume growth conditions from \cite{GT2} to obtain recurrence (cf. Theorem \ref{rhorecurrence}). In the general case, when $\rho$ is not explicitly known, we can also derive explicit recurrence criteria. Theorem \ref{recurrencepinsky}, that is applicable just under our basic assumptions on $A=(a_{ij})_{1\le i,j\le d}$ and $\mathbf{G}$, generalizes \cite[Chapter 6, Theorem 1.2]{Pi} which assumes the drift to be locally bounded. Moreover the proof of Theorem \ref{recurrencepinsky} is different from the one of \cite[Chapter 6, Theorem 1.2]{Pi} and uses basic results of \cite{GT2}, as well as strict irreducibility from Corollary \ref{irreduci}(i) and Proposition \ref{rectrans}. In Proposition \ref{indeprho}, we derive again just under our basic assumptions on $A$ and $\mathbf{G}$ an explicit criterion for ergodicity of $\M$, including the existence of a unique invariant probability measure.
Section \ref{5} is devoted to the mentioned application to pathwise uniqueness results. \\
\section{Notations}\label{2}
Throughout, we consider the Euclidean space $\R^d$, $d\ge 2$, equipped with the Euclidean inner product $\langle\cdot,\cdot \rangle$, the Euclidean norm $\|\cdot\|$ and the Borel  $\sigma$-algebra $\mathcal{B}(\R^d)$. We write $|\cdot|$ for the absolute value in $\R$. For $r\in \R$, $r>0$ and $x\in \R^d$, let $B_r(x):=\{y\in \R^d\,:\, \|x-y\|<r\}$ and denote its closure by $\overline{B}_r(x)$ (similarly for a subset $A\subset \R^d$, let $\overline{A}$ denote its closure). If $x=0$, we simply write $B_r$ and $\overline{B}_r$. We call a subset $B\subset \R^d$, for which $B=B_r(x)$ for some  $r>0$ and $x\in \R^d$, a ball. Let  $R_{x}(r)$ denote the open cube in $\R^d$ with edge length $r>0$ and center $x \in \R^d$ and denote its closure by $\overline{R}_{x}(r)$. The minimum of two values $a$ and $b$ is denoted by $a\wedge b:=\min(a,b)$ and the maximum is  denoted by $a\vee b:=\max(a,b)$. For two sets $A,B$, we define $A+B:=\{a+b\,:\, a\in A \text{ and } b\in B\}$.\\
The set of all $\mathcal{B}(\R^d)$-measurable $f : \R^d \rightarrow \R$ which are bounded, or nonnegative are denoted by $\mathcal{B}_b(\R^d)$, $\mathcal{B}^{+}(\R^d)$ respectively.  Let $U \subset \R^d$, be an open set. The usual $L^q$-spaces  $L^q(U, \mu)$, $q \in[1,\infty]$ of Borel measurable or classes of Borel measurable functions (depending on the context) are equipped with $L^{q}$-norm $\| \cdot \|_{L^q(U, \mu)}$ with respect to the  measure $\mu$ on $U$  and $L^{q}_{loc}(\R^d,\mu) := \{ f \,:\, f \cdot 1_{U} \in L^q(\R^d, \mu),\,\forall U \subset \R^d, U \text{ relatively compact open} \}$, where $1_A$ denotes the indicator function of a set $A \subset \R^d$. Likewise $L^q_{loc}(\R^d,\R^d, \mu)$ denotes the set of all locally $q$-fold integrable vector fields, i.e. 
$$
L^q_{loc}(\R^d,\R^d, \mu):=\{ \mathbf{G}=(g_1,\ldots,g_d):\R^d\to\R^d\,:\, g_i\in L^q_{loc}(\R^d, \mu), 1\le i\le d\}.
$$
The Lebesgue measure on $\R^d$ is denoted by $dx$ and we write $L^q(\R^d)$, $L^q_{loc}(\R^d)$, $L^q_{loc}(\R^d,\R^d)$ for $L^q(\R^d, dx)$, $L^q_{loc}(\R^d,dx)$,  $L^q_{loc}(\R^d, \R^d, dx)$ respectively. In order to avoid notational complications, we assume that locally integrable functions are whenever necessary pointwisely given (not for instance equivalence classes) and hence measurable. Moreover, 
whenever a function $f$ possesses a continuous version, we will assume it is given by it. However, if in a situation, it should be necessary or important to distinguish between classes and pointwisely given functions, we will mention it. 
If $\mathcal{A}$ is a set of measurable functions $f : \R^d \to \R$, we define $\mathcal{A}_0 : = \{f \in \mathcal{A} \ : $ supp($f$) : = supp($|f| dx$) is compact in $\R^d \}$ and $\mathcal{A}_b$ : = $\mathcal{A} \cap L^{\infty}(\R^d)$. As usual, we also denote the set of continuous functions on $\R^d$, the set of continuous bounded functions on $\R^d$, the set of compactly supported continuous functions in $\R^d$ by $C(\R^d)$, $C_b(\R^d)$, $C_0(\R^d)$, respectively. Two Borel measurable functions $f$ and $g$ are called $\mu$-versions of each other, if $f=g$ $\mu$-a.e.\\
Let $\nabla f : = ( \partial_{1} f, \dots , \partial_{d} f )$, where $\partial_j f$ is the $j$-th weak partial derivative of $f$ on $\R^d$ and $\partial_{ij} f := \partial_{i}(\partial_{j} f) $, $i,j=1, \dots, d$. The Sobolev space $H^{1,q}(U)$, $q \in [1,\infty]$ is defined to be the set of all functions $f \in L^{q}(U)$ for which $\partial_{j} f \in L^{q}(U)$, $j=1, \dots, d$, and 
$H^{1,q}_{loc}(U) : =  \{ f  \,:\;  f \cdot \varphi \in H^{1,q}(U),\,\forall \varphi \in  C_0^{\infty}(U)\}$. 
Here $C_0^{k}(U)$, $k\in \N \cup\{\infty\}$, denotes the set of all $k$-fold continuously differentiable functions with compact support in $U$. 
Let $V$ be a bounded open set in  $\R^d$ (typically a ball $B$) and $f: \overline{V}\rightarrow \R$ be a function. For $\beta \in (0,1)$ define
\[
\mathrm{h\ddot{o}l}_\beta(f,\overline{V}) := \sup\left\{\frac{|f(x)-f(y)|}{\|x-y\|^\beta}: x,y \in \overline{V}, x\not=y\right\} \in [0,\infty],
\]
and the H\"{o}lder continuous functions of order $\beta\in (0,1)$ on $\overline{V}$ by
\[
C^{0,\beta}(\overline{V}) := \{f \in C(\overline{V}): \mathrm{h\ddot{o}l}_\beta (f,\overline{V}) < \infty \}.
\]
Then $C^{0,\beta}(\overline{V})$ is a Banach space with norm
\[ 
\|f\|_{C^{0,\beta}(\overline{V})} := \sup_{x \in \overline{V}} |f(x)| + \mathrm{h\ddot{o}l}_\beta (f,\overline{V}).
\]
The space of all locally H\"{o}lder continuous functions of order $\beta\in (0,1)$ on $\R^d$ is defined by
$$
C^{0,\beta}_{loc}(\R^d):=\{f\,:\, f\in C^{0,\beta}_{loc}(\overline{B}) \text{ for any ball } B\}.
$$	
Let $Q$ be a bounded open set in $\R^d \times \R$ and $g: \overline{Q} \rightarrow \R$ be a function. For $\delta \in (0,1)$ denote
\[
\mathrm{ph\ddot{o}l}_\delta(g,\overline{Q}) := \sup\left\{\frac{|g(x,t)-g(y,s)|}{\left(\|x-y \|+\sqrt{|t-s|}\right)^{\delta}}: \;(x,t), (y,s) \in \overline{Q}, \;(x,t) \not=(y,s)\right\} \in [0,\infty],
\]
and the parabolic H\"{o}lder continuous functions of order $\delta\in (0,1)$ on $\overline{Q}$ by
\[
C^{\delta; \frac{\delta}{2}}(\overline{Q}) := \{g \in C(\overline{Q}): \mathrm{ph\ddot{o}l}_\delta(g,\overline{Q}) < \infty \}.
\]
Then $C^{\delta; \frac{\delta}{2}}(\overline{Q})$ is a Banach space with norm
\[ 
\|g\|_{C^{\delta; \frac{\delta}{2}}(\overline{Q})} := \sup_{(x,t) \in \overline{Q}} |g(x,t)| + \mathrm{ph\ddot{o}l}_\delta(g,\overline{Q}).
\]
$g$ is called locally parabolic H\"older continuous, if for any bounded and open set $Q$, there exists $\delta=\delta(Q)$, such that $g\in C^{\delta; \frac{\delta}{2}}(\overline{Q})$. Here $\delta$ may be different for different $Q$. In particular, if $t\in \R$ is fixed,  we then say that $g(\cdot, t)$ is locally H\"{o}lder continuous with possibly changing H\"older exponents.\\
Given a function $f:\R^d\to \R$ and a Hunt process with cemetary $\Delta$, we follow the convention that $f$ is extended to $\R^d\cup \{\Delta\}$ by setting $f(\Delta)=0$.
\section{Weak solutions via generalized Dirichlet forms and elliptic and parabolic regularity}\label{3}
Let $\phi\in H^{1,2}_{loc}(\R^d)$ be such that the measure $m:=\rho\,dx$, $\rho:=\phi^2$, 
has full support on $\R^d$. Let $H^{1,2}_0(\R^d,m)$ be the closure of $C^{\infty}_0(\R^d)$ in $L^2(\R^d,m)$ with respect to the norm $(\int_{\R^d}(\|\nabla f\|^2+f^2)\,dm)^{1/2}$ and $H^{1,2}_{loc}(\R^d,m):=\{f\,:\, f\cdot \varphi\in H^{1,2}_0(\R^d,m),\ \forall \varphi \in C^{\infty}_0(\R^d)\}$. 
Let $A=(a_{ij})_{1\le i,j\le d}$ with $a_{ij}\in H^{1,2}_{loc}(\R^d,m)$ be a symmetric matrix of functions
and locally uniformly strictly elliptic, i.e. for every (open) ball $B \subset \R^d$ there exist real numbers $\lambda_B, \Lambda_B>0$, such that 
\begin{eqnarray}\label{use}
\lambda_B \left \| \xi \right \|^2 \leq \big \langle A(x) \xi, \xi \big \rangle \ \leq \Lambda_B \left \| \xi \right \|^2  \text{ for all }\; \xi \in \R^d, \; x\in B.
\end{eqnarray}
Let $\mathbf{G}=(g_1,\ldots,g_d)\in L^2_{loc}(\R^d,\R^d,m)$ be such that with
\begin{eqnarray}\label{operator}
Lf:=\frac{1}{2}\sum_{i,j=1}^{d}a_{ij}\partial_i\partial_j f+\sum_{i=1}^{d}g_i\partial_i f,\ f\in C^{\infty}_0(\R^d),
\end{eqnarray}
it holds
\begin{eqnarray}\label{41}
\int_{\R^d} Lf\,dm=0, \ \ \ \ \forall f \in C^{\infty}_0(\R^d). 
\end{eqnarray}
Then it is shown in \cite[Theorem 1.5]{St99} that there exists a closed extension $(L_1, D(L_1))$ on $L^1(\R^d,m)$ of $(L,C_0^{\infty}(\R^d))$ that generates a sub-Markovian $C_0$-semigroup of contractions $(T_t)_{t> 0}$. Restricting $(T_t)_{t> 0}$ to $L^1(\R^d,m)_b$, it is well-known that $(T_t)_{t> 0}$ can be extended to a sub-Markovian $C_0$-semigroup of contractions on each $L^r(\R^d,m)$, $r\in [1,\infty)$. Denote by $(L_r, D(L_r))$ the corresponding closed generator with graph norm
$$
\|f\|_{D(L_r)}:=\|f\|_{L^r(\R^d,m)}+ \|L_r f\|_{L^r(\R^d,m)},
$$
and by $(G_{\alpha})_{\alpha>0}$ the corresponding resolvent. For $(T_t)_{t>0}$ and $(G_{\alpha})_{\alpha>0}$ we do not explicitly denote in the notation on which $L^r(\R^d,m)$-space they act. We assume that this is clear from the context. Moreover, $(T_t)_{t>0}$ and $(G_{\alpha})_{\alpha>0}$ can be uniquely defined on $L^{\infty}(\R^d,m)$, but are no longer strongly continuous there.\\
Writing
\begin{eqnarray}\label{representation}
Lf &=& \frac12\sum_{i,j=1}^da_{ij}\partial_{i}\partial_jf+ \sum_{i=1}^d\beta^{\rho,A}_i \partial_i f+ \sum_{i=1}^d(g_i-\beta^{\rho,A}_i) \partial_i f
\end{eqnarray}
with
\begin{eqnarray}\label{beta}
\beta^{\rho,A}_i & := & \frac12 \sum_{j=1}^d \left (\partial_j a_{ij}+a_{ij}\frac{\partial_j \rho}{\rho} \right ), \ 1\le i\le d,\  \beta^{\rho,A}:=(\beta^{\rho,A}_1,\ldots,\beta^{\rho,A}_d)
\end{eqnarray}
we observe that (\ref{41}) is equivalent to
\begin{eqnarray}\label{41b}
\int_{\R^d} \langle \mathbf{G}-\beta^{\rho,A},\nabla f\rangle\,dm=0, \ \ \ \ \forall f \in C^{\infty}_0(\R^d),
\end{eqnarray}
hence
\begin{eqnarray*}
\int_{\R^d} \widehat{L}f\,dm=0, \ \ \ \ \forall f \in C^{\infty}_0(\R^d),
\end{eqnarray*}
where
\begin{eqnarray}\label{41c}
\widehat{L}f &=& \frac12\sum_{i,j=1}^da_{ij}\partial_{i}\partial_jf+ \sum_{i=1}^d\beta^{\rho,A}_i \partial_i f- \sum_{i=1}^d(g_i-\beta^{\rho,A}_i) \partial_i f.
\end{eqnarray}
Noting that $\widehat{g}_i:=2\beta^{\rho,A}_i-g_i\in L^{2}_{loc}(\R^d,m)$, we see that $L$ and $\widehat{L}$ have the same structural properties, i.e. they are given as the sum of a symmetric second order elliptic differential operator and a divergence free first order perturbation with same integrability condition with respect to the measure $m$. Therefore all what will be derived below for $L$ will hold analogously for $\widehat{L}$. Denote the operators corresponding to $\widehat{L}$ (again defined through \cite[Theorem 1.5]{St99}) by $(\widehat{L}_r, D(\widehat{L}_r))$ for the co-generator on $L^r(\R^d,m)$, $r\in [1,\infty)$, $(\widehat{T}_t)_{t>0}$ for the co-semigroup, $(\widehat{G}_{\alpha})_{\alpha>0}$ for the co-resolvent. By  \cite[Section 3]{St99}, we obtain a corresponding bilinear form with domain 
$D(L_2) \times L^2(\R^d,m) \cup L^2(\R^d,m) \times D(\widehat{L}_2)$ by
\[ 
{\mathcal{E}}(f,g):= \left\{ \begin{array}{r@{\quad\quad}l}
  -\int_{\R^d} L_2 f \cdot g \,dm & \mbox{ for}\ f\in D(L_2), \ g\in L^2(\R^d,m),  \\ 
            -\int_{\R^d} f\cdot \widehat{L}_2 g \,dm  & \mbox{ for}\ f\in L^2(\R^d,m), \ g\in D(\widehat{L}_2). \end{array} \right .
\] 
$\mathcal{E}$ is called the {\it generalized Dirichlet form associated with} $(L_2,D(L_2))$. Using integration by parts, it is easy to see that
\begin{eqnarray}\label{gdf}
{\cal E}(f,g)&=&\frac{1}{2}\int_{\mathbb{R}^d}\langle A\nabla f,\nabla g\rangle \, dm-\int_{\mathbb{R}^d}\langle \mathbf{G}-\beta^{\rho,A},\nabla f\rangle g\, dm,\ \ \ f,g\in C_0^{\infty}(\mathbb{R}^d).
\end{eqnarray}
The following lemma, see \cite[Remark 1.7(iii)]{St99}, will be used later:
\begin{lem}\label{algebra}
Let $u \in D(L_1)_{b}$. Then $u^2 \in D(L_1)_b$ and
\[
L_1 u^2 = \langle A\nabla u,\nabla u \rangle + 2 u L_1 u.
\]
\end{lem}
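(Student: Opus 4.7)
The plan is to verify the identity first for $u\in C_0^\infty(\R^d)$ and then to extend it to $u\in D(L_1)_b$ by a graph-norm approximation, crucially exploiting the divergence-free assumption (\ref{41b}) to obtain a Cauchy estimate for the quadratic term $\langle A\nabla u,\nabla u\rangle$.

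For $u\in C_0^\infty(\R^d)$, the Leibniz rule applied to (\ref{operator}) immediately yields
\[
Lu^2 \;=\; \tfrac{1}{2}\sum_{i,j}a_{ij}\bigl(2u\,\partial_i\partial_j u + 2\,\partial_i u\,\partial_j u\bigr) + \sum_i g_i\cdot 2u\,\partial_i u \;=\; 2uLu+\langle A\nabla u,\nabla u\rangle.
\]
Polarizing this computation for $u\cdot v$ with $u,v\in C_0^\infty(\R^d)$ and combining with (\ref{41}) applied to $f=uv$ gives the bilinear energy identity
\[
\int\langle A\nabla u,\nabla v\rangle\,dm \;=\; -\int\bigl(uLv+vLu\bigr)\,dm.
\]

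Now fix $u\in D(L_1)_b$. Since $C_0^\infty(\R^d)$ is a core for $(L_1,D(L_1))$ by construction, there exist $u_n\in C_0^\infty(\R^d)$ with $u_n\to u$ and $Lu_n\to L_1u$ in $L^1(\R^d,m)$; via a standard Stannat-type smoothing (first applying the $L^1$-resolvent $nG_n$, which is $L^\infty$-contractive by sub-Markovianity, then cutting off and mollifying, cf.\ \cite[Remark 1.7(iii)]{St99}) one may assume in addition $\|u_n\|_\infty\le\|u\|_\infty$ and $u_n\to u$ $m$-a.e. Applying the bilinear energy identity to $u_n-u_m$ yields
\[
\int\langle A\nabla(u_n-u_m),\nabla(u_n-u_m)\rangle\,dm \;=\; -2\int(u_n-u_m)(Lu_n-Lu_m)\,dm,
\]
whose right-hand side tends to $0$ by dominated convergence (since $|u_n-u_m|\le 2\|u\|_\infty$, $u_n-u_m\to 0$ $m$-a.e., and $Lu_n$ is Cauchy in $L^1$). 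Local uniform ellipticity (\ref{use}) then forces $(\nabla u_n)_n$ to be Cauchy in $L^2_{loc}(\R^d,m)$ with limit $\nabla u$; so $\langle A\nabla u,\nabla u\rangle\in L^1(\R^d,m)$ and, via the form Cauchy--Schwarz inequality applied to the decomposition $\langle A\nabla u_n,\nabla u_n\rangle - \langle A\nabla u,\nabla u\rangle = \langle A\nabla u_n,\nabla(u_n-u)\rangle + \langle A\nabla(u_n-u),\nabla u\rangle$, one deduces $\langle A\nabla u_n,\nabla u_n\rangle\to \langle A\nabla u,\nabla u\rangle$ in $L^1(\R^d,m)$. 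The remaining $L^1$-convergences $u_n^2\to u^2$ and $2u_nLu_n\to 2uL_1u$ follow from the uniform $L^\infty$-bound, $m$-a.e.\ convergence, and dominated convergence. Closedness of $(L_1,D(L_1))$ then yields $u^2\in D(L_1)_b$ together with $L_1u^2=\langle A\nabla u,\nabla u\rangle+2uL_1u$.

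The main obstacle will be the construction of the approximants $u_n\in C_0^\infty(\R^d)$ enjoying \emph{simultaneously} graph-norm convergence $u_n\to u$ in $D(L_1)$ and the uniform pointwise bound $\|u_n\|_\infty\le\|u\|_\infty$: naively composing graph-norm approximants with a smooth truncation of the identity introduces unwanted second-order correction terms of the form $\tfrac12\phi''(\cdot)\langle A\nabla\cdot,\nabla\cdot\rangle$ in $L$ that do not vanish in the limit. The workaround (resolvent-first, cut-off-second) uses that $nG_n$ is both $L^\infty$-contractive (by sub-Markovianity of $(T_t)_{t>0}$) and graph-norm convergent to the identity on $D(L_1)$, which is precisely the structure provided by \cite[Theorem 1.5]{St99}.
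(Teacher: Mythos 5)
The paper does not prove Lemma~\ref{algebra} at all; it simply cites \cite[Remark 1.7(iii)]{St99}. Your reconstruction has the right global architecture (Leibniz on $C_0^\infty$, polarization plus the weak-divergence-free condition~(\ref{41}) to obtain the bilinear energy identity, a Cauchy estimate for $\nabla u_n$, then closedness of $(L_1,D(L_1))$), and the individual convergence arguments in the last paragraph before the discussion of obstacles are all correct once the approximants exist.

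The gap is exactly where you locate it, and the proposed workaround does not close it. After applying $nG_n$ you obtain $w_n:=nG_n u\in D(L_1)_b$ with $\|w_n\|_\infty\le\|u\|_\infty$ and $w_n\to u$ in the graph norm, but $w_n$ is not in $C_0^\infty(\R^d)$, and the step "cutting off and mollifying" cannot be carried out while preserving graph-norm convergence. Multiplying $w_n$ by a cut-off $\chi\in C_0^\infty$ requires a Leibniz-type formula $L_1(\chi w_n)=\chi L_1 w_n + w_n L\chi+\langle A\nabla\chi,\nabla w_n\rangle$, which is precisely a variant of the statement you are trying to prove, so this is circular. Mollification is no better: since $L$ has genuinely variable coefficients ($a_{ij}\in H^{1,p}_{loc}$, $g_i\in L^p_{loc}$), it does not commute with convolution, and there is no a priori reason that $w_n*\phi_\varepsilon$ lies in $D(L_1)$ or that $L(w_n*\phi_\varepsilon)\to L_1 w_n$ in $L^1(m)$. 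You cite Stannat's Remark~1.7(iii) for the construction, but that is the very reference the paper attributes the entire lemma to, so as a self-contained proof this leaves the essential technical step unjustified.

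A more promising route to the missing a priori bound, closer in spirit to Dirichlet-form techniques, would be to start from arbitrary $v_n\in C_0^\infty$ with $v_n\to u$ in $D(L_1)$ and to compose with an odd normal contraction $\phi_\delta$ satisfying $\phi_\delta(t)=t$ for $|t|\le\|u\|_\infty$, $|\phi_\delta|\le\|u\|_\infty+\delta$, $0\le\phi_\delta'\le 1$, and $\phi_\delta\phi_\delta''\le 0$; the last sign condition makes the unwanted $\frac12\phi_\delta''(v_n)\langle A\nabla v_n,\nabla v_n\rangle$ term have the helpful sign inside the energy identity and yields $\sup_n\int\langle A\nabla\phi_\delta(v_n),\nabla\phi_\delta(v_n)\rangle\,dm<\infty$ even without a uniform bound on $\|v_n\|_\infty$. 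But turning this boundedness into the strong $L^1(m)$-convergence of $\langle A\nabla u_n,\nabla u_n\rangle$ that your closedness argument needs requires further work (for instance, a Cauchy-type estimate exploiting the sign condition again for differences), and is not a step you can omit. So the present proposal is a plausible sketch of the shape of the argument, but it is not a complete proof.
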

We are going to restrict our previous assumptions to the ones of the following theorem. The theorem itself  is an immediate consequence of an important result \cite[Theorem 2.4.1]{BKRS} (see also \cite[Theorem 1]{BRS} for the original result), which itself is derived by using elliptic regularity results from \cite{T73} in an essential way.
\begin{theo}\label{1-3}
Let $p>d$ be arbitrary but fixed. Let $A:=(a_{ij})_{1\leq i,j \leq d}$ be a symmetric $d \times d$ matrix of functions  $a_{ij} \in H^{1,p}_{loc}(\R^d)$ satisfying (\ref{use}). 
Let $\mathbf{G}=(g_1, \dots, g_d) \in L^p_{loc}(\R^d,\R^d)$. Then there exists $\rho \in C^{0,1-d/p}_{loc}(\R^d) \cap H^{1,p}_{loc}(\R^d)$ with $\rho(x)>0$ for all $x\in \R^d$ and such that 
\[
 \int_{\R^d} \langle  \mathbf{G}-\beta^{\rho,A}, \nabla \varphi \rangle \rho\,dx=0, \quad  \forall  \varphi \in C_0^{\infty}(\R^d),
\]
with
$$
\beta^{\rho,A}\in L_{loc}^p (\R^d, \R^d).
$$
In particular, setting  
$$
\mathbf{B}=(b_1,\ldots, b_d):=\mathbf{G}-\beta^{\rho,A},
$$ 
we have obtained a representation of an arbitrary $\mathbf{G}\in L^p_{loc}(\R^d,\R^d)$ as the sum of the logarithmic derivative $\beta^{\rho,A}$ associated to $A$ and $\rho$ and a $\rho dx$-divergence free vector field $\mathbf{B}\in L^p_{loc}(\R^d,\R^d)$, namely
\[
\mathbf{G}=\beta^{\rho,A}+\mathbf{B}.
\]
\end{theo}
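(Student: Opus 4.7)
The plan is to recognize the displayed identity as the statement that $\rho\,dx$ is an infinitesimally invariant measure for $L$, and then to invoke \cite[Theorem 2.4.1]{BKRS} to produce such a density with the required regularity.

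First I would observe that, under the provisional hypothesis that $\rho$ is continuous, strictly positive and of class $H^{1,p}_{loc}(\R^d)$, the identity
\[
\int_{\R^d} \langle \mathbf{G}-\beta^{A,\rho}, \nabla\varphi\rangle\, \rho\,dx = 0, \quad \forall \varphi\in C_0^\infty(\R^d),
\]
is equivalent to the distributional equation
\[
\int_{\R^d} L\varphi\,\rho\,dx = 0, \quad \forall \varphi\in C_0^\infty(\R^d),
\]
i.e. $L^{*}(\rho\,dx)=0$ in the sense of distributions. This equivalence is just the integration by parts already carried out in (\ref{41})--(\ref{41b}); indeed, by the product rule one has $\beta^{A,\rho}_i\,\rho = \tfrac12\sum_j\partial_j(a_{ij}\rho)$, so that one application of integration by parts on the second-order term of $L\varphi$ converts $\int L\varphi\,\rho\,dx$ into $\int\langle\mathbf{G}-\beta^{A,\rho},\nabla\varphi\rangle\rho\,dx$. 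Hence the task reduces to producing a strictly positive density $\rho\in H^{1,p}_{loc}(\R^d)$ with $L^{*}(\rho\,dx)=0$.

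Next I would apply \cite[Theorem 2.4.1]{BKRS}. Under our hypotheses (namely $a_{ij}\in H^{1,p}_{loc}(\R^d)$ with $A$ locally uniformly strictly elliptic in the sense of (\ref{use}), $\mathbf{G}\in L^p_{loc}(\R^d,\R^d)$, and $p>d$), that result---which relies essentially on the elliptic regularity of \cite{T73}---guarantees the existence of a nontrivial locally finite Borel measure of the form $\rho\,dx$ with $\rho\in H^{1,p}_{loc}(\R^d)$ solving the distributional identity above. Strict positivity $\rho(x)>0$ for every $x\in\R^d$ is part of the conclusion, obtained via a Harnack-type inequality applied to the nonnegative continuous distributional solution $\rho$ of the adjoint equation.

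Finally, the remaining regularity assertions are routine. The Hölder regularity $\rho\in C^{0,1-d/p}_{loc}(\R^d)$ is immediate from the Morrey embedding $H^{1,p}_{loc}\hookrightarrow C^{0,1-d/p}_{loc}$, which is valid because $p>d$. To verify $\beta^{A,\rho}\in L^p_{loc}(\R^d,\R^d)$ I would simply inspect its defining formula (\ref{beta}): the weak derivatives $\partial_j a_{ij}$ and $\partial_j\rho$ lie in $L^p_{loc}$ by assumption and by the conclusion just obtained, while $a_{ij}$ is continuous (hence locally bounded) by Morrey, and $1/\rho$ is locally bounded because $\rho$ is continuous and strictly positive; each summand of $\beta^{A,\rho}_i$ is therefore the product of an $L^p_{loc}$-function and a locally bounded one. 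The representation $\mathbf{G}=\beta^{A,\rho}+\mathbf{B}$ with $\mathbf{B}\in L^p_{loc}$ then follows by setting $\mathbf{B}:=\mathbf{G}-\beta^{A,\rho}$. The main conceptual content is entirely encapsulated in \cite[Theorem 2.4.1]{BKRS}; what remains is translation through integration by parts and standard Sobolev embedding.
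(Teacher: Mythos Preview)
Your proposal is correct and matches the paper's approach exactly: the paper states that the theorem ``is an immediate consequence of an important result \cite[Theorem 2.4.1]{BKRS}, which itself is derived by using elliptic regularity results from \cite{T73} in an essential way,'' and gives no further proof. Your write-up simply spells out why this is indeed immediate---the equivalence with $L^*(\rho\,dx)=0$ via the integration by parts already recorded in (\ref{41})--(\ref{41b}), the invocation of \cite[Theorem 2.4.1]{BKRS}, and the routine verification of $\beta^{A,\rho}\in L^p_{loc}$ from the regularity of $\rho$ and $a_{ij}$.
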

\begin{rem}\label{Zhanggeneral}
It is possible and not difficult to generalize Theorem \ref{1-3} (and basically everything that follows below) in two directions. We do not do this here because it only leads to technical and notational complications, which are better to be investigated and overcome elsewhere. But all necessary tools can be found in this work. The two  directions are:
\begin{itemize}
\item[(i)]
Theorem  \ref{1-3} also holds with $\R^d$ replaced by any open set $U\subset\R^d$, $H^{1,p}_{loc}(U)$ defined as in Section \ref{2}, and
$$
L^p_{loc}(U):=\{f:f1_{V}\in L^p(U), \forall V \text{relatively compact open with } \overline{V}\subset U \},
$$ 
$$
C^{0,1-d/p}_{loc}(U):=\{f:f\in C^{0,1-d/p}(\overline{V}), \forall V \text{relatively compact open with } \overline{V}\subset U \},
$$ 
by considering an exhaustion with bounded and open sets  $(V_n)_{n\ge1}$ of $U$, i.e.
\[
V_n \subset \overline{V}_n \subset V_{n+1}\; \text{ for all } n \in \N \; \text{ and } \cup_{n=1}^{\infty} V_n=U.
\]
\item[(ii)]
As in \cite[Theorem 2.4.1]{BKRS}, the regularity conditions on  $a_{ij}$, $g_i$, $1\leq i,j \leq d$, can be generalized to $a_{ij}\in H^{1,p_n}(B_n)$ and $g_i\in L^{p_n}(B_n)$ with $p_n>d$. The only interesting case is when $\lim_{n\to \infty } p_n=d$, which leads to a slight but technical improvement of the conditions of Theorem \ref{1-3}. Note that $(B_n)_{n\ge1}$ here is a special exhaustion with bounded and open sets of $\R^d$ but one can generalize this to an arbitrary exhaustion with bounded and open sets  $(V_n)_{n\ge1}$ of $\R^d$. 
\end{itemize}
\end{rem}
From now on unless otherwise stated, we fix one density $\rho$ as in Theorem \ref{1-3} and hence assume that 
$$
A:=(a_{ij})_{1\leq i,j \leq d}, \ \mathbf{G}=(g_1, \ldots, g_d),\ \beta^{\rho,A}=(\beta^{\rho,A}_1,\ldots,\beta^{\rho,A}_d),\ \mathbf{B}=(b_1, \ldots , b_d),
$$
are as in Theorem \ref{1-3} with 
$$
p>d.
$$
This implies all assumptions prior to Theorem \ref{1-3} and we fix from now on the corresponding generalized Dirichlet form $\mathcal{E}$ associated with $(L_2,D(L_2))$ and all the corresponding objects under the assumptions of Theorem \ref{1-3}. As before, we set 
$$
m:=\rho\,dx.
$$
Note, that due to the properties of $\rho$ in Theorem \ref{1-3}, we have that $L_{loc}^p (\R^d)= L_{loc}^p (\R^d, m)$ as well as $L_{loc}^p (\R^d, \R^d)= L_{loc}^p (\R^d, \R^d, m)$. \\ \\
We will use the following result from \cite[Theorem 5.1]{BGS13}, adapted to our needs.
\begin{prop}\label{p;gdegep2.3}
Let $d \ge 2$ and $\mu$ a locally finite (signed) Borel measure on $\R^d$ that is absolutely continuous with respect to Lebesgue measure on $\R^d$. Let $A= (a_{ij})_{1 \le i,j \le d}$ and $p>d$ be as in Theorem \ref{1-3}. Let $h_i, c, f \in L^p_{loc}(\R^d)$ and assume that 
\[
\int_{\R^d} \Big( \sum_{i,j = 1}^{d} \frac{a_{ij}}{2} \partial_{ij} \varphi + \sum_{i=1}^{d} h_i \partial_i \varphi + c \varphi  \Big) \  d\mu = \int_{\R^d}  \varphi  f \, dx, \quad \forall \varphi \in C_0^{\infty} (\R^d),
\]
where $h_i$, $c$ are locally $\mu$-integrable. Then $\mu$ has a density in $H^{1,p}_{loc}(\R^d)$ that is locally H\"older continuous.
\end{prop}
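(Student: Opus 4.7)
The plan is to show that the density $\rho := d\mu/dx \in L^1_{loc}(\R^d)$ provided by absolute continuity and the Radon--Nikodym theorem belongs to $H^{1,p}_{loc}(\R^d)$ and admits a locally H\"older continuous representative. Since $a_{ij} \in H^{1,p}_{loc}(\R^d)$ and $p > d$, Morrey's embedding gives $a_{ij} \in C^{0,1-d/p}_{loc}(\R^d)$ and $\partial_i a_{ij} \in L^p_{loc}(\R^d)$. Using the pointwise identity
\[
\tfrac12 \sum_{i,j} a_{ij}\partial_{ij}\varphi \;=\; \tfrac12 \sum_{i,j} \partial_i(a_{ij}\partial_j\varphi) \;-\; \tfrac12 \sum_{i,j} (\partial_i a_{ij})\partial_j\varphi,
\]
and setting $\widetilde h_j := h_j - \tfrac12 \sum_i \partial_i a_{ij} \in L^p_{loc}(\R^d)$, the hypothesis rewrites as
\[
\int_{\R^d}\rho\, \Big[\tfrac12 \sum_{i,j} \partial_i(a_{ij}\partial_j\varphi) + \sum_j \widetilde h_j \partial_j\varphi + c\varphi\Big]\,dx \;=\; \int_{\R^d}\varphi\, f\, dx, \quad \forall \varphi \in C_0^{\infty}(\R^d).
\]
Equivalently, $\rho$ is a distributional solution of the divergence-form equation
\[
-\tfrac12 \sum_{i,j} \partial_i(a_{ij}\partial_j\rho) + \sum_j \partial_j(\widetilde h_j \rho) - c\rho \;=\; -f.
\]

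From here I would bootstrap regularity in two stages. First, I would upgrade $\rho$ from $L^1_{loc}$ to $L^{\infty}_{loc}$. Since $\rho$ is a priori not regular enough to be inserted into the equation as its own test function, I would regularize by mollification: set $\rho_\varepsilon := \rho * \eta_\varepsilon$, derive the approximate divergence-form equation satisfied by $\rho_\varepsilon$ on compact subsets, and observe that the commutator between $\partial_i$ and convolution acting on $a_{ij}\partial_j\rho_\varepsilon$ vanishes in $L^p_{loc}$ as $\varepsilon \to 0$ thanks to the continuity of the $a_{ij}$ guaranteed by $p > d$. Moser's local boundedness theorem for divergence-form operators with continuous principal part and lower-order data in $L^p_{loc}$ with $p > d$ (or, alternatively, a duality argument based on $L^{p'}$-solvability of the adjoint Dirichlet problem on a slightly larger ball) then yields a uniform $L^{\infty}_{loc}$ bound on $\rho_\varepsilon$, hence $\rho \in L^{\infty}_{loc}(\R^d)$. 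Second, with $\rho \in L^{\infty}_{loc}$ both the right-hand side $c\rho - f$ and the convective flux $\widetilde h_j\rho$ lie in $L^p_{loc}$, and standard Calder\'on--Zygmund / Meyers $L^p$-estimates for divergence-form operators with continuous leading coefficients promote $\rho$ to $H^{1,p}_{loc}(\R^d)$. The local H\"older continuity then follows at once from Morrey's embedding $H^{1,p}_{loc}(\R^d) \hookrightarrow C^{0,1-d/p}_{loc}(\R^d)$.

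The hard part is the first bootstrap: passing from mere local integrability of $\rho$ to local boundedness. Because $\rho$ is only known to be an $L^1_{loc}$ function, it cannot serve as its own test function, so the Moser or duality argument has to be realized on a regularized family $\rho_\varepsilon$ with careful commutator control, exploiting precisely the continuity of the leading coefficients that $p > d$ provides. This is exactly the technical content of \cite[Theorem 5.1]{BGS13}, whose conclusion we simply invoke; the role of Proposition~\ref{p;gdegep2.3} in the sequel is as a black box providing the regularity $\rho \in H^{1,p}_{loc}\cap C^{0,1-d/p}_{loc}$, and no finer information from the proof is needed.
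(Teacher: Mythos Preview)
The paper does not prove this proposition at all: it is stated without proof as a direct quotation of \cite[Theorem 5.1]{BGS13} (see the sentence immediately preceding it, ``We will use the following result from \cite[Theorem 5.1]{BGS13}, adapted to our needs''). So there is no ``paper's own proof'' to compare against; your final paragraph already identifies this correctly.

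Your sketch---rewriting the equation in divergence form via $a_{ij}\in H^{1,p}_{loc}$, bootstrapping $\rho$ from $L^1_{loc}$ to $L^\infty_{loc}$ through a Moser/duality argument applied to a mollified family with commutator control, then applying $L^p$-gradient estimates and Morrey's embedding---is a reasonable high-level outline of how such results are obtained (and is in the spirit of the Trudinger-type elliptic regularity \cite{T73} that underlies both \cite{BGS13} and \cite{BKRS}). Since the paper treats the proposition purely as a black box and your proposal ultimately does the same, there is nothing further to reconcile.
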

We further state a result originally due to Morrey (see the wrong statement in the original monograph \cite[Theorem 5.5.5']{Mor} and \cite[Theorem 1.7.4]{BKRS} and Corollaries for its correction).
\begin{prop}\label{t;morrey2.4}
Assume $p > d \ge 2$. Let $B' \subset \R^d$ be a ball, $h=(h_1,\ldots,h_d) : B' \to \R^d$ and $c,e : B' \to \R$ such that
\[
h_i \in L^p (B'), 1\le i\le d, \ \  \text{and} \ \ c,e \in L^q(B') \ \ \text{for} \ \ q:=\frac{dp}{d+p}.
\]
Let $A= (a_{ij})_{1 \le i,j \le d}$ be as in Theorem \ref{1-3}. Assume that $u \in H^{1,p}(B')$ is a solution of 
\[
\int_{B'} \sum_{i=1}^d \Big( \partial_i \varphi \Big( \sum_{j=1}^d \frac{a_{ij}}{2} \partial_j u + h_i u  \Big)  \Big) + \varphi (cu + e) \, dx =0, \quad \forall \varphi \in C_0^{\infty}(B'),
\]
Then for every ball $B$ with $\overline{B} \subset B'$, we obtain the estimate
\[
\| u \|_{H^{1,p}(B)} \le c_0 (\| e \|_{L^q(B')} + \| u \|_{L^1(B')}),
\]
where  $c_0 < \infty$ is some constant independent of $e$ and $u$.
\end{prop}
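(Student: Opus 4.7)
The plan is to establish the estimate by a localization-and-perturbation scheme: apply Morrey's interior $H^{1,p}$ regularity estimate for the principal divergence-form operator, and treat the first- and zero-order coefficients $h_i$, $c$ as lower-order perturbations, exploiting the embedding $H^{1,p}(B')\hookrightarrow L^{\infty}(B')$ available since $p>d$.

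First, I would fix concentric balls $B\subset B_1\subset B_2\subset B'$ with compactly nested closures together with a cut-off $\chi\in C_0^{\infty}(B_2)$ satisfying $\chi\equiv 1$ on $B_1$ and $0\le \chi\le 1$. A direct integration-by-parts calculation, starting from the weak equation with test functions of the form $\chi^2\psi$ and rearranging, shows that $v:=\chi u$ (extended by zero outside $B_2$) weakly solves a pure divergence-form equation on $\R^d$,
\[
-\tfrac{1}{2}\sum_{i,j}\partial_i(a_{ij}\partial_j v)=\sum_i\partial_i\widetilde{F}_i+\widetilde{f},
\]
in which $\widetilde{F}_i$ and $\widetilde{f}$ collect the contributions from $-h_iv$, the source term $\chi e$, the term $-cv$, and commutator terms produced by $\nabla\chi$ acting on $u$ and $\nabla u$. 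Since $p>d$, Morrey's embedding yields $u\in L^{\infty}(B_2)$, so $h_iu\in L^p(B_2)$ and $cu\in L^q(B_2)$ with the expected norm bounds.

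Next, I would invoke the interior Morrey/Meyers $H^{1,p}$ regularity estimate for divergence-form elliptic operators with $H^{1,p}$ principal coefficients, in the corrected form given in \cite[Theorem 1.7.4]{BKRS}, applied to $v$. This yields
\[
\|u\|_{H^{1,p}(B)}\le \|v\|_{H^{1,p}(B_1)}\le C\bigl(\|e\|_{L^q(B')}+\|u\|_{L^{\infty}(B_2)}+\|u\|_{L^p(B_2)}+\|u\|_{L^1(B_2)}\bigr),
\]
where the $L^{\infty}$ and $L^p$ norms of $u$ enter through the perturbative products $h_iu$, $cu$ and through the commutator terms involving $\nabla\chi$.

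Finally, to eliminate the $L^\infty$ and $L^p$ norms of $u$ in favor of the desired $L^1$-norm, I would use the Ehrling-type interpolation
\[
\|u\|_{L^{\infty}(B_2)}+\|u\|_{L^p(B_2)}\le \varepsilon\,\|u\|_{H^{1,p}(B_3)}+C_\varepsilon\|u\|_{L^1(B_3)},
\]
valid for any $B_2\subset B_3\subset B'$ and every $\varepsilon>0$ by Gagliardo--Nirenberg (and using $H^{1,p}\hookrightarrow C^0$ for $p>d$), combined with a Campanato-type iteration over a nested family of balls in $B'$ that absorbs the $\varepsilon$-term into the left-hand side. The main obstacle lies precisely in this absorption step: the iteration must be arranged so that the final constant $c_0$ depends only on $p$, $d$, the geometry of $B\subset B'$, and the coefficients $a_{ij}$, $h_i$, $c$, but neither on $e$ nor on $u$. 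This delicate bookkeeping is exactly the point that required the correction noted in \cite[Theorem 1.7.4]{BKRS} relative to Morrey's original formulation in \cite{Mor}.
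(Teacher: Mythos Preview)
The paper does not give its own proof of this proposition: it is stated as a quotation of a known result, attributed to Morrey \cite[Theorem~5.5.5']{Mor} in its (incorrect) original form and to \cite[Theorem~1.7.4]{BKRS} for the corrected version. There is therefore no ``paper's proof'' to compare your sketch against; the paper simply invokes the cited reference.

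As for the sketch itself, the overall strategy---localize by a cut-off, push the lower-order terms $h_i u$, $cu$ to the right-hand side using $H^{1,p}\hookrightarrow L^\infty$, and then interpolate/iterate to trade the resulting $L^\infty$ and $L^p$ norms of $u$ down to $L^1$---is in spirit how such estimates are obtained. However, your key step is circular as written: you appeal to ``the interior Morrey/Meyers $H^{1,p}$ regularity estimate \ldots\ in the corrected form given in \cite[Theorem~1.7.4]{BKRS}'' in order to control $\|v\|_{H^{1,p}}$, but \cite[Theorem~1.7.4]{BKRS} \emph{is} precisely the statement being asserted here (already including the first- and zero-order coefficients). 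If you want a genuine proof rather than a citation, you must replace that invocation by the actual engine---the $L^p$ Calder\'on--Zygmund/Campanato estimate for the frozen-coefficient operator plus a perturbation argument using the $H^{1,p}_{loc}$ (hence H\"older) regularity of $a_{ij}$---and only then run your absorption iteration. The final absorption step you describe is standard once that is in place, but without an independent source for the $H^{1,p}$ bound on the principal part your argument currently assumes what it sets out to prove.
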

Now, we will apply the standard arguments from \cite{AKR} whose details have been exposed in a very clear way in \cite{BGS13}.
We will briefly explain (until and including Remark \ref{sectorial}) the line of arguments how Propositions \ref{p;gdegep2.3} and \ref{t;morrey2.4} lead to elliptic regularity results for $(G_{\alpha})_{\alpha>0}$ and $(T_t)_{t>0}$ by using well-known arguments (see for instance \cite{AKR}, \cite{BGS13}, or \cite{RoShTr}). However, as we will see later, we will slightly improve some regularity results compared to the just mentioned papers. First, we choose an arbitrary $g\in C_0^{\infty}(\R^d)$, $\alpha>0$. Applying Proposition \ref{p;gdegep2.3} with
$$
\mu=-\rho\, G_{\alpha}g\,dx, \ h_i=\beta^{\rho,A}_i-b_i, \ 1\le i\le d,\ c=-\alpha, f=g\rho\in L^p_{loc}(\R^d),
$$
we obtain $\rho\, G_{\alpha}g\in H^{1,p}_{loc}(\R^d)$. Then, we apply Proposition \ref{t;morrey2.4} with
$$
u=\rho\, G_{\alpha}g, \ h_i=\sum_{j=1}^d\left ( \frac{\partial _j a_{ij}}{2}- (\beta^{\rho,A}_i-b_i)\right ), \ 1\le i\le d, 
$$
and 
$$
c=\alpha, \ e=\rho g\in L^q(B'),
$$
where 
\begin{eqnarray}\label{q}
q:=\frac{dp}{d+p}\in (d/2,p/2).
\end{eqnarray}
By the properties of $\rho$, we obtain 
\begin{eqnarray*}
\| \rho\, G_{\alpha}g \|_{H^{1,p}(B)} \le c_0 \left (\| g \|_{L^q(B',m)} + \| G_{\alpha}g \|_{L^1(B',m)}\right ),
\end{eqnarray*}
where $c_0$ is possibly different form the constant in Proposition \ref{t;morrey2.4}, but also doesn't depend on $g$. The last inequality is easily seen to extend to $g\in L^r(\R^d,m)$, $r\in [q,\infty]$, using the contraction properties of $(G_{\alpha})_{\alpha >0}$.
From that we then get that for any $r\in[q,\infty]$, $\alpha>0$ 
\begin{eqnarray}\label{regres}
\| \rho\, G_{\alpha}g \|_{H^{1,p}(B)} \le c_{0} \left (\| g \|_{L^r(B',m)} + \| G_{\alpha}g \|_{L^1(B',m)}\right ), \ \ \forall g\in L^r(\R^d,m),
\end{eqnarray}
where $c_{0}$ is a constant that may be different for different $\alpha$ and $r$, but  doesn't depend on $g$. Using the contraction properties of $(G_{\alpha})_{\alpha >0}$, (\ref{regres}) immediately implies
\begin{eqnarray}\label{regres2}
\| \rho\, G_{\alpha}g \|_{H^{1,p}(B)} \le c_{0} \| g \|_{L^r(\R^d,m)}, \ \ \forall g\in L^r(\R^d,m),
\end{eqnarray}
where $c_0$ in (\ref{regres}) may be different from $c_0$ in (\ref{regres2}) but has the same properties.\\
Writing $T_0:=id$ and
$$
T_t f=G_1(1-L_r)T_t f, \ f\in D(L_r), \ r\in[q,\infty), \ t\ge 0, 
$$
we can see by (\ref{regres}) that for any $r\in[q,\infty)$, $t\ge0$ 
\begin{eqnarray}\label{regsem}
\| \rho\, T_t f \|_{H^{1,p}(B)} \le c_{0} \| T_t f \|_{D(L_r)}, \ \ \forall f\in D(L_r),
\end{eqnarray}
where $c_{0}$ is a constant that may be different for different $r$, but  doesn't depend on $f$.\\
By Morrey's inequality applied to an arbitrary ball $B$, there exists a constant $c>0$ independent of $f$ such that
\[
\|\widetilde{f}\|_{C^{0,\beta}(\overline{B})} \le c \|f\|_{H^{1,p}(B)},\qquad \forall f \in H^{1,p}(B),
\]
where $\widetilde{f}$ on the left hand side is the unique continuous $dx$-version of $f \in H^{1,p}(B)$ and 
\begin{eqnarray}\label{beta2}
\beta := 1 - d/p. 
\end{eqnarray}
In our situation $\rho \in C^{0,\beta}(\overline{B})$ for any ball $B \subset \R^d$ and since $\inf_{x \in \overline{B}} \rho(x) > 0$, we obtain that $\frac{1}{\rho} \in C^{0,\beta}(\overline{B})$. Now
for $f,g \in C^{0,\beta}(\overline{B})$ it holds $f \cdot g \in C^{0,\beta}(\overline{B})$ and
\begin{equation}\label{product}
	\|f \cdot g\|_{C^{0,\beta}(\overline{B})} \le \|f\|_{C^{0,\beta}(\overline{B})} \|g\|_{C^{0,\beta}(\overline{B})}.
	\end{equation}
For any ball $B$, $t \ge 0, \alpha >0, g \in L^r(\R^d,m), r\in [q,\infty], f\in D(L_r),  r \in [q,\infty)$
	\[\|\rho G_\alpha g\|_{H^{1,p}(B)}, \ \|\rho T_t f\|_{H^{1,p}(B)}\]
	are bounded and so by Morrey's inequality applied to each ball $B$ and (\ref{product}) there exist unique locally H\"{o}lder continuous $m$-versions $R_\alpha g, P_t f$ of $G_\alpha g, T_t f$, where we set 
$$
P_0:=id, 
$$
with
$$
\|R_\alpha g\|_{C^{0,\beta}(\overline{B})} \le  \|\rho^{-1}\|_{C^{0,\beta}(\overline{B})} \|\rho R_\alpha g\|_{C^{0,\beta}(\overline{B})}\le \|\rho^{-1}\|_{C^{0,\beta}(\overline{B})} c\, \| \rho G_\alpha g\|_{H^{1,p}(B)}
$$
and 
$$
\|P_t f\|_{C^{0,\beta}(\overline{B})}  \le \|\rho^{-1}\|_{C^{0,\beta}(\overline{B})} c\, \| \rho T_t f\|_{H^{1,p}(B)}.
$$
Applying  (\ref{regres}), (\ref{regres2}), (\ref{regsem}) to the last two inequalities, we  get for any  $t\ge 0$, $\alpha>0$, $g\in L^r(\R^d,m)$,  $r\in[q,\infty]$, $f\in D(L_r)$,  $r\in[q,\infty)$, and any ball $B'$ with $\overline{B}\subset B'$
\begin{eqnarray}\label{gal}
\|R_\alpha g\|_{C^{0,\beta}(\overline{B})} & \le & c_{0}  \left (\| g \|_{L^r(B',m)} + \| G_{\alpha}g \|_{L^1(B',m)}\right ),
\end{eqnarray}
\begin{eqnarray}\label{gal2}
\|R_\alpha g\|_{C^{0,\beta}(\overline{B})} & \le & c_{0}  \| g \|_{L^r(\R^d,m)},
\end{eqnarray}
\begin{eqnarray}\label{tt}
\|P_t f\|_{C^{0,\beta}(\overline{B})} & \le & c_0 \|T_t f\|_{D(L_r)},
\end{eqnarray}
where $c_{0}$ is a constant that may be different for different $r$ (and different in each inequality \eqref{gal},  \eqref{gal2}, and \eqref{tt}), but  doesn't depend on $f$, nor on $g$.   
We summarize consequences of the derived estimates in the following proposition.
\begin{prop}\label{regular1}
Let $t\ge 0$, $\alpha>0$ be arbitrary and $q, \beta$ be defined as in (\ref{q}), (\ref{beta2}). 
Then under the conditions of Theorem \ref{1-3}, it holds:
\begin{itemize}
\item[(i)] $G_{\alpha}g$ has a locally H\"older continuous $m$-version 
$$
R_{\alpha}g\in 
C^{0,\beta}_{loc}(\R^d), \ \  \forall g\in \bigcup_{r\in [q,\infty]} L^r(\R^d,m).
$$
\item[(ii)] $T_t f$ has a locally H\"older continuous $m$-version 
$$
P_t f\in 
C^{0,\beta}_{loc}(\R^d),\ \  \forall f\in \bigcup_{r\in [q,\infty)} D(L_r).
$$
\item[(iii)] For any $f\in \bigcup_{r\in [q,\infty)} D(L_r)$ the map
$$
(x,t)\mapsto P_t f(x)
$$
is continuous on $ \R^d\times [0,\infty)$.
\end{itemize}
\end{prop}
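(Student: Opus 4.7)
The plan is to read parts (i) and (ii) straight off the estimates (\ref{gal}), (\ref{gal2}), and (\ref{tt}) that were just established, since those inequalities directly produce the locally H\"older continuous $m$-versions $R_\alpha g$ and $P_t f$ on each ball $\overline{B}$, and balls exhaust $\R^d$. The only substantial content lies in (iii).

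For (iii), I would fix $f \in D(L_r)$ for some $r \in [q,\infty)$, a point $(t_0, x_0) \in [0,\infty) \times \R^d$, and a ball $B$ with $x_0 \in B$. Using the triangle inequality
\[
|P_t f(x) - P_{t_0} f(x_0)| \le \|P_t f - P_{t_0} f\|_{C(\overline{B})} + |P_{t_0} f(x) - P_{t_0} f(x_0)|,
\]
the second term vanishes as $x \to x_0$ by the H\"older continuity of $P_{t_0} f$ from (ii). For the first term, I would first observe that the derivation of (\ref{tt}) actually establishes the more general fact that \emph{every} $h \in D(L_r)$ admits a continuous $m$-version $\widetilde h$ satisfying
\[
\|\widetilde h\|_{C^{0,\beta}(\overline{B})} \le c_0 \|h\|_{D(L_r)},
\]
because one may write $h = G_1(1-L_r)h$ and apply (\ref{regres2}) with $g := (1-L_r)h \in L^r(\R^d,m)$, combined with Morrey's inequality and the product estimate (\ref{product}) with $\rho^{-1}$. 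Applying this with $h := T_t f - T_{t_0} f \in D(L_r)$, whose continuous version equals $P_t f - P_{t_0} f$ by linearity of the whole construction, yields
\[
\|P_t f - P_{t_0} f\|_{C^{0,\beta}(\overline{B})} \le c_0 \|T_t f - T_{t_0} f\|_{D(L_r)}.
\]

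To close the argument I would invoke the standard commutation $L_r T_s = T_s L_r$ on $D(L_r)$ together with the strong continuity of $(T_s)_{s\ge 0}$ on $L^r(\R^d, m)$, applied separately to $f$ and to $L_r f$:
\[
\|T_t f - T_{t_0} f\|_{D(L_r)} = \|T_t f - T_{t_0} f\|_{L^r(\R^d,m)} + \|T_t L_r f - T_{t_0} L_r f\|_{L^r(\R^d,m)} \longrightarrow 0
\]
as $t \to t_0$. This covers both $t_0 > 0$ and $t_0 = 0$ (where the convention $P_0 = \mathrm{id}$ is harmless since $f$ itself has a continuous version by the general observation above). The only mildly delicate step is the extension of (\ref{tt}) from $T_t f$ to arbitrary elements of $D(L_r)$ so that it can be applied to the difference $T_t f - T_{t_0} f$; but this is already built into the same chain of reasoning, so no new ingredients beyond those already developed in the preceding discussion are required.
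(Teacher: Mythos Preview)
Your proposal is correct and follows essentially the same approach as the paper: parts (i) and (ii) are read off directly from the estimates (\ref{gal}), (\ref{gal2}), (\ref{tt}), and for (iii) the paper likewise applies (\ref{tt}) at $t=0$ to the difference $P_{t_n}f - P_{t_0}f \in D(L_r)$, uses the commutation $L_r T_s = T_s L_r$ together with strong continuity of $(T_s)_{s\ge 0}$ on $L^r(\R^d,m)$ to obtain convergence in $C^{0,\beta}(\overline{B})$, and then splits via the triangle inequality exactly as you do. Your explicit remark that (\ref{tt}) at $t=0$ amounts to the general estimate $\|\widetilde h\|_{C^{0,\beta}(\overline{B})} \le c_0 \|h\|_{D(L_r)}$ for all $h\in D(L_r)$ is a helpful clarification of a step the paper leaves implicit.
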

\begin{proof}
(i) and (ii) are direct consequences of (\ref{gal}), (\ref{gal2}), (\ref{tt}). In order to show (iii), let $f\in D(L_r)$ for some $r\ge q$ and $\left ((x_n,t_n)\right )_{n\ge 1}$ be a sequence in $\R^d\times [0,\infty)$ that converges to $(x_0,t_0)\in \R^d\times [0,\infty)$. Then there exists a ball $B$ such that $x_n\in \overline{B}$ for all $n\ge 0$. By (\ref{tt}) applied with $t=0$ to $P_{t_n}f-P_{t_0} f\in D(L_r)$, noting that $L_r (P_{t_n}f-P_{t_0} f)=P_{t_n}L_rf -P_{t_0}L_r f $ and using the continuity for each $g\in L^r(\R^d,m)$ of $t\mapsto P_t g$ on $[0,\infty)$, we obtain that $P_{t_n}f\to P_{t_0}f$ in $C^{0,\beta}(\overline{B})$.
Then it is clear from (ii) that
\begin{eqnarray*}
\left | P_{t_n} f(x_n)-P_{t_0} f(x_0) \right | \le \left | P_{t_n} f(x_n)-P_{t_0} f(x_n) \right |+\left | P_{t_0} f(x_n)-P_{t_0} f(x_0) \right |
\end{eqnarray*}
converges to zero as $n\to \infty$.
\end{proof}
\begin{rem}\label{sectorial}
\begin{itemize}
\item[(i)] In comparison to \cite{AKR}, \cite{BGS13}, \cite{RoShTr}, we obtained in Proposition \ref{regular1}(i) that $(G_{\alpha})_{\alpha>0}$ is $L^r(\R^d,m)$-strong Feller for any $r\in [q,\infty]$, which is an improvement to the mentioned papers since there it is  only obtained for $r\in [p,\infty]$. This plays a role, since it will imply (\ref{con1}) for $r=2$ . Indeed, we will see  later in Lemma \ref{nest}(ii) that $\int_0^t |f|^2(X_s)ds$ is finite in the sense of (\ref{con1}), whenever $f\in L^{2q}_{loc}(\R^d)$. But  $2q\in (d,p)$, hence $L^{p}_{loc}(\R^d)\subset L^{2q}_{loc}(\R^d)$. 
\item[(ii)] We can use Proposition \ref{regular1}(i) to get a resolvent kernel and a resolvent kernel density for any $x\in \R^d$. 
Indeed, for any $\alpha>0$, $x\in \R^d$, Proposition \ref{regular1}(i) implies that
\begin{eqnarray}\label{resker}
R_{\alpha}(x,A):=\lim_{l\to \infty}R_{\alpha}(1_{B_l\cap A})(x), \ A\in  \mathcal{B}(\R^d)
\end{eqnarray}
defines a  finite measure $R_{\alpha}(x,dy)$ on $(\R^d, \mathcal{B}(\R^d))$  (such that $\alpha R_{\alpha}(x,dy)$ is a  sub-probability measure) that is absolutely continuous with respect to $m$. The Radon-Nikodym derivative
\begin{eqnarray}\label{reskernext}
r_\alpha (x, \cdot):=\frac{R_{\alpha}(x,dy)}{m(dy)}
\end{eqnarray} 
then defines the desired resolvent kernel density.
\item[(iii)] If the $L^2(\R^d,m)$-semigroup $(T_t)_{t>0}$ is analytic (for instance, if the bilinear form in (\ref{gdf}) satisfies a sector condition) then by Stein interpolation $(T_t)_{t>0}$ is also analytic on $L^r(\R^d,m)$ for any $r \in (2,\infty)$ (cf. \cite[Remark 2.5]{RoShTr}). Hence by \cite[Ch. 2, Theorem 5.2(d)]{pazy}, we have for any $r \in [2,\infty)$, $f \in L^r(\R^d,m)$
\[
T_t f \in D(L_r),\ \ \ \text{and }\ \ \  \|L_r T_t f \|_{L^r(\R^d,m)} \le \frac{\text{const.}}{t}\|f\|_{L^r(\R^d,m)}.
\]
Therefore, (\ref{tt}) can be improved and extended as follows: for any $r\in [q\vee 2,\infty)$, $t>0$, $f\in L^r(\R^d,m)$ and any ball $B$
\begin{eqnarray}\label{tt2}
\|P_t f\|_{C^{0,\beta}(\overline{B})} & \le & c_0 \left(1 + \frac{\text{const.}}{t}\right) \|f\|_{L^r(\R^d,m)}.
\end{eqnarray}
We can then use (\ref{tt2}) to get a heat kernel and a heat kernel density for any $x\in \R^d$. 
Indeed, for any $t>0$, $x\in \R^d$, (\ref{tt2}) implies that
\begin{eqnarray}\label{heatker}
P_{t}(x,A):=\lim_{l\to \infty}P_{t}(1_{B_l\cap A})(x), \ A\in  \mathcal{B}(\R^d)
\end{eqnarray}
defines a sub-probability measure $P_{t}(x,dy)$ on $(\R^d, \mathcal{B}(\R^d))$ that is absolutely continuous with respect to $m$. The Radon-Nikodym derivative
\begin{eqnarray}\label{rnd}
p_t (x, \cdot):=\frac{P_{t}(x,dy)}{m(dy)}
\end{eqnarray}
then defines the desired heat kernel density. However, in general $(T_t)_{t>0}$ is not analytic and therefore we cannot impose analyticity.  Moreover, it is in general difficult to check analyticity, in particular the sector condition of the corresponding bilinear form (see for instance \cite[Section 5]{RoShTr}). 
\end{itemize}
\end{rem}
Unfortunately, by what is explained in Remark \ref{sectorial}(iii) the semigroup estimate (\ref{tt}) which leads to Proposition \ref{regular1}(ii) seems just not good enough to obtain a pointwise heat kernel from which one could then try to build a transition function of a nice Markov process. We will proceed by deriving more regularity in the following Theorem \ref{1-3reg}. 
\begin{theo}\label{1-3reg}
Let $A:=(a_{ij})_{1\leq i,j \leq d}, \ \mathbf{G},\ \rho, \ \beta^{\rho,A}$, and $\mathbf{B}$ be as in Theorem \ref{1-3}. For each $s \in [1, \infty]$, consider the $L^s (\R^d, m)$-semigroup $(T_t)_{t>0}$. Then for any $f \in L^s(\R^d, m)$ and $t>0$, $T_t f$ has a continuous $m$-version $P_t f$ on $\R^d$. More precisely, $P_{\cdot}f(\cdot)$ is locally parabolic H\"{o}lder continuous on $\R^d \times (0, \infty)$ and for any bounded open sets $U$, $V$ in $\R^d$ with $\overline{U} \subset V$ and $0<\tau_3<\tau_1<\tau_2<\tau_4$, i.e. $[\tau_1, \tau_2] \subset (\tau_3, \tau_4)$, we have for some $\gamma\in (0,1)$ the following estimate for all $f \in \cup_{s\in[1,\infty]} L^s(\R^d, m)$ with $f \geq 0$,
\begin{equation} \label{thm main est}
\|P_{\cdot} f(\cdot)\|_{C^{\gamma; \frac{\gamma}{2}}(\overline{U} \times [\tau_1, \tau_2])} \leq  C_6 \| P_{\cdot} f(\cdot) \|_{L^1( V \times (\tau_3, \tau_4), m\otimes dt) },
\end{equation}
where $C_6, \gamma$ are constants that depend on $\overline{U} \times [\tau_1, \tau_2], V \times (\tau_3, \tau_4)$, but are independent of $f$. 
\end{theo}
\begin{proof}
First assume $f \in C_{0}^{\infty}(\R^d)$, $f \geq 0$ and set $u(x,t):=\rho(x) P_t f(x)$. Then $f\in D(L_p)$ and by Proposition \ref{regular1}(iii) $P_t f(x)$ is jointly continuous on $\R^d\times[0,\infty)$. Therefore the same is true for $u(x,t)$. Let $\widehat{L}$ be as in (\ref{41c}) and $T>0$ be arbitrary. Then exactly as in \cite[(4.7)]{BKR2} (note that there the underlying measure $m=\mu$ is a probability measure but it doesn't matter), we get for any $\varphi\in C^{\infty}_0(\R^d\times (0,T))$
\begin{eqnarray}\label{**}
0=-\int _0^T\int_{\R^d}  \left ( \partial _t \varphi +\widehat{L}\varphi  \right ) u\,  dxdt. 
\end{eqnarray}
Note that $u \in H^{1,2}(O\times (0,T))$ for any bounded and open set $O\subset \R^d$. We can hence use integration by parts in the right hand term of (\ref{**}) and see that 
$$
0=\int_0^T\int_{\R^d} \left ( \frac{1}{2}\langle A \nabla u, \nabla \varphi \rangle + u \langle \beta, \nabla \varphi \rangle -u\partial_t\varphi \right ) dxdt,
$$
where $\beta:=\frac{1}{2}\nabla A+\mathbf{G}-2\beta^{\rho, A}\in L^{p}_{loc}(\R^d,\R^d)$, $(\nabla A)_i:=\sum_{j=1}^d \partial_j a_{ij}, 1\le i \le d$. \\
Let $\tau_2^*:=\frac{\tau_2+\tau_4}{2}$ and take $r>0$ so that 
$$
r<\min\left (\frac{1}{9}\sqrt{\frac{\tau_4-\tau_2}{14}},\frac{1}{9}\sqrt{\frac{\tau_1}{2}}\right )\quad \text{and }\quad R_{\bar{x}}(9r)\subset V, \ \forall \bar{x}\in \overline{U}.
$$ 
Then for all $(\bar{x}, \bar{t}) \in \overline{U} \times [\tau_1, \tau_2^*]$, we have $\bar{t}-2(9r)^2>0$ and
\[
R_{\bar{x}}(9r) \times (\bar{t}+6(9r)^2, \bar{t}+7(9r)^2)) \subset V \times (\tau_3, \tau_4).
\]
Using \cite[Theorem 4]{ArSe}, for any $(x,t), (y,s) \in R_{\bar{x}}(r) \times (\bar{t}-r^2, \bar{t})$ we have
\[
|u(x,t)-u(y,s)| \leq C_1 r^{-\gamma} \left(\|x-y\|+\sqrt{|t-s|} \right)^{\gamma} \sup_{R_{\bar{x}}(3r) \times (\bar{t}-(3r)^2, \bar{t})} u,
\]
where $C_1$ and $\gamma \leq 1-\frac{d}{p}$ are constants  independent of $f$, $r$ and $(\bar{x}, \bar{t})$.
Thus $u \in C^{\gamma; \frac{\gamma}{2}}\big(\bar{R}_{r}(\bar{x}) \times [\bar{t}-r^2, \bar{t}] \big)$ and
\begin{align*}
\|u\|_{C^{\gamma; \frac{\gamma}{2}}\big(\bar{R}_{r}(\bar{x}) \times [\bar{t}-r^2, \bar{t}] \big)} &\leq (1+C_1 r^{-\gamma} )\sup_{R_{\bar{x}}(3r) \times (\bar{t}-(3r)^2, \bar{t})} u.
\end{align*}
Using the compactness of $\overline{U} \times [\tau_1, \tau_2]$, there exist $(x_i, t_i) \in  \overline{U} \times [\tau_1, \tau^{*}_2]$, $i=1, \dots, N$, such that
\[
\overline{U} \times [\tau_1, \tau_2] \subset \bigcup_{i=1}^{N} R_{x_i}(r) \times (t_i-r^2, t_i)=:Q.
\]
Take a smooth partition of unity $(\phi_i)_{i=1,\dots,N}$ subordinate to $\left (R_{x_i}(r) \times (t_i-r^2, t_i) \right )_{i=1,\dots,N}$. For each $1\leq i \leq N$,\; $\phi_i u \in C^{\gamma; \frac{\gamma}{2}}(\overline{Q})$, so that $u=\sum_{i=1}^{N} \phi_i u$ in $\overline{U} \times [\tau_1, \tau_2]$ implies $u \in C^{\gamma; \frac{\gamma}{2}}(\overline{U}\times [\tau_1, \tau_2])$. Furthermore, we have
\begin{align} \nonumber
\|u\|_{C^{\gamma; \frac{\gamma}{2}}(\overline{U} \times [\tau_1, \tau_2])} &\leq \sum_{i=1}^{N} \left \|\phi_i u  \right \|_{C^{\gamma; \frac{\gamma}{2}}(\overline{U} \times [\tau_1, \tau_2])} \leq \sum_{i=1}^{N} \left \|\phi_i u  \right \|_{C^{\gamma; \frac{\gamma}{2}}(\overline{Q})} \\  \nonumber
&\leq \sum_{i=1}^{N} \|\phi_i \|_{C^{\gamma; \frac{\gamma}{2}}(\overline{Q})} \|u\|_{C^{\gamma; \frac{\gamma}{2}}\big(\bar{R}_{r}(x_i) \times [t_i-r^2, t_i] \big)} \\ \label{1stes}
&\leq \underbrace{\left( \sum_{i=1}^{N} \|\phi_i \|_{C^{\gamma; \frac{\gamma}{2}}(\overline{Q})}\cdot (1+C_1r^{-\gamma} )\right) }_{=:C_2} \cdot \max_{1\leq i \leq N}\left( \sup_{R_{x_i(3r)} \times (t_i-(3r)^2, t_i)} u \right).
\end{align}
Then, by \cite[Theorems 2 and 3]{ArSe}, for each $1 \leq i \leq N$
\begin{align}  \nonumber
\sup_{R_{x_i}(3r) \times (t_i-(3r)^2, t_i)} u &\leq C_3 \|u \|_{L^2(R_{x_i(9r)} \times (t_i-(9r)^2, t_i))} \\  \nonumber
& \leq C_3 (18r)^{\frac{d}{2}} \cdot (9r) \sup_{R_{x_i}(9r) \times (t_i-(9r)^2, t_i)} u \\  \nonumber
& \leq C_3 (18r)^{\frac{d}{2}} \cdot (9r) \cdot C_4 \inf_{R_{x_i}(9r) \times (t_i+6(9r)^2, t_i+7(9r)^2)} u \\  \nonumber
& \leq C_3 C_4  (18r)^{-\frac{d}{2}} \cdot (9r)^{-1}  \| u \|_{L^1(R_{x_i}(9r) \times (t_i+6(9r)^2, t_i+7(9r)^2))} \\  \label{2stes}
&\leq  \underbrace{C_3 C_4  (18r)^{-\frac{d}{2}} (9r)^{-1}}_{=:C_5}  \| u \|_{L^1( V \times (\tau_3, \tau_4)) },
\end{align}
where $C_3$ and $C_4$ are constants which are independent of $f$ and $x_i$.
Combining \eqref{1stes}, \eqref{2stes} we have for $s \in [1, \infty)$
\begin{eqnarray}\label{rest} 
\|P_{\cdot} f(\cdot)\|_{C^{\gamma; \frac{\gamma}{2}}(\overline{U} \times [\tau_1, \tau_2])} & \leq& \| \rho^{-1} \|_{C^{\gamma}(\overline{U} \times [\tau_1, \tau_2])}   \|\rho(\cdot) P_{\cdot} f(\cdot)\|_{C^{\gamma; \frac{\gamma}{2}}(\overline{U} \times [\tau_1, \tau_2])}  \nonumber \\
&\leq&  \underbrace{\| \rho^{-1} \|_{C^{\gamma}(\overline{U} \times [\tau_1, \tau_2])}  C_2 C_5 }_{=:C_6} \| P_{\cdot} f(\cdot) \|_{L^1( V \times (\tau_3, \tau_4), m\otimes dt) }  \nonumber\\ 
& \leq&  C_6  (\tau_4-\tau_3) \|\rho \|_{L^1(V)}^{\frac{s-1}{s}}  \|f\|_{L^s(\R^d, m) }. 
 \end{eqnarray}
For  $f\in L^s(\R^d, m)$, $f\ge 0$, define 
\begin{eqnarray}\label{rot}
P_{\cdot} f(\cdot):=\lim_{n\to \infty}P_{\cdot} f_n(\cdot) \ \text{ in }\ C^{\gamma; \frac{\gamma}{2}}(\overline{U} \times [\tau_1, \tau_2]),
\end{eqnarray}
where $(f_n)_{n\ge 1}\subset C_0^{\infty}(\R^d)$ is any sequence of positive functions converging to $f$ in $ L^s(\R^d, m)$. Then $P_{\cdot} f(\cdot)$ is well-defined, i.e. independent of the choice of $(f_n)_{n\ge 1}$, and \eqref{rest} (including all intermediate inequalities) extends to $f\in L^s(\R^d, m)$, $f\ge 0$. In particular, \eqref{thm main est} holds for $f\in L^s(\R^d, m)$, $f\ge 0$, $s\in[1,\infty)$.\\
Moreover, given $f \in L^s(\R^d, m)$, $f\ge 0$, and $f_n \in C_0^{\infty}(\R^d)$, $f_n\ge 0$, with $f_n \rightarrow f$ in $L^s(\R^d, m)$, for each $t>0$ we have
$T_t f_n \rightarrow T_t f$ in $L^s(U, m)$ and also $P_t f_n \rightarrow P_t f$ in $L^s(U, m)$ by \eqref{rot} holds for $s\in[1,\infty)$. Thus
\begin{equation} \label{ae}
\qquad P_t f = T_t f \quad m \text{-a.e. on } U \text{ for each } t>0. 
\end{equation}
This holds for arbitrary bounded open $U$, hence also on $\R^d$. Thus $P_t f$ is an $m$-version of $T_t f$.\\ 
For $f \in L^{\infty}(\R^d, m)$, $f\ge 0$, take $f_n:=1_{B_n} \cdot f$ with $n\ge 1$. Then for each $t>0$,
\begin{equation}\label{ae2}
T_t f= \lim_{n \rightarrow \infty}T_t f_n=\lim_{n \rightarrow \infty}P_t f_n, \; m \text{-a.e.\; on } \R^d.  
\end{equation}
For each fixed $(x,t) \in V \times (\tau_3, \tau_4)$, $(P_t f_n (x) )_{n\ge 1}$ is an increasing sequence of real numbers that is bounded by one by the sub-Markovian property and continuity of $z\mapsto P_t f_n (z)$. Thus \eqref{thm main est} for $s=1$ and Lebesgue's dominated convergence theorem imply that $(P_{\cdot} f_n (\cdot ))_{n\ge 1}$ is a Cauchy sequence in 
$C^{\gamma; \frac{\gamma}{2}}(\overline{U} \times [\tau_1, \tau_2])$.
Hence we can again define 
$$
P_{\cdot} f(\cdot):=\lim_{n\to \infty}P_{\cdot} f_n(\cdot) \ \text{ in }\ C^{\gamma; \frac{\gamma}{2}}(\overline{U} \times [\tau_1, \tau_2])
$$
and \eqref{thm main est} also holds for $s=\infty$. Moreover for each $t>0$, $P_t f_n$ converges uniformly to $P_t f$ in $U$, hence in view of \eqref{ae2}, \eqref{ae} also holds for $s=\infty$. Since $U$ is an arbitrary bounded open subset in $\R^d$, we have hence shown that for any $f\in \cup_{s\in[1,\infty]} L^s(\R^d, m)$, $f\ge 0$, $P_{\cdot} f(\cdot)$ is locally parabolic H\"{o}lder continuous in $\R^d \times (0, \infty)$ and for each $t>0$, $P_t f = T_t f$ $m$-a.e. on $\R^d$. By linearity the latter is obviously also true for $f$ without the positivity assumption.
\end{proof}\\
\centerline{}
\begin{rem}\label{hkeandcomp}
(i) \eqref{thm main est} easily implies for any $s\in [1,\infty]$, $f\in L^s(\R^d, m), t>0$ (cf. for instance \eqref{rest} for $s\in [0,\infty)$ and use the sub-Markovian property for $s=\infty$) that  
\begin{eqnarray}\label{Fellerfixedtime}
\|P_{t} f \|_{C^{0,\gamma}(\overline{U})} & \leq & 2 C_6 (\tau_4-\tau_3) \|\rho \|_{L^1(V)}^{\frac{s-1}{s}} \cdot \|f\|_{L^s(\R^d, m) },
\end{eqnarray}
where $\frac{s-1}{s}:=1$ for $s=\infty$. \eqref{Fellerfixedtime} is an improvement over \eqref{tt2} in regard to analyticity, which is no more required for \eqref{Fellerfixedtime}, and in regard to the integrability order which is $s\in  [1,\infty]$ for \eqref{Fellerfixedtime} but $r\in [q\vee 2,\infty)$  for \eqref{tt2}. Only the H\"older exponent $\gamma$ in \eqref{Fellerfixedtime} depends on the domain and may vary, whereas in  \eqref{tt2} it is always $\beta$ as in \eqref{beta2}, independently of the domain. \\
Using Theorem \ref{1-3reg}, we can define $P_t(x,A)$ as in (\ref{heatker}) and we see that there exist unique sub-probability measures  $P_{t}(x,dy)$ on $(\R^d, \mathcal{B}(\R^d))$, absolutely continuous with respect to $m$ and with Radon-Nikodym derivatives $p_t(x,\cdot)$ defined by (\ref{rnd}). \\
(ii) Let $A:=(a_{ij})_{1\leq i,j \leq d}, \ \mathbf{G},\ \rho, \ \beta^{\rho,A}$, and $\mathbf{B}$ be as in Theorem \ref{1-3}, but suppose $p>d+2$ and that $m$ is a probability measure. In this case similar results to Theorem \ref{1-3reg} and the following Proposition \ref{regular2}(ii) and some additional structure with respect to duality is derived in \cite[Theorem 4.1]{BKR2}. The technique of proof is different to ours but also applies if $m$ is not restricted to be a probability measure (cf. \cite[Remark 4.2(ii)]{BKR2}). However, we insist that $K_t(x,dy)$ as occurring in \cite[Remark 4.2(ii)]{BKR2} is in contrast to what is mentioned in \cite[Remark 4.2(ii)]{BKR2} always a sub-probability measure and hence finite and moreover in case of merely locally finite measure only the  $L^1(\R^d, m)$-strong Feller property follows, whereas we derive the $L^{[1,\infty]}(\R^d,m)$-strong Feller property (see Theorem \ref{1-3reg} and Proposition \ref{regular2} for the definition), that includes the classical strong Feller property.\\
(iii) As opposed to \cite[Proposition 3.8]{AKR}, we do not need the condition $\alpha R_{\alpha}1_{\R^d}\equiv 1$  in order to derive the classical strong Feller property of $(P_t)_{t>0}$.  Also in \cite{ZhXi16}, non-explosion (see \eqref{prop1.5eq} below)  is used to obtain the classical strong Feller property. 
\end{rem}
\bigskip
Using Theorem \ref{1-3reg}, we obtain the following improvement of Proposition \ref{regular1}:
\begin{prop}\label{regular2}
Let $t,\alpha>0$ be arbitrary. Let $q, \beta$ be defined as in (\ref{q}), (\ref{beta2}), $r_{\alpha}(x,y)$ as in Remark \ref{sectorial}, and $p_t(x,y)$ as in Remark \ref{hkeandcomp}. Then under the conditions of Theorem \ref{1-3}, it holds:
\begin{itemize}
\item[(i)] $G_{\alpha}g$ has a locally H\"older continuous $m$-version of order $\beta=1 - d/p$
\begin{eqnarray}\label{tgy}
R_{\alpha}g=\int_{\R^d}g(y) R_{\alpha}(\cdot,dy)=\int_{\R^d}g(y)r_{\alpha}(\cdot,y)m(dy)
, \ \  \forall g\in \bigcup_{r\in [q,\infty]} L^r(\R^d,m).
\end{eqnarray}
In particular,  \eqref{tgy} extends by linearity  to all $g\in L^q(\R^d,m)+L^\infty(\R^d,m)$, i.e.  $(R_{\alpha})_{\alpha>0}$ is $L^{[q,\infty]}(\R^d,m)$-strong Feller.
\item[(ii)] $T_t f$ has a continuous $m$-version   
\begin{eqnarray}\label{tgy2}
P_t f= \int_{\R^d} f(y) P_t(\cdot,dy)=\int_{\R^d}f(y)p_{t}(\cdot,y)m(dy)
,\ \  \forall f\in \bigcup_{s\in [1,\infty]}L^s(\R^d,m).
\end{eqnarray}
($P_t f$ is locally H\"{o}lder continuous of order $\beta=1 - d/p$, if $f\in  \bigcup_{r\in [q,\infty)} D(L_r)$) and locally H\"{o}lder continuous with possibly changing H\"older exponents, if $f\in  \bigcup_{s\in [1,\infty]}L^s(\R^d,m)\setminus \bigcup_{r\in [q,\infty)} D(L_r)$.
In particular,  \eqref{tgy2} extends by linearity  to all $f\in L^1(\R^d,m)+L^\infty(\R^d,m)$, i.e.   $(P_{t})_{t>0}$ is $L^{[1,\infty]}(\R^d,m)$-strong Feller.
\end{itemize}
Finally, for any $\alpha>0, x\in \R^d$, $g\in L^q(\R^d,m)+L^\infty(\R^d,m)$
$$
R_{\alpha}g(x)=\int_0^{\infty} e^{-\alpha t} P_t g(x)\,dt.
$$
\end{prop}
\begin{proof}
Fix $\alpha>0, t>0, x\in \R^d$. Let $A \in \mathcal{B}(\R^d)$. Using \eqref{resker}, \eqref{reskernext}, monotone integration and \eqref{gal}, we can see that 
\begin{eqnarray}\label{bb0}
\int_{\R^d}1_A(y)r_{\alpha}(x,y)\,m(dy)=\int 1_A(y)R_{\alpha}(x,dy)=\lim_{l\to \infty}R_{\alpha}(1_{B_l\cap A})(x)=R_\alpha 1_A(x).
\end{eqnarray}
Using \eqref{heatker}, \eqref{rnd}, monotone integration and \eqref{thm main est} (cf. proof of Theorem \ref{1-3reg}) , we can see that 
\begin{eqnarray}\label{bb02}
\int_{\R^d}1_A(y) p_t(x,y) m(dy)= \int_{\R^d} 1_A(y) P_t(x,dy)=\lim_{l\to \infty}P_t 1_{B_l\cap A}(y)=P_t 1_A(x).
\end{eqnarray}
\eqref{bb0}, resp.  \eqref{bb02} extends to $g\in L^r(\R^d,m)$, $r\in [q,\infty]$, resp. $g\in L^s(\R^d,m)$, $s\in [1,\infty]$ 
in the following way. Split $g,f$ in positive and negative parts. We may hence assume that $g,f$ are positive. Then we use a monotone approximation of $g$, resp. $f$ with simple functions involving indicator functions like above, i.e. there exists an increasing sequence of simple functions $(g_n)_{n\ge 1}$ with $0\le g_n\nearrow g$, resp. $(f_n)_{n\ge 1}$ with $0\le f_n\nearrow f$. By this we can use monotone integration for the two left hand terms of \eqref{bb0}, resp.  \eqref{bb02}, and \eqref{gal}, resp.  \eqref{thm main est} for the left hand term. Thus (i) and (ii) follow.\\
The last statement follows similarly noting that for  $A \in \mathcal{B}(\R^d)$
$$
R_{\alpha}1_A=\int_0^{\infty} e^{-\alpha t} P_t 1_A\,dt
$$
$m$-a.e. hence everywhere since both sides define continuous functions and we can as before use monotone integration as well as \eqref{gal} and \eqref{thm main est} to prove the remaining assertion.
\end{proof}\\
\centerline{}
\begin{rem}\label{transfct}
Under the conditions of Theorem \ref{1-3}, we obtain analogously to \cite{AKR} that $(P_t)_{t>0}$ defined on  
$$
L^{\infty}(\R^d,m)=L^{\infty}(\R^d)\supset \mathcal{B}_b(\R^d)
$$ 
determines a (temporally homogeneous) submarkovian transition function (cf. \cite[1.2]{CW}). Thus $(P_t)_{t>0}$ satisfies condition $(\bf{H1})$ of \cite{ShTr13a}. Moreover, $P_t f$, $t>0$, is by Proposition \ref{regular2}(ii) independent of the $m$-version chosen for 
$f\in L^{\infty}(\R^d,m)$.
\end{rem}
By the results of \cite[Section 4.1]{Tr5}, the generalized Dirichlet form $\mathcal{E}$ associated with $(L_2,D(L_2))$ is strictly quasi-regular. In particular, by \cite[Theorem 6]{Tr5}  there exists a Hunt process 
$$
\tilde{\M} = (\tilde{\Omega}, \tilde{\F}, (\tilde{\F})_{t \ge 0}, (\tilde{X}_t)_{t \ge 0}, (\tilde{\P}_x)_{x \in \R^d \cup \{ \Delta \} })
$$ 
with state space $\R^d$ and life time $\tilde\zeta:=\inf\{t\ge 0\,:\,\tilde{X}_t=\Delta\}$ and cemetery $\Delta$ such that $\mathcal{E}$ is (strictly properly) associated with $\tilde{\M}$.\\
For some fixed $\varphi \in L^1(\R^d,m)_b$, $0 < \varphi \le 1$, consider the strict capacity $\text{cap}_{1,{\widehat{G}}_1\varphi}$ of $\mathcal{E}$ as defined in \cite[Definition 1]{Tr5}. Due to the properties of smooth measures with respect to $\text{cap}_{1,{\widehat{G}}_1\varphi}$ in \cite[Section 3]{Tr5} one can consider the work \cite{Tr2} with cap$_{\varphi}$ (as defined in \cite{Tr2}) replaced by $\text{cap}_{1,{\widehat{G}}_1\varphi}$. In particular \cite[Theorem 3.10 and Proposition 4.2]{Tr2} apply with respect to the strict capacity $\text{cap}_{1,{\widehat{G}}_1\varphi}$ and therefore the paths of $\tilde{\M}$ are continuous $\tilde{\P}_x$-a.s. for strictly $\mathcal{E}$-q.e. $x \in \R^d$ on the one-point-compactification $\R^d_{\Delta}$ of $\R^d$ with $\Delta$ as point at infinity. We may hence assume that 
\begin{equation}\label{contipath}
\tilde{\Omega} = \{\omega = (\omega (t))_{t \ge 0} \in C([0,\infty),\R^d_{\Delta}) \, : \, \omega(t) = \Delta \quad \forall t \ge \zeta(\omega) \}
\end{equation}
and
\[
\tilde{X}_t(\omega) = \omega(t), \quad t \ge 0.
\]
Now, we can apply the Dirichlet form method of \cite[Section 2.1.2]{ShTr13a}. There it was only developed in a symmetric setting. But here we are in the non-sectorial setting. However one can readily check  that it works nearly in the same way using Lemma \ref{algebra} instead of \cite[Lemma 2.5(i)]{ShTr13a} and modifying ${(\textbf{H2})^{\prime}}$ of \cite[Section 2.1.2]{ShTr13a} in the following way:\\ \\
${(\textbf{H2})^{\prime}}$ We can find $\{ u_n \, : \, n \ge 1 \} \subset D(L_1) \cap C_0(\R^d)$ satisfying:
\begin{itemize}
\item[(i)] For all $\varepsilon \in \mathbb{Q} \cap (0,1)$ and
$y \in D$, where $D$ is any given countable dense set in $\R^d$, there exists $n \in \N$ such that $u_n (z) \ge 1$, for all $z \in \overline{B}_{\frac{\varepsilon}{4}}(y)$ and $u_n \equiv 0$ on $\R^d \setminus B_{\frac{\varepsilon}{2}}(y)$,
\item[(ii)] $R_1\big( [(1 -L_1) u_n]^+ \big)$, $R_1\big( [(1 -L_1) u_n]^- \big)$, $R_1 \big( [(1-L_1)u_n^2]^+ \big)$, $R_1 \big( [(1-L_1)u_n^2]^- \big)$ are continuous on $\R^d$ for all $n \ge 1$,
\end{itemize}
and
\begin{itemize}
\item[(iii)] $R_1 C_0(\R^d) \subset C(\R^d)$,
\item[(iv)] For any $f \in C_0(\R^d)$ and $x \in \R^d$, the map $t \mapsto P_t f(x)$ is right-continuous on $(0,\infty)$.
\end{itemize}
We have $C_0^{2}(\mathbb{R}^d)\subset D(L_1) \cap C_0(\R^d)$ and moreover obviously $(1 -L_1) u, (1 -L_1) u^2 \in L^p(\R^d)_0$ 
for any $u\in C_0^{2}(\mathbb{R}^d)$. Consequently, by Theorem \ref{1-3reg} and Proposition \ref{regular2},  ${(\textbf{H2})^{\prime}}$  is satisfied for some countable subset of $C_0^{2}(\mathbb{R}^d)$.\\ \\
Therefore,  we obtain:
\begin{theo}\label{existhunt}
Under the conditions of Theorem \ref{1-3}, there exists a Hunt process
\[
\M =  (\Omega, \F, (\F_t)_{t \ge 0}, (X_t)_{t \ge 0}, (\P_x)_{x \in \R^d\cup \{\Delta\}}   )
\]
with state space $\R^d$ and life time 
$$
\zeta=\inf\{t\ge 0\,:\,X_t=\Delta\}=\inf\{t\ge 0\,:\,X_t\notin \R^d\}, 
$$
having the transition function $(P_t)_{t \ge 0}$ as transition semigroup, such that $\M$ has continuous sample paths in the one point compactification $\R^d_{\Delta}$ of $\R^d$ with the cemetery $\Delta$ as point at infinity.
\end{theo}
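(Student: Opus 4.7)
The plan is to assemble the Hunt process $\M$ out of the machinery already developed in the preceding pages, rather than to construct it from scratch. I would first invoke the Hunt process $\tilde{\M}$ from strict quasi-regularity (\cite[Section 4.1]{Tr5} together with \cite[Theorem 6]{Tr5}), which is associated with $\mathcal{E}$ in the strict sense and by \cite[Theorem 3.10, Proposition 4.2]{Tr2} (applied with $\text{cap}_{1,\widehat{G}_1\varphi}$ in place of $\text{cap}_\varphi$) already has continuous sample paths on $\R^d_\Delta$ for strictly $\mathcal{E}$-q.e.\ starting point. Thus $\tilde{\M}$ supplies the underlying path space (\ref{contipath}) and the transition semigroup $(\tilde P_t)$ on a set of strictly full measure, and the only remaining issue is to upgrade to \emph{every} starting point $x \in \R^d$.

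Next, I would invoke the Dirichlet form method of \cite[Section 2.1.2]{ShTr13a}, adapted to the non-sectorial case exactly as the authors indicate: Lemma \ref{algebra} plays the role of \cite[Lemma 2.5(i)]{ShTr13a}, and condition ${(\textbf{H2})^{\prime}}$ is used in the modified form listed above. Verification of ${(\textbf{H2})^{\prime}}$ then splits into four items. For (i), I would use a standard cutoff/bump construction inside $C_0^2(\R^d)$, which we already know is contained in $D(L_1) \cap C_0(\R^d)$ since any $u \in C_0^2(\R^d)$ satisfies $u, L_2 u \in L^r(\R^d,m)$ for every $r \in [1,\infty)$ by local boundedness of $a_{ij}$, $g_i$ in the appropriate $L^p_{loc}$-norms. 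For (ii), the quantities $[(1-L_1)u_n]^\pm$ and $[(1-L_1)u_n^2]^\pm$ lie in $L^p(\R^d)_0$, and Proposition \ref{regular2}(i) hands us the continuous $m$-version of $R_1$ applied to any such function. For (iii), $R_1 C_0(\R^d) \subset C(\R^d)$ follows again from Proposition \ref{regular2}(i) since $C_0(\R^d) \subset L^r(\R^d,m)$ for every $r$. For (iv), the right-continuity of $t \mapsto P_t f(x)$ on $(0,\infty)$ for $f \in C_0(\R^d)$ follows from the joint continuity established in Theorem \ref{1-3reg}(i) together with Proposition \ref{regular2}(ii).

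Condition $(\textbf{H1})$ is delivered by Remark \ref{transfct}: the pointwise formula $P_t f(x) = \int f(y)\,p_t(x,y)\,m(dy)$ extends to all $f \in \mathcal{B}_b(\R^d)$ by monotone integration and defines an honest sub-Markovian transition function, using that $P_t(x,\cdot)$ is already a finite kernel from Theorem \ref{1-3reg}(i). With both $(\textbf{H1})$ and $(\textbf{H2})^{\prime}$ in hand, the construction in \cite{ShTr13a} produces a family of probability measures $(\P_x)_{x \in \R^d}$ on the continuous path space (\ref{contipath}), glued together out of $(\tilde{\P}_x)$ where possible and extended to the exceptional set using the transition function $(P_t)_{t>0}$, yielding a Hunt process $\M$ with the required state space, lifetime $\zeta = \inf\{t \ge 0 : X_t \notin \R^d\}$, continuous paths in $\R^d_\Delta$, and transition semigroup $(P_t)_{t \ge 0}$.

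The only genuine obstacle is the non-sectorial setting, which is why the modified ${(\textbf{H2})^{\prime}}$ is needed and why one must use Lemma \ref{algebra} in place of the symmetric algebra identity of \cite{ShTr13a}; once these two substitutions are made, the arguments of \cite[Section 2.1.2]{ShTr13a} go through verbatim, and the theorem follows. I would not re-derive those arguments in detail but rather cite \cite{ShTr13a}, noting in one sentence that the adaptation is routine once the two modifications above are installed.
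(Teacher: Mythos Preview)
Your proposal is correct and follows essentially the same route as the paper: the theorem is stated as a consequence of the preceding discussion, which invokes $\tilde{\M}$ from strict quasi-regularity (\cite{Tr5}) with continuous paths in $\R^d_\Delta$ for strictly $\mathcal{E}$-q.e.\ starting points, and then applies the Dirichlet form method of \cite[Section 2.1.2]{ShTr13a} with Lemma \ref{algebra} replacing \cite[Lemma 2.5(i)]{ShTr13a} and the modified ${(\textbf{H2})^{\prime}}$ verified on a countable subset of $C_0^2(\R^d)$ via Theorem \ref{1-3reg}(i) and Proposition \ref{regular2}, together with $(\textbf{H1})$ from Remark \ref{transfct}. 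Your breakdown of the verification of items (i)--(iv) of ${(\textbf{H2})^{\prime}}$ is in fact more explicit than the paper's own one-line summary.
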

\begin{rem}\label{refinement}
Checking the details of  \cite[Section 4]{AKR} one by one with possibly only few modifications one may possibly also obtain Theorem \ref{existhunt}. 
\end{rem}
\begin{lem}\label{fini1}
Let $\E_x$ denote the expectation with respect to $\P_x$, $x\in \R^d$, where $\M$ is as in  Theorem \ref{existhunt}. Then:
\begin{itemize}
\item[(i)] For any $x\in \R^d$, $\alpha>0, t>0$, we have
\[
R_{\alpha}g(x)=\int_{\R^d}r_{\alpha}(x,y)g(y)m(dy)=\E_x \left [\int_0^\infty e^{-\alpha s} g(X_s)ds\right ],  
\]
for any $g\in L^q(\R^d,m)+L^\infty(\R^d,m)$, and
\[
P_t f(x)=\int_{\R^d}p_{t}(x,y)f(y)m(dy)=\E_x\left [f(X_t)\right ], 
\]
for any $f\in L^1(\R^d,m)+L^\infty(\R^d,m)$. \\
In particular, integrals of the form 
$\int_0^\infty e^{-\alpha s} h(X_s)ds$, $\int_0^t h(X_s)ds$, $t\ge 0$ are for any $x\in \R^d$, whenever they are well-defined,  $\P_x$-a.s.  independent of the measurable $m$-version chosen for $h$. 
\item[(ii)] Let $g\in L^r(\R^d,m)$ for some $r\in [q,\infty]$. Then for any ball $B$ there exists a constant $c_{B,r}$, depending in particular on $B$ and $r$, such that for all $t \ge  0$
\begin{equation} \label{resest}
\sup_{x\in \overline{B}}\E_x\left [ \int_0^t |g|(X_s) \, ds \right ] \le  e^t c_{B,r} \|g\|_{L^r(\R^d, m)}.
\end{equation}
In particular, by the local boundedness of $\rho$, for any $f\in L^{q}(\R^d)_0$, and any open ball $B$, \eqref{resest} implies
$$
\sup_{x\in \overline{B}}\E_x\left [ \int_0^t |f|(X_s) \, ds \right ] \le c\, \|f\|_{L^q(\R^d)},
$$
where the constant $c$ depends only on $t,q, B$ and the support of $f$.
\item[(iii)] Let $u \in D(L_r)$, for some $r \in [q, \infty)$ and $\alpha > 0$, $t>0$. Then for any $x \in\R^d$
\[
R_{\alpha} \big( (\alpha - L_r) u\big)(x) = u(x),
\]
and 
\[
P_t u(x) - u(x) = \int_0^t P_s(L_ru)(x) \, ds.
\]
\end{itemize}
\end{lem}
\begin{proof}
(i) By Remark \ref{transfct} and Theorem \ref{existhunt}, we have for any $t> 0$, $x\in \R^d$, $h\in L^{\infty}(\R^d,m)$
\begin{eqnarray}\label{process}
P_t h(x)  = \int_{\R^d}p_{t}(x,y)h(y)m(dy)=\E_x\left [h(X_t)\right ], 
\end{eqnarray}
and the expressions in (\ref{process}) are all well-defined, i.e. do not change in value for any $m$-version of $h$. Now the resolvent and semigroup representations follow by splitting functions in $g\in \bigcup_{r\in [q,\infty]} L^r(\R^d,m)$ and $f\in\bigcup_{s\in [1,\infty]} L^s(\R^d,m)$ into their positive and negative parts, using monotone approximations of these with functions in $L^{\infty}(\R^d,m)$ and finally linearity, which is possible since all expressions are finite by Proposition \ref{regular2}. In particular, the limits will as the original expressions in (\ref{process}) also not depend on the chosen $m$-versions, which concludes the proof.\\
(ii) Using in particular (i) and \eqref{gal2}, we get
\begin{eqnarray*}
\sup_{x \in \overline{B}} \E_x\left [ \int_0^t |g|(X_s) \, ds \right ] &\leq&  e^t\, \sup_{x \in \overline{B}}\, \E_x\left [ \int_0^{\infty}  e^{-s}|g|(X_s)  \,ds \right ] \\
&= &e^t \sup_{x \in \overline{B}}\, R_1 |g|(x) \leq e^t c_{B} \|g\|_{L^r(\R^d, m)}.
\end{eqnarray*}
Using (i), the proof of (iii) works exactly as in \cite[Lemma 5.1]{AKR}. However, we emphasize that due to the increased regularity  $r\ge q$ from (i) (coming from Proposition \ref{regular1}) in comparison to  $r\ge p$ in \cite{AKR}, we obtain more general statements in (ii) and (iii).
\end{proof}\\
For $A\in \mathcal{B}(\R^d)$, define
$$
\sigma_{A}:=\inf\{t>0\,:\, X_t\in A\}
$$
and 
$$
\sigma_n:=\sigma_{\R^d\setminus B_n},n\ge 1.
$$
\begin{lem}\label{nest}
Let $\M$ be as in  Theorem \ref{existhunt}. Then:
\begin{itemize}
\item[(i)] For any $x \in \R^d$, we have
\[
\P_x \Big(\lim_{n \rightarrow \infty} \sigma_{n} \ge \zeta \Big)=1.
\] 
\item[(ii)] For any $x\in \R^d$, $t\ge 0$, we have
\[
\P_x\left (\int_0^t |f|(X_s)ds<\infty\right )=1,  \text{ if }\ \ f\in \bigcup_{r \in [q, \infty]} L^r(\R^d,m)
\]
and
\[
\P_x \left (\left \{\int_0^t |f|(X_s)ds<\infty\right \} \cap \{t<\zeta\}\right )=\P_x \left (\{t<\zeta\}\right ), \text{ if } \  \ f\in L^q_{loc}(\R^d,m).
\]
\end{itemize}

\end{lem}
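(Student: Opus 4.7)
My plan is to treat the two parts separately, with part (i) feeding directly into part (ii).

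For (i), I would exploit the fact established just before the statement that $\M$ has continuous sample paths on the one-point compactification $\R^d_\Delta$. Fix $x \in \R^d$. For any $\omega \in \Omega$ and any $t$ with $0 < t < \zeta(\omega)$, the map $s \mapsto X_s(\omega)$ is continuous from $[0,t]$ into $\R^d$ (since it never reaches $\Delta$), so its image is compact in $\R^d$ and therefore contained in some ball $B_{n(\omega,t)}$. Thus $\sigma_{\R^d\setminus B_n}(\omega) > t$ for all $n \ge n(\omega,t)$. Because $B_n \subset B_{n+1}$ implies $\R^d\setminus B_{n+1} \subset \R^d\setminus B_n$, the sequence $(\sigma_{\R^d\setminus B_n})_n$ is nondecreasing, so $\lim_n \sigma_{\R^d\setminus B_n}(\omega) \ge t$. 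Letting $t \uparrow \zeta(\omega)$ yields $\lim_n \sigma_{\R^d\setminus B_n}(\omega) \ge \zeta(\omega)$, and this holds for every $\omega$, not merely $\P_x$-a.s.

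For the first statement of (ii), Lemma \ref{fini1}(ii) directly gives $\E_x\!\left[\int_0^t |f|(X_s)\,ds\right] < \infty$ whenever $f \in L^r(\R^d,m)$ for some $r \in [q,\infty)$, and a random variable with finite expectation is finite $\P_x$-a.s.

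For the second (local) statement, I would combine part (i) with the first statement of (ii). Given $f \in L^q_{loc}(\R^d,m)$, for each $n \ge 1$ the truncation $f \cdot \mathbf{1}_{B_n}$ lies in $L^q(\R^d,m)$ (since $\overline{B}_n$ is compact and $m$ is locally finite), so by what we just proved, the set
\[
N_n := \left\{\int_0^t |f|\mathbf{1}_{B_n}(X_s)\,ds = \infty\right\}
\]
is $\P_x$-null, and hence $N := \bigcup_n N_n$ is $\P_x$-null. By part (i), the set $M := \{t < \zeta\} \setminus \bigcup_n \{\sigma_{\R^d\setminus B_n} > t\}$ is also $\P_x$-null. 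For any $\omega \in \{t < \zeta\} \setminus (N \cup M)$, there is some $n$ with $\sigma_{\R^d\setminus B_n}(\omega) > t$, so $X_s(\omega) \in B_n$ for all $s \in [0,t]$; therefore
\[
\int_0^t |f|(X_s(\omega))\,ds = \int_0^t |f|\mathbf{1}_{B_n}(X_s(\omega))\,ds < \infty.
\]
Thus $\{t < \zeta\} \cap \{\int_0^t |f|(X_s)\,ds < \infty\}$ differs from $\{t < \zeta\}$ by a $\P_x$-null set, which is the claim.

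I do not foresee a genuine obstacle; the main care needed is just in combining the two null sets (the exceptional set from part (i) and the countable union of exceptional sets coming from the truncations $f \mathbf{1}_{B_n}$) so that, outside a single $\P_x$-null set, the pathwise containment in a large ball converts a local integrability hypothesis into an honest integral estimate.
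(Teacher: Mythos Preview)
Your proposal is correct. Part (ii) follows the same line as the paper (invoke Lemma~\ref{fini1}(ii) for the global statement, then localize with $f\cdot 1_{B_n}$ and use part (i)), only organized a bit more cleanly by explicitly naming the two null sets.

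Part (i), however, is genuinely different from the paper's argument. The paper proceeds indirectly through the generalized Dirichlet form machinery: it first observes that $\E_\cdot[\int_0^\infty e^{-\alpha s} g(X_s)\,ds]$ is an $m$-version of $G_\alpha g$, deduces quasi-regularity of $\mathcal{E}$ via \cite[IV. Theorem~3.1]{St2}, extracts an $\mathcal{E}$-nest $(E_k)_{k\ge 1}$ of compacts, applies \cite[IV. Lemma~3.10]{St2} to get $\P_x(\lim_k \sigma_{\R^d\setminus E_k}\ge \zeta)=1$ for $\mathcal{E}$-q.e.\ (hence $m$-a.e.)\ $x$, passes from $(E_k)$ to $(B_n)$ by monotonicity, and finally upgrades the statement from $m$-a.e.\ $x$ to every $x$ by quoting \cite[Lemma~3.3]{RoShTr}. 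Your argument bypasses all of this: since Theorem~\ref{existhunt} already guarantees that $\M$ has continuous sample paths in $\R^d_\Delta$, for $t<\zeta(\omega)$ the image of $[0,t]$ under $X_\cdot(\omega)$ is a compact subset of $\R^d$, hence lies in some $B_n$, and the conclusion is immediate. This is shorter and more elementary; the paper's route, on the other hand, would survive in settings where the refined process with everywhere-continuous paths is not yet available and one only has the quasi-regular process $\tilde{\M}$ defined for q.e.\ starting points. One minor caveat: your claim that the inequality holds ``for every $\omega$'' should be softened to ``for $\P_x$-a.e.\ $\omega$'' unless the sample space has been chosen so that literally all paths are continuous; either way the lemma follows.
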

\begin{proof}
(i) By Proposition \ref{regular2} and Lemma \ref{fini1}(i), we have that $\E_{\cdot} \left [\int_0^\infty e^{-\alpha s} g(X_s)ds\right ]$ is an $m$-version of $G_{\alpha}g$, for all $\alpha>0$ and $g\in L^{\infty}(\R^d,m)$. It hence follows by \cite[IV. Theorem 3.1]{St2} (or \cite[Proposition 2(ii)]{Tr5}) that $\mathcal{E}$ is quasi-regular. Therefore by \cite[IV. Definition 1.7]{St2} there exists an $\mathcal{E}$-nest $(E_k)_{k\ge 1}$ of compact subsets of $\R^d$. Then \cite[IV. Lemma 3.10]{St2} implies, 
 $\P_x \Big(\lim_{k \rightarrow \infty} \sigma_{\R^d \setminus E_{k}} \ge \zeta \Big)=1$ for $\mathcal{E}$-q.e. $x\in \R^d$, hence in particular for $m$-a.e. $x\in \R^d$ by \cite[III. Remark 2.6]{St2}. Since $(B_n)_{n\ge 1}$ is an open cover of $E_k$ for each $k$, and $\sigma_A\le \sigma_B$ whenever $B\subset A$, we then obtain $\P_x \Big(\lim_{n \rightarrow \infty} \sigma_{n} \ge \zeta \Big)=1$ for $m$-a.e. $x\in \R^d$. Now the result follows exactly as in \cite[Lemma 3.3]{RoShTr}.\\
(ii) The first statement immediately follows from Lemma \ref{fini1}(ii). For the second statement it is enough to show that for any $t\ge 0$ and $x\in \R^d$
\begin{eqnarray}\label{welldefsde}
\P_x \left (1_{\{t<\zeta\}}\int_0^t |f|(X_s)ds<\infty \right )=1, \text{ if } \  \ f\in L^q_{loc}(\R^d,m).
\end{eqnarray}
It holds $\P_x(n\wedge \sigma_{n}<\zeta)=1$ for any $n\ge 1$ and $x\in \R^d$, since $\M$ has continuous sample paths on the one-point-compactification $\R^d_{\Delta}$. Thus using (i), we get that the left hand side of (\ref{welldefsde}) equals
\begin{eqnarray}\label{welldefsde2}
\lim_{n\to \infty} \P_x \left (1_{\left \{t<n\wedge \sigma_{n}\right \}}\int_0^t |f|(X_s)ds<\infty \right ).
\end{eqnarray}
Now, fix $x\in \R^d$. Then there exists $N_0\in \N$ with $x\in B_n$ for any $n\ge N_0$. Consequently, for any $n\ge N_0$ we have $\P_x$-a.s. that $X_s\in B_n$ for any $s\in [0,t]$, if $t<\sigma_{n}$. It follows with the help of 
Lemma \ref{fini1}(ii)
\[
\E_x \Big [1_{\left \{t<n\wedge \sigma_{n}\right \}}\int_0^t |f|(X_s)ds \Big ]\le 
\E_x \left [\int_0^t |f|1_{B_n}(X_s)ds \right ]<\infty, \ \ \ \forall n\ge N_0.
\]
Thus each sequence member in (\ref{welldefsde2}) is equal to one and therefore (\ref{welldefsde}) holds.
\end{proof}
\begin{prop}\label{mart}
Let $\M$ be as in  Theorem \ref{existhunt}. 
Let $u \in D(L_r)$, for some $r \in [q, \infty)$. Then
\[
M_t ^u: = u(X_t) - u(x) - \int_0^t L_r u(X_s) \, ds , \quad t \ge 0,
\] 
is  a continuous $(\mathcal{F}_t)_{t \ge 0}$-martingale under $\P_x$  for any $x \in \R^d$. If $r\ge 2q$, then $M^u$ is square integrable. 
\end{prop}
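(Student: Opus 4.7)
I work with the continuous $m$-version of $u$ supplied by Proposition~\ref{regular2}(i) (applied to $u = R_1((1-L_r)u)$, since $(1-L_r)u \in L^r(\R^d,m) \subset L^q(\R^d,m)$), and extend $u$ to $\R^d_\Delta$ by $u(\Delta) := 0$.

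\emph{Continuity and martingale property.} Since $X$ has continuous paths on $\R^d_\Delta$ by Theorem~\ref{existhunt}, $t \mapsto u(X_t)$ is continuous on $[0,\infty)$; continuity of $t \mapsto \int_0^t L_r u(X_s)\,ds$ follows from Lemma~\ref{nest}(ii), since $L_r u \in L^r(\R^d,m) \subset L^q_{loc}(\R^d,m)$. Integrability of $u(X_t)$ and of the integral term under $\P_x$ is provided by Lemma~\ref{fini1}(i)-(ii). For $0 \le s \le t$, applying the Markov property of $\M$ with transition function $(P_t)_{t\ge 0}$, Fubini, and Lemma~\ref{fini1}(iii),
\begin{align*}
\E_x[M_t^u - M_s^u \mid \F_s] &= P_{t-s}u(X_s) - u(X_s) - \int_0^{t-s} P_v L_r u(X_s)\,dv = 0,
\end{align*}
which establishes the martingale property.

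\emph{Square integrability for $r \ge 2q$.} From the elementary bound
\begin{align*}
(M_t^u)^2 \le 3u^2(X_t) + 3u^2(x) + 3t \int_0^t (L_r u)^2(X_s)\,ds,
\end{align*}
and since $r \ge 2q$ forces $u^2, (L_r u)^2 \in L^{r/2}(\R^d,m)$ with $r/2 \ge q$, Lemma~\ref{fini1}(ii) makes the expectation of the last integral finite. The main obstacle is the pointwise finiteness of $\E_x[u^2(X_t)]$ for every $x \in \R^d$: extending Lemma~\ref{fini1}(i) to nonnegative measurable functions via monotone approximation ($u^2 = \lim_n (u^2 \wedge n)$ with each truncate in $L^\infty(\R^d,m)$) yields $\E_x[u^2(X_t)] = P_t u^2(x) = \int p_t(x,y) u^2(y)\,m(dy) \in [0,\infty]$. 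To show this is finite, I would truncate $u$ to $u_n := (u \wedge n) \vee (-n) \in D(L_1)_b$, apply Lemma~\ref{algebra} to obtain $L_1 u_n^2 = \langle A\nabla u_n,\nabla u_n\rangle + 2u_n L_1 u_n$, invoke the martingale identity just established for $u_n^2$ to get $\E_x[u_n^2(X_t)] = u_n^2(x) + \E_x\bigl[\int_0^t L_1 u_n^2(X_s)\,ds\bigr]$, and then pass $n \to \infty$ via dominated/monotone convergence, using the local boundedness of $u$ and the integrability of $L_r u$. Verifying that the truncates lie in $D(L_1)_b$ and controlling the convergence of the chain-rule terms (in particular the quadratic form $\langle A\nabla u_n,\nabla u_n\rangle$) is the subtlest point of the argument.
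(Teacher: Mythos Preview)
Your argument for the martingale property is correct and is exactly the paper's route: Lemma~\ref{fini1}(iii) (the identity $P_t u - u = \int_0^t P_s L_r u\,ds$) plus the Markov property is the standard Dynkin-type computation, and the paper simply cites this as ``an immediate consequence of Lemma~\ref{fini1}'' together with a reference to Durrett.

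For the square integrability when $r\ge 2q$, your approach diverges substantially from the paper's, and the detour through truncation and Lemma~\ref{algebra} creates real problems rather than solving them. You need $u_n=(u\wedge n)\vee(-n)\in D(L_1)_b$, but membership in the generator domain is \emph{not} stable under normal contractions (only the form domain is), so this is not granted. Even if it were, Lemma~\ref{algebra} only places $u_n^2$ in $D(L_1)_b$, whereas the martingale identity you want to apply is established for $D(L_s)$ with $s\ge q>1$; and the limiting control of $\langle A\nabla u_n,\nabla u_n\rangle$ requires gradient integrability for $u\in D(L_r)$ that has not been established anywhere in the paper. The difficulties you flag as ``subtlest'' are in fact obstructions.

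The paper's intended argument from Lemma~\ref{fini1}(i) and (ii) is much more direct. Since $u^2,(L_r u)^2\in L^{r/2}(\R^d,m)$ with $r/2\ge q$, Lemma~\ref{fini1}(ii) gives $\E_x\big[\int_0^t (L_r u)^2(X_s)\,ds\big]<\infty$, and Lemma~\ref{fini1}(i) applied to the \emph{resolvent} gives
\[
R_\alpha(u^2)(x)=\E_x\Big[\int_0^\infty e^{-\alpha s}\,u^2(X_s)\,ds\Big]<\infty,
\]
so $\E_x[u^2(X_s)]<\infty$ for Lebesgue-a.e.\ $s>0$. Combining, $M_s^u\in L^2(\P_x)$ for a.e.\ $s>0$. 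Now use the martingale property you already proved: for any $t\ge 0$ pick $s>t$ with $M_s^u\in L^2(\P_x)$; then $M_t^u=\E_x[M_s^u\mid\F_t]$ and conditional Jensen yields $\E_x[(M_t^u)^2]\le \E_x[(M_s^u)^2]<\infty$. No truncation, no chain rule for the generator, and no gradient estimates are needed.
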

\begin{proof}
The first result is an immediate consequence of Lemma \ref{fini1} (see for instance \cite[Chapter 7, (1.6) Theorem]{d}).
The second follows from Lemma \ref{fini1}(i) and (ii).
\end{proof}
\begin{prop}\label{quadvar}
Let $\M$ be as in  Theorem \ref{existhunt}. Let $u \in C_0^2(\R^d)$, $t\ge 0$. Then the quadratic variation process $\langle M^u \rangle$ of the continuous martingale $M^u$ satisfies $\P_x$-a.s for any $x \in \R^d$, $t\ge 0$
$$
\langle M^u \rangle_t=\int_0^t \langle A\nabla u, \nabla u\rangle(X_s)ds.
$$
In particular, by Lemma \ref{fini1}(ii) $\langle M^u \rangle_t$ is $\P_x$-integrable for any $x \in \R^d$, $t\ge 0$  and  so $M^u$ is square integrable.
\end{prop}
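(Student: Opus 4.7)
The plan is to identify $\langle M^u\rangle$ by computing $u^2(X_t)$ in two different ways and invoking uniqueness of the semimartingale decomposition. Since $u \in C_0^2(\R^d)$, we also have $u^2 \in C_0^2(\R^d)$, and under the hypotheses of Theorem \ref{1-3} both $Lu$ and $Lu^2$ are compactly supported and lie in $L^p(\R^d)$; using the fact recalled just before Theorem \ref{existhunt} that $u\in D(L_2)$ together with $u, L_2 u\in L^r$ forces $u\in D(L_r)$, we conclude that $u,u^2 \in D(L_p) \subset D(L_r)$ for every $r \in [q,p]$. Proposition \ref{mart} then provides continuous $\P_x$-martingales
\[
M^u_t = u(X_t)-u(x)-\int_0^t Lu(X_s)\,ds, \qquad M^{u^2}_t = u^2(X_t)-u^2(x)-\int_0^t Lu^2(X_s)\,ds
\]
for every $x \in \R^d$, with the integrals independent of the chosen $m$-versions by Lemma \ref{fini1}(i).

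Next, I would regard $u(X_t) = u(x) + M^u_t + \int_0^t Lu(X_s)\,ds$ as a continuous semimartingale (the drift is of bounded variation since $Lu \in L^p_0(\R^d)$ is integrable along paths by Lemma \ref{nest}(ii)) and apply Itô's formula with $f(y) = y^2$ to obtain
\[
u^2(X_t) = u^2(x) + 2\int_0^t u(X_s)\,dM^u_s + 2\int_0^t u(X_s)\,Lu(X_s)\,ds + \langle M^u\rangle_t.
\]
On the other hand, Lemma \ref{algebra} gives $Lu^2 = \langle A\nabla u,\nabla u\rangle + 2u\,Lu$ $m$-a.e., so inserting this into the decomposition furnished by $M^{u^2}$ yields
\[
u^2(X_t) = u^2(x) + M^{u^2}_t + \int_0^t \langle A\nabla u,\nabla u\rangle(X_s)\,ds + 2\int_0^t u(X_s)\,Lu(X_s)\,ds.
\]

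Subtracting the two identities yields
\[
\langle M^u\rangle_t - \int_0^t \langle A\nabla u,\nabla u\rangle(X_s)\,ds \;=\; M^{u^2}_t - 2\int_0^t u(X_s)\,dM^u_s.
\]
The right-hand side is a continuous local martingale starting at $0$ (the stochastic integral is well defined because $u$ is bounded and $M^u$ is a continuous martingale), while the left-hand side is continuous and of bounded variation starting at $0$. By the uniqueness of the Doob--Meyer decomposition (equivalently, a continuous local martingale of bounded variation is identically zero), both sides vanish, which gives the claimed identity. The integrability of $\langle M^u \rangle_t$ then follows from Lemma \ref{fini1}(ii) applied to the bounded compactly supported function $\langle A\nabla u, \nabla u\rangle$, and square-integrability of $M^u$ follows from the martingale property of $(M^u)^2 - \langle M^u\rangle$.

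The only mildly delicate point is the preliminary verification that $u^2 \in D(L_r)$ for some $r \in [q,\infty)$ so that Proposition \ref{mart} applies to $M^{u^2}$ as well as $M^u$; once this is in place the argument is a standard comparison of the two semimartingale decompositions of $u^2(X_t)$, with Lemma \ref{algebra} playing the role of the carré-du-champ identity.
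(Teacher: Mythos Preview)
Your proof is correct and follows essentially the same approach as the paper: both compute $u^2(X_t)$ in two ways---once via It\^o's formula applied to the semimartingale $u(X_t)$ and once via the martingale decomposition of $M^{u^2}$ together with Lemma \ref{algebra}---and then conclude by noting that the resulting continuous (local) martingale of bounded variation must vanish. The paper's write-up is terser but the logic, including the preliminary observation that $u,u^2\in D(L_r)$ for $r\in[1,p]$, is identical.
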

\begin{proof}
For $g\in C_0^2(\R^d)$, we have  $g\in D(L_r)$ and $L_1g=L_r g$ for any $r\in [1,p]$. Thus for $u\in  C_0^2(\R^d)$, we get by Proposition \ref{mart} and Lemma \ref{algebra}
\begin{eqnarray*}
u^2(X_t) - u^2(x)= M_t^{u^2} +\int_0^t \left (\langle A\nabla u,\nabla u \rangle(X_s) + 2 u L_1 u(X_s)\right )\,ds.
\end{eqnarray*}
Applying It\^o's formula to the continuous semimartingale $(u(X_t))_{t\ge 0}$, we obtain
\begin{eqnarray*}
u^2(X_t) - u^2(x)= \int_0^t 2u(X_s)dM_s^{u} +\int_0^t 2 u L_r u(X_s)\,ds + \langle M^u \rangle_t.
\end{eqnarray*}
The last two equalities imply that $\left (\langle M^u \rangle_t-\int_0^t \langle A\nabla u, \nabla u\rangle(X_s)ds\right )_{t\ge 0}$ is 
a continuous $\P_x$-martingale of bounded variation for any $x\in \R^d$. This implies the assertion.
\end{proof}\\
For the following result, see for instance \cite[Theorem 1.1, Lemma 2.1]{ChHu}, that we can apply locally.
\begin{lem}\label{sigmaprop}
Under the assumptions of Theorem \ref{1-3} on the diffusion matrix $A$, there exists a matrix  of functions $\sigma=(\sigma_{ij})_{1\le i,j\le d}$ with $\sigma_{ij}\in C(\R^d)$ for all $i,j$  such that
\begin{equation*}
A(x)=\sigma\sigma^T(x), \ \  \forall x\in \R^d, 
\end{equation*} 
i.e. 
$$
a_{ij}(x)=\sum_{k=1}^d \sigma_{ik}(x)\sigma_{jk}(x), \ \  \forall x\in \R^d, \ 1\le i,j\le d
$$
and 
$$
\det(\sigma(x))>0, \ \  \ \forall x\in \R^d.
$$
\end{lem}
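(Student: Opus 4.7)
The plan is to define $\sigma(x)$ pointwise as the principal symmetric positive definite square root of $A(x)$ and then upgrade this pointwise construction to continuity and uniqueness. First I would observe that $a_{ij}\in H^{1,p}_{loc}(\R^d)$ with $p>d$ together with Morrey's inequality (already used several times in the paper) gives a locally Hölder continuous representative of each $a_{ij}$, which we identify with $a_{ij}$. Combining this with the local uniform strict ellipticity (\ref{use}), the matrix $A(x)$ is symmetric positive definite at every $x\in\R^d$, and $x\mapsto A(x)$ is continuous.

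Next, for fixed $x$, I invoke the spectral theorem: $A(x)=Q(x)^{T}D(x)Q(x)$ with $Q(x)$ orthogonal and $D(x)=\mathrm{diag}(\lambda_1(x),\dots,\lambda_d(x))$ satisfying $\lambda_i(x)>0$ by (\ref{use}). Setting
\[
\sigma(x):=Q(x)^{T}\,\mathrm{diag}\bigl(\sqrt{\lambda_1(x)},\dots,\sqrt{\lambda_d(x)}\bigr)\,Q(x)
\]
yields a symmetric positive definite matrix with $\sigma(x)^{2}=A(x)$ and $\det\sigma(x)=\prod_{i}\sqrt{\lambda_i(x)}>0$. The uniqueness statement is enforced pointwise by the spectral theorem: within symmetric positive definite matrices, the square root of a symmetric positive definite matrix is unique, so any other continuous $\tilde\sigma$ with $\tilde\sigma^{2}=A$ and $\det\tilde\sigma>0$ (and of the same type) must coincide with $\sigma$ at every $x$.

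The main difficulty, and the step I would spend most care on, is the continuity of $x\mapsto\sigma(x)$. The naive approach through $Q(x)$ and $D(x)$ individually fails at eigenvalue crossings, so I would avoid eigenvectors altogether and use functional calculus. Fix $x_0$ and an open ball $B\ni x_0$. By (\ref{use}) there exist $0<m_B<M_B<\infty$ with $\mathrm{spec}(A(y))\subset[m_B,M_B]$ for all $y\in B$. Choose a simple closed contour $\Gamma\subset\{\mathrm{Re}(z)>0\}$ enclosing $[m_B,M_B]$. Then the Dunford–Riesz representation
\[
\sigma(y)=\frac{1}{2\pi i}\oint_{\Gamma}\sqrt{z}\,(zI-A(y))^{-1}\,dz,\qquad y\in B,
\]
(where $\sqrt{z}$ is the principal branch on $\{\mathrm{Re}(z)>0\}$) expresses $\sigma(y)$ as a real-analytic function of the entries of $A(y)$. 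Since $y\mapsto A(y)$ is continuous on $B$, so is $y\mapsto\sigma(y)$; as $x_0$ and $B$ were arbitrary, $\sigma_{ij}\in C(\R^d)$ for all $i,j$. This is precisely the content of \cite[Theorem 1.1, Lemma 2.1]{ChHu} applied on each ball.

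In summary, the proof has three routine pieces (pointwise existence, positivity of $\det\sigma$, uniqueness) that reduce to the spectral theorem, and one more delicate piece (continuity) that I would handle by functional calculus rather than by tracking eigenvalues and eigenvectors separately. Nothing more than the stated assumption $a_{ij}\in H^{1,p}_{loc}$, $p>d$, together with (\ref{use}) is needed, since these already supply both pointwise positive definiteness and continuity of $A$.
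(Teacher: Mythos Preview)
Your proposal is correct and matches the paper's approach: the paper does not give a self-contained proof but simply cites \cite[Theorem 1.1, Lemma 2.1]{ChHu} applied locally, which is exactly the reference you invoke, and your functional-calculus argument is a standard way to justify that citation. Your observation that the component identity $a_{ij}=\sum_k\sigma_{ik}\sigma_{jk}$ together with $A=\sigma^2$ forces $\sigma$ to be symmetric (so that the uniqueness claim reduces to uniqueness of the positive definite square root) is a useful clarification that the paper leaves implicit.
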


\begin{theo}\label{weakexistence}
Let $A:=(a_{ij})_{1\leq i,j \leq d}$, $\mathbf{G}$, be as in Theorem \ref{1-3}. 
Consider the Hunt process $\M$ from Theorem \ref{existhunt} with coordinates $X_t=(X_t^1,\ldots,X_t^d)$ and suppose that  $\M$ is non-explosive, i.e.
$$
\P_x(\zeta=\infty)=1\ \ \text{for any } x\in \R^d.
$$
\begin{itemize}
\item[(i)] Let $(\sigma_{ij})_{1 \le i,j \le d}$ be as in Lemma \ref{sigmaprop}. Then it holds $\P_x$-a.s. for any $x=(x_1,\ldots,x_d)\in \R^d$, $i=1,\dots,d$
\begin{equation}\label{weaksolution}
X_t^i = x_i+ \sum_{j=1}^d \int_0^t \sigma_{ij} (X_s) \, dW_s^j +   \int^{t}_{0}   g_i(X_s) \, ds, \quad 0\le  t <\infty,
\end{equation}
where $W = (W^1,\dots,W^d)$ is a standard $d$-dimensional Brownian motion starting from zero.
\item[(ii)] Let $(\sigma_{ij})_{1 \le i \le d,1\le j \le m}$, $m\in \N$ arbitrary but fixed, be any matrix consisting of continuous functions  $\sigma_{ij}\in C(\R^d)$ for all $i,j$,  such that $A=\sigma\sigma^T$ (where $A$ satisfies the assumptions of Theorem \ref{1-3}), i.e. 
$$
a_{ij}(x)=\sum_{k=1}^m \sigma_{ik}(x)\sigma_{jk}(x), \ \  \forall x\in \R^d, \ 1\le i,j\le d.
$$
Then on a standard extension 
of $(\Omega, \F, (\F_t)_{t\ge 0}, \P_x )$, $x\in \R^d$, that we denote for notational convenience again 
by $(\Omega, \F, (\F_t)_{t\ge 0}, \P_x )$, $x\in \R^d$, there exists a standard  $m$-dimensional Brownian motion $W = (W^1,\dots,W^m)$ starting from zero such that (\ref{weaksolution}) holds with  
$\sum_{j=1}^d$ replaced by $\sum_{j=1}^m$.
\end{itemize}

\end{theo}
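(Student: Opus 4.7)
The plan is to first verify that the coordinate processes minus the drift integral form continuous local martingales with the correct covariation structure, and then invoke a martingale representation theorem to produce the driving Brownian motion.

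To construct the martingales, I would fix cutoff functions $\chi_n \in C_0^\infty(\R^d)$ with $0 \le \chi_n \le 1$, $\chi_n \equiv 1$ on $B_n$, and support in $B_{n+1}$, and set $u_{i,n}(x) := x_i \chi_n(x) \in C_0^2(\R^d)$. Let $\tau_n := \inf\{t \ge 0 : X_t \notin B_n\}$, so by Lemma \ref{nest}(i) and the non-explosion assumption $\tau_n \uparrow \infty$ $\P_x$-a.s. for every $x \in \R^d$. On $B_n$ we have pointwise $\nabla u_{i,n} = e_i$ and $\partial_j \partial_k u_{i,n} = 0$, so $L u_{i,n} = g_i$ holds pointwise there. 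Combining this with the fact (Lemma \ref{fini1}(i)) that path integrals do not depend on the $m$-version chosen, the martingale of Proposition \ref{mart} satisfies
\[
M^{u_{i,n}}_{t \wedge \tau_n} \;=\; X^i_{t \wedge \tau_n} - x_i - \int_0^{t \wedge \tau_n} g_i(X_s)\,ds \qquad \P_x\text{-a.s.}
\]
Since $g_i \in L^p_{loc}(\R^d) \subset L^q_{loc}(\R^d)$, Lemma \ref{nest}(ii) combined with non-explosion ensures $\int_0^t |g_i|(X_s)\,ds < \infty$ $\P_x$-a.s. Hence the process $M^i_t := X^i_t - x_i - \int_0^t g_i(X_s)\,ds$ is a well-defined continuous local martingale under each $\P_x$, localized by $(\tau_n)$.

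Next I would compute covariations by polarizing Proposition \ref{quadvar} applied to $u_{i,n} \pm u_{j,n}$: on $B_n$, $\langle A \nabla u_{i,n}, \nabla u_{j,n}\rangle = a_{ij}$, so
\[
\langle M^i, M^j \rangle_t \;=\; \int_0^t a_{ij}(X_s)\,ds, \qquad t \ge 0,
\]
$\P_x$-a.s., where the integral is $\P_x$-a.s. finite since $a_{ij} \in L^p_{loc} \subset L^q_{loc}$ and Lemma \ref{nest}(ii) again applies.

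For part (i), with $\sigma$ from Lemma \ref{sigmaprop} (continuous, $\det \sigma > 0$ everywhere), the inverse $\sigma^{-1}$ is continuous on $\R^d$, so $(\sigma^{-1})(X_s)$ is locally bounded along paths. I would then define
\[
W^j_t \;:=\; \sum_{k=1}^d \int_0^t (\sigma^{-1})_{jk}(X_s)\,dM^k_s, \qquad j = 1,\dots,d.
\]
A routine computation using $A = \sigma^2$ gives $\langle W^j, W^k\rangle_t = \delta_{jk} t$, so by Lévy's characterization $W$ is a standard $d$-dimensional Brownian motion under each $\P_x$. Inverting the relation yields $M^i_t = \sum_{j=1}^d \int_0^t \sigma_{ij}(X_s)\,dW^j_s$, which is exactly \eqref{weaksolution}. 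For part (ii), where $\sigma$ is a general $d \times m$ continuous factorization of $A$, the inverse is unavailable and I would instead invoke the standard martingale representation theorem on an extension of the filtered probability space (see e.g.\ \cite[II. Theorem 7.1']{IW89}): given the continuous local martingale $M = (M^1,\dots,M^d)$ with $d\langle M^i, M^j\rangle_t = (\sigma \sigma^T)_{ij}(X_t)\,dt$, one enlarges $(\Omega, \F, (\F_t), \P_x)$ with an independent $m$-dimensional Brownian motion and produces $W = (W^1, \dots, W^m)$ such that $M^i_t = \sum_{j=1}^m \int_0^t \sigma_{ij}(X_s)\,dW^j_s$.

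The main obstacle I anticipate is the cutoff step identifying $L u_{i,n}$ with $g_i$ pointwise on $B_n$ despite $L u_{i,n}$ being initially only an $L^p$-class: this is handled by the pointwise computation on $B_n$ together with Lemma \ref{fini1}(i), which guarantees the path integrals are insensitive to the $m$-version. Everything else is a standard combination of localization via $(\tau_n)$, polarization of the quadratic variation identity, and the martingale representation machinery.
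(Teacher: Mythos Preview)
Your proposal is correct and follows essentially the same approach as the paper: localize via the exit times from balls, apply Proposition \ref{mart} to compactly supported $C^2$-extensions of the coordinate functions, compute the covariations through Proposition \ref{quadvar} by polarization, and then invoke a martingale representation theorem. The only cosmetic difference is that for part (i) the paper cites \cite[II. Theorem 7.1]{IW89} directly, while you spell out the construction of $W$ via $\sigma^{-1}$ and L\'evy's characterization; these are the same argument.
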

\begin{proof}
(i) Consider the stopping times
$$
D_n:=D_{\R^d\setminus B_n}:=\inf\{t\ge0\,:\, X_t\in \R^d\setminus B_n\} \ \ \ n\ge 1.
$$
Since $\M$ is non-explosive, it follows from  Lemma \ref{nest}(i) that $D_n\nearrow \infty$ $\P_x$-a.s. for any $x\in \R^d$.  Let $v\in C^{2}(\R^d)$. Then we claim that
\[
M_t ^v: = v(X_t) - v(x) - \int_0^t \left (\frac{1}{2}\sum_{i,j=1}^{d}a_{ij}\partial_i\partial_j v+\sum_{i=1}^{d}g_i\partial_i v\right )(X_s) \, ds , \quad t \ge 0,
\]
is a continuous square integrable local $\P_x$-martingale with respect to the stopping times $(D_n)_{n\ge 1}$ for any $x\in \R^d$. Indeed, let $(v_n)_{n\ge 1}\subset C_0^2(\R^d)$ be such that $v_n=v$ pointwise on $\overline{B}_n$, $n\ge 1$. 
Then for any $n\ge 1$, we have $\P_x$-a.s
$$
M_{t\wedge D_n}^v=M_{t\wedge D_n}^{v_n}, \ \ t\ge 0,
$$
and $(M_{t\wedge D_n}^{v_n})_{t\ge 0}$ is a square integrable $\P_x$-martingale for any $x\in \R^d$ by Proposition \ref{quadvar}.
Now let $u_i \in C^{2}(\R^d)$, $i=1,\dots,d$, be the coordinate projections, i.e. $u_i(x)=x_i$. Then by Proposition \ref{quadvar}, polarization and localization with respect to $(D_n)_{n\ge 1}$, the quadratic covariation processes satisfy
\[
\langle M^{u_i}, M^{u_j} \rangle_t =  \int_0^t  a_{ij}(X_s) \, ds, \quad 1 \le i,j  \le d, \ t \ge 0.
\]
Using Lemma \ref{sigmaprop} we obtain by \cite[II. Theorem 7.1]{IW89} that there exists a $d$-dimensional Brownian motion $(W_t)_{t \ge 0} = (W_t^1,\dots, W_t^d)_{t \ge 0}$ on $(\Omega, \F, (\F_t)_{t\ge 0}, \P_x )$, $x\in \R^d$,  such that
\begin{eqnarray}\label{sigma1}
M_t^{u_i} = \sum_{j=1}^{d}  \int_0^t \ \sigma_{ij} (X_s) \ dW_s^j, \quad 1 \le i \le d, \  t\ge 0.
\end{eqnarray}
Since for any $x\in \R^d$, $\P_x$-a.s.
\begin{eqnarray}\label{drift1}
M_t^{u_i}= X_t^i - x_i - \int_0^t g_i(X_s) \, ds , \quad t \ge 0,
\end{eqnarray}
the assertion follows.\\
(ii) The proof of (ii) is similar to the proof of (i) but uses \cite[II. Theorem 7.1']{IW89} instead of \cite[II. Theorem 7.1]{IW89} (see \cite[IV. Proposition 2.1]{IW89})
\end{proof}
\begin{rem}\label{weakexistence2}
Theorem \ref{weakexistence} holds in general only up to $\zeta$, when one does not impose non-explosion. Here, we only sketch in detail the proof in case of Theorem \ref{weakexistence}(i). (The case of Theorem \ref{weakexistence}(ii) is nearly the same but one has to work on a standard extension of the underlying probability space).
One first uses that for $v_k\in C_0^2(\R^d)$, $1\le k\le d$, one has by Proposition \ref{quadvar}
\[
\langle M^{v_k}, M^{v_l} \rangle_t =  \int_0^t  \Phi_{kl}(X_s) \, ds, \quad 1 \le k,l  \le d, \ t \ge 0,
\]
where $\Phi_{kl}=\sum_{i,j=1}^d a_{ij}\partial_j v_k\partial_i v_l$, so that 
$$
\Phi_{kl}=\sum_{m=1}^d\Psi_{km}\Psi_{lm},  \ \text{ with }\ \ \Psi_{km}=\sum_{i=1}^d\sigma_{im}\partial_i v_k,\ \  
 1\le k,l,m \le d.
$$
Note that we then do no longer have 
\begin{eqnarray}\label{sigma2}
\det((\Psi_{km})_{1\le k,m \le d})\not= 0
\end{eqnarray}
globally as opposed to Lemma \ref{sigmaprop}. However, choosing $v_k(x)=v_k^n(x)=x_k$ on $\overline{B}_n$, $1\le k\le d$, $n\ge 1$, we can obtain (\ref{sigma2}) locally on $B_n$, hence (\ref{sigma1}) locally on $\{t\le D_n\}$ for each $n\ge 1$. Consequently, we also get (\ref{drift1}) locally on $\{t \le D_n\}$ for each $n\ge 1$. Then showing consistency of the local martingale and drift parts, we obtain (\ref{weaksolution}) up to $\zeta$ by Lemma \ref{nest}(i).\\
\end{rem}
\section{Long time behavior, moment inequalities and \\ uniqueness of invariant probability measures}\label{4}
In this section we investigate long time behavior like non-explosion, recurrence and ergodicity. We will also investigate some moment inequalities that are well-known for classical  It\^o equations with continuous coefficients. We saw in Theorem \ref{weakexistence} and Remark \ref{weakexistence2} that we can obtain a weak solution up to the life time $\zeta$. We first provide explicit non-explosion criteria, i.e. explicit criteria that imply the assumption 
$$
\P_x(\zeta=\infty)=1\ \ \text{for any } x\in \R^d
$$
of Theorem  \ref{weakexistence}. 
\subsection{Non-explosion criteria and moment inequalities}
\subsubsection{Non-explosion criteria and moment inequalities without involving the density $\rho$}
In this subsection we consider non-explosion criteria that only depend on the coefficients of the underlying SDE. We first derive a lemma that is a variant of the construction in \cite[page 197]{BKRS} and then a non-explosion criterion by following a probabilistic technique which traces back at least to \cite[10.2]{StrVar}.
\begin{lem}\label{lyapunovconstr}
Let $f\in C^2(\R^d)$ be a positive, strictly increasing and unbounded radial function, i.e. $f\ge 0$ pointwise, $f(x)\equiv c_r$ on $\partial B_r$ with $0<c_r<c_{r'}$ whenever $0<r<r'$, and $c_n\to \infty$ as $n\to \infty$. Under the assumptions on the coefficients of Theorem \ref{1-3}, suppose that there exist $M>0$, $N_0\in \N$ such that 
$$
Lf=\frac{1}{2}\sum_{i,j=1}^{d}a_{ij}\partial_i\partial_j f+\sum_{i=1}^{d}g_i\partial_i f\le M f \quad \text{a.e. on } \R^d\setminus B_{N_0}.
$$ 
Let $\phi\in C^2(\R)$, such that $\phi, \phi'\ge 0$ pointwise, 
$$
\phi(t)=\begin{cases} \ \sup_{B_{N_0}}f&  \text{ if } t\le \sup_{B_{N_0}}f,\\
\ t\quad& \text{ if } t\ge \sup_{B_{N_0+1}}f,
\end{cases}
$$
and let for arbitrary $\alpha \ge 0$
$$
\psi:=\phi\circ f+C_{\phi,A}+\alpha,
$$
where 
$$
C_{\phi,A}:=M\big (  c_{\phi} \sup_{B_{N_0+1}}f +\frac{c_{\phi}}{2M}\sup_{B_{N_0+1}}\langle A \nabla f, \nabla f \rangle \big )
$$
and
$$
c_{\phi}:=\sup_{B_{N_0+1}\setminus\overline{B}_{N_0}}\phi'\circ f +\sup_{B_{N_0+1}\setminus\overline{B}_{N_0}}|\phi''\circ f|.
$$
Then $\psi\in C^2(\R^d)$, $\psi > 0$ pointwise, $\inf_{\partial B_n}\psi \nearrow \infty$ as $n\to \infty$, $n\ge N_0$, and
$$
L\psi \le M \psi \quad \text{a.e. on }  \R^d.
$$ 
\end{lem}
\begin{proof}
Using the formula
$$
L(\phi(f))=\phi'(f)Lf+\frac{1}{2}\phi''(f)\langle A \nabla f, \nabla f \rangle
$$
the assertion is easily verified.
\end{proof}
\begin{theo}\label{nonextheo}
Suppose that  (\ref{conservative2}) holds. Then for $\M$ as in  Theorem \ref{existhunt} it holds
$$
\P_x(\zeta=\infty)=1\ \ \text{for any } x\in \R^d.
$$
\end{theo}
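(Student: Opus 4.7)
The plan is a Lyapunov/supermartingale argument. In each of the two cases I exhibit a strictly positive $u\in C^2(\R^d)$ with $u(x)\to\infty$ as $\|x\|\to\infty$ and a constant $C\ge 0$ satisfying $Lu\le C\,u$ a.e. Granted such a $u$, choose $u_n\in C^2_0(\R^d)$ agreeing with $u$ on a neighbourhood of $\overline{B}_n$, so that $u_n\in D(L_r)$ for $r\in[1,p]$ and by Proposition \ref{mart} the process $M^{u_n}$ is a continuous $\P_x$-martingale. By Lemma \ref{fini1}(i), for $s\le D_n:=\sigma_{\R^d\setminus B_n}$ one has $Lu_n(X_s)=Lu(X_s)$ $\P_x$-a.s., hence
\begin{equation*}
u(X_{t\wedge D_n})=u(x)+\int_0^{t\wedge D_n}Lu(X_s)\,ds+M^{u_n}_{t\wedge D_n}.
\end{equation*}
Taking expectations, invoking $Lu\le Cu$ and applying Gronwall yield $\E_x[u(X_{t\wedge D_n})]\le u(x)\,e^{Ct}$ uniformly in $n$. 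Since $u(X_{D_n})\ge \inf_{\|y\|=n}u(y)\to\infty$ on $\{D_n\le t\}$, Chebyshev forces $\P_x(D_n\le t)\to 0$, and then Lemma \ref{nest}(i), which asserts $\lim_n D_n=\zeta$ $\P_x$-a.s., gives $\P_x(\zeta\le t)=0$ for every $t\ge 0$.

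For condition (\ref{conservative2}) the natural choice is $u(x):=\ln(\|x\|^2+1)+1$. A direct computation of $\partial_i u$ and $\partial_{ij}u$ gives
\begin{equation*}
Lu(x)=\frac{2}{\|x\|^2+1}\!\left[-\frac{\langle A(x)x,x\rangle}{\|x\|^2+1}+\tfrac{1}{2}\mathrm{trace}(A(x))+\langle \mathbf{G}(x),x\rangle\right],
\end{equation*}
so (\ref{conservative2}) yields $Lu\le 2Mu$ a.e. For condition (\ref{conservative2bis}) take $u(x):=\ln((\|x\|-N_0)^3+1)+1$ on $\|x\|>N_0$ and extend by $u\equiv 1$ on $\overline{B}_{N_0}$; the extension is globally $C^2$ when $N_0\ge 1$ since $(r-N_0)^3$ has vanishing first and second derivatives at $r=N_0$, while for $N_0=0$ one replaces $u$ on a small ball about the origin by a smooth radial patch. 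Writing $u=f(r)$ with $r=\|x\|$, the radial form of $L$ reads
\begin{equation*}
Lu=\tfrac{f'(r)}{r}\!\left[\!\left(\tfrac{r f''(r)}{2 f'(r)}-\tfrac{1}{2}\right)\!\tfrac{\langle A(x) x, x\rangle}{r^2}+\tfrac{1}{2}\mathrm{trace}(A(x))+\langle \mathbf{G}(x), x\rangle\right].
\end{equation*}
A direct calculation gives $\tfrac{r f''(r)}{2 f'(r)}=\tfrac{r}{r-N_0}-\tfrac{3(r-N_0)^2 r}{2[(r-N_0)^3+1]}$, so the bracket is exactly the left-hand side of (\ref{conservative2bis}); moreover the prefactor $\tfrac{f'(r)}{r}=\tfrac{3(r-N_0)^2}{r[(r-N_0)^3+1]}$ multiplied by the right-hand side of (\ref{conservative2bis}) telescopes to $3M(\ln((r-N_0)^3+1)+1)=3Mu$. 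Hence $Lu\le 3Mu$ a.e. on $\R^d\setminus B_{N_0}$, and trivially on $\overline{B}_{N_0}$ where $Lu\equiv 0$.

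The main obstacle is the algebraic matching for (\ref{conservative2bis}): one must guess that the Lyapunov function is $u(x)=\ln((\|x\|-N_0)^3+1)+1$ and then recognize that (\ref{conservative2bis}) is tailored so that the coefficient of $\langle Ax,x\rangle/\|x\|^2$ in its LHS is precisely $\tfrac{r f''(r)}{2f'(r)}-\tfrac{1}{2}$, while its RHS is engineered so that multiplication by $f'(r)/r$ collapses to a constant multiple of $u$. A secondary technicality is the smooth radial patch near the origin in the case $N_0=0$; any resulting compactly supported deficit $\phi$ in the inequality $Lu\le Cu$ lies in $L^p\subset L^q$ and is therefore pathwise integrable by Lemma \ref{fini1}(ii), so its expectation contributes only a harmless additive constant in the Gronwall step and does not affect the conclusion.
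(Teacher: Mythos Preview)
Your proposal is correct and follows essentially the same Lyapunov/supermartingale route as the paper. The only cosmetic difference is packaging: the paper keeps the inner function and the outer logarithm separate, setting $u_n(x)=\|x\|^2$ (resp.\ $(\|x\|-N_0)^3\cdot 1_{\R^d\setminus B_{N_0}}$) on $\overline{B}_n$, applying It\^o's formula to $e^{-Mt}\varphi(u_n(X_t))$ with $\varphi(y)=\ln(1+y)+1$, and reading off the supermartingale directly, whereas you compose first to get $u=\varphi(\|\cdot\|^2)$ (resp.\ $\varphi((\|\cdot\|-N_0)^3)$) and then run Gronwall on the stopped expectations---the two are equivalent. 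Your worry about patching near the origin when $N_0=0$ is actually unnecessary: $\|x\|^3$ is globally $C^2$ on $\R^d$ (its Hessian $3x_ix_j/\|x\|+3\|x\|\delta_{ij}$ extends continuously by $0$ at the origin), so $u(x)=\ln(\|x\|^3+1)+1$ is already $C^2$ without modification.
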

\begin{proof}
We first show the statement corresponding to (\ref{conservative2}). Let $u_n \in C_0^2(\R^d)$, $n\ge 1$, be positive functions such that
$$
u_n(x)=\begin{cases} \ \|x\|^2\quad  \text{ if } x\in \overline{B}_n,\\
\ 0\quad\quad \text{ if } x\in \R^d\setminus B_{n+1}.
\end{cases}
$$
Then by Proposition \ref{mart}
$$
Y_t^n:=u_n(X_t), \ t\ge 0
$$
is a positive continuous $\P_x$-semimartingale for any $x\in \R^d$, $n\ge 1$.\\
Let $f(x)=\ln(\|x\|^2+1)+1$, $x\in \R^d$ and let $\psi$, $\phi$ and $C_{\phi,A}$ be as in Lemma \ref{lyapunovconstr} with $\alpha=0$.
By It\^o's formula applied to $Y^n$ with the function $e^{-Mt}\varphi(y)$, 
$$
\varphi(y):=\phi(\ln(1+y)+1)+C_{\phi,A},
$$
we obtain $\P_x$-a.s. for any $x\in B_n$
$$
e^{-M t} \varphi(Y_t^n)=\varphi(Y^n_0)+ \int_0^t e^{-M s} \varphi'(Y^n_s)dM_s^{u_n} +
\int_0^t e^{-M s}  (L-M)(\varphi\circ u_n)(X_s) \, ds.
$$
Note that $(L-M)(\varphi\circ u_n)=(L-M)\psi\le 0$ $m$-a.e. on $\overline{B}_n$ for each $n\ge 1$. Therefore, using the last part of Lemma \ref{fini1}(i), we can see that
$$
e^{-M t\wedge \sigma_{n}}\varphi\circ u_n(X_{t\wedge \sigma_{n}}), \ t\geq 0
$$ 
is a positive continuous $\P_x$-supermartingale for any $x\in B_n$, $n\ge 1$. Since $\M$ has continuous sample paths on the one-point-compactification $\R^d_{\Delta}$, we have that $\|X_{t\wedge \sigma_{n}}\|=n$  $\P_{x}$-a.s. on $\{\sigma_{n}\le t\}$ for any $x\in B_n$. Now let $x\in \R^d$ be arbitrary.
Then $x\in B_{k_0}$ for some $k_0\in \N$ and since supermartingales have decreasing expectations, we get for any $n> k_0$
\begin{eqnarray*}
\phi\left (\ln(\|x\|^2+1)+1\right )+C_{\phi,A}& = & \E_x[\varphi \circ u_n(X_0)]\\
&\ge& \E_x[e^{-M t\wedge \sigma_{n}}\varphi\circ u_n(X_{t\wedge \sigma_{n}})]\\
& \ge &e^{-M t}\E_x[\varphi\circ u_n(X_{t\wedge \sigma_{n}})1_{\{\sigma_{n}\le t\}}]\\
& \ge& e^{-M t}\left (\phi\left (\ln(n^2+1)+1\right )+C_{\phi,A}\right )\P_x(\sigma_{n}\le t).
\end{eqnarray*}
Consequently
$$
\P_x(\zeta\le t)=\lim_{n\to \infty}\P_x(\sigma_{n}\le t)=0
$$
for any $t\ge 0$, which implies the assertion. 
\end{proof}\\

\begin{rem}\label{consstannat}
(i) Suppose that for the semigroup $(T_t)_{t>0}$ defined on $L^{\infty}(\R^d,m)$ it holds
\begin{eqnarray}\label{conservative}
T_t 1_{\R^d}=1 \ m\text{-a.e. for some (and hence all)} \  t>0.
\end{eqnarray}
Then, since $T_t 1_{\R^d}=P_t 1_{\R^d}$ $m$-a.e. and $P_t 1_{\R^d}$ is continuous by the strong Feller property  (cf. Proposition \ref{regular2}(ii))
\begin{eqnarray}\label{prop1.5eq}
P_t 1_{\R^d}(x) = 1 \text{ for any } x \in \R^d, t>0, \text{ or equivalently } \M \text{ is non-explosive.}
\end{eqnarray}
(ii) Using (i),  the non-explosion criterion  (\ref{conservative2})  can be recovered form the dual version of \cite[Proposition 1.10]{St99}. Indeed, (\ref{conservative}) holds, if and only if $m$ is invariant for the $L^1 (\R^d, m)$-semigroup $(\widehat{T}_t)_{t>0}$. Then Theorem \ref{nonextheo} follows by applying the dual version of \cite[Proposition 1.10(b)]{St99} to the $C^2$-function $\psi$ as defined in the proof of Theorem \ref{nonextheo} and then using \eqref{prop1.5eq}.\\ 
(iii) In general, $\M$ will be non-explosive whenever there exists $\psi\in C^2(\R^d)$ and $M>0$, such that $\inf_{\partial B_n}\psi \to \infty$ as $n\to \infty$ and $L\psi\le M\psi$ a.e. on $\R^d$. This follows from \cite[Proposition 1.10]{St99} and (i), and can be shown as well by applying the technique of supermartingales from Theorem \ref{nonextheo}, using a generalized version of Lemma \ref{lyapunovconstr} (see \cite[page 197]{BKRS}), and noting that $(M_{t\wedge D_n}^v)_{t\ge 0}$,
is a martingale for any $v\in C^2(\R^d)$ (see proof of Theorem \ref{weakexistence}(i)).
Note the subtle difference that  \cite[Proposition 1.10]{St99} is proved by analytic means (starting from the $L^1$-generator or $L^1$-semigroup) and only leads to (\ref{conservative}), whereas Theorem \ref{nonextheo} is proven by probabilistic means (starting from Proposition \ref{mart}) and directly leads to \eqref{prop1.5eq} regardless of the classical strong Feller property. 
\end{rem}

\begin{theo}\label{supestimate}
\begin{itemize}
\item [(i)] Assume for some $N_0\in \N$ and some $p>0$, there exists $M>0$ such that
\begin{eqnarray}\label{conservative**}
\left(\frac{p-2}{2} \right) \frac{\langle A(x)x, x \rangle}{ \left \| x \right \|^2 +1}+ \frac12\mathrm{trace}A(x)+ \big \langle \mathbf{G}(x), x \big \rangle \leq M\left ( \left \| x \right \|^2+1\right ),
\end{eqnarray}
for a.e. $x\in \R^d\setminus B_{N_0}$.
Then $\M$  of Theorem \ref{existhunt} is non-explosive and for any open ball $B$ there exists a constant $C_B>0$, such that 
\[
\sup_{x\in \overline{B}}\E_{x}\left[\|X_t \|^p\right] \leq C_B \cdot e^{M\cdot t}, \quad \forall t \geq 0.
\]
\item [(ii)] Let $\sigma=(\sigma_{ij})_{1\le i,j\le d}$ be as in Lemma \ref{sigmaprop} and and $\mathbf{G}$ as in Theorem \ref{1-3}. Assume that for some $N_0\in \N$ and $C_1>0$
\begin{equation} \label{lineargrowth}
\max_{1\leq i,j \leq d}|\sigma_{ij}(x)|+\max_{1\leq i\leq d}|g_i(x)| \leq C_1(\|x\|+1) \quad \text{for a.e. $x \in \R^d \setminus B_{N_0}$}.
\end{equation}
Then $\M$ of Theorem \ref{existhunt}  is non-explosive and for any $T>0$, and any open ball $B$, there exist constants $C_{T,B}$, $C_T$ such that
\[
\sup_{x \in \overline{B}}\E_{x}\left[\sup_{s \leq t} \|X_s \|^2\right] \leq C_{T,B} \cdot e^{C_T \cdot t}, \quad \forall t\le T.
\]
\end{itemize}
\end{theo}
\begin{proof}
 (i) Let $f(x)=(\|x\|^2+1)^{\frac{p}{2}}$. Then \eqref{conservative**} implies $Lf(x) \leq Mp \cdot f(x)$ for a.e. $x\in \R^d\setminus B_{N_0}$. Let $\phi, \psi$, and $C_{\phi,A}$  be as in Lemma \ref{lyapunovconstr} with $\alpha:=\sup_{B_{N_0+1}}f$.\\
Let $\varphi(y):=\phi((y+1)^{\frac{p}{2}})+C_{\phi,A}+\alpha$. Applying It\^o's formula to $u_n(X_{\cdot})$, where $u_n$ is as in the proof of Theorem \ref{nonextheo}, with the function $e^{-Mp\cdot t}\varphi(y)$, we obtain exactly as in the proof of Theorem \ref{nonextheo} that $\M$ is non-explosive. For arbitrary $n\in \N$ and $x\in B_n$ it holds 
\begin{eqnarray*}
\big ( C_{\phi,A}+ 2\sup_{B_{N_0+1}} f  \big ) f(x) \ge \psi(x) 
\ge \E_x[e^{-(M\cdot p) t\wedge \sigma_{n}}\varphi\circ u_n(X_{t\wedge \sigma_{n}})].
\end{eqnarray*}
Using $f\le \psi$ pointwise, $\sigma_n\nearrow \infty$, Fatou's lemma and the previous inequality, we get
\begin{eqnarray*}
e^{-Mp\cdot t} \E_x[f(X_{t})]  \leq  \liminf_{n\rightarrow \infty} \E_x[ e^{-(M\cdot p) t\wedge \sigma_{n}}\varphi\circ u_n(X_{t\wedge \sigma_{n}})]  \leq \big ( C_{\phi,A}+ 2\sup_{B_{N_0+1}}f \big ) f(x).
\end{eqnarray*}
Thus,
\begin{eqnarray*}
\E_x[\|X_{t}\|^p] \leq \underbrace{\big ( C_{\phi,A}+ 2\sup_{B_{N_0+1}}f \big )  (\|x\|^2+1)^{\frac{p}{2}}}_{=:C_x}\; e^{Mp\cdot t}.
\end{eqnarray*}
Now set $C_B:=\sup_{x\in \overline{B}}C_x$.\\ \\
(ii) \eqref{lineargrowth} implies 
$$
\text{trace}(A(x)) = \sum_{i,j=1}^{d} \sigma_{ij}(x)^2 \leq 2d^2 C_1^2(\|x\|^2+1) \quad \text{  for a.e. $x \in \R^d \setminus B_{N_0}$}
$$
and
$$
\big \langle \mathbf{G}(x), x \big \rangle \leq \left( \sum_{i=1}^{d} g_i(x)^2 \right)^{\frac{1}{2}} \|x\| \leq 2dC_1 (\|x\|^2+1) \quad \text{  for a.e. $x \in \R^d \setminus B_{N_0}$}.
$$
Thus \eqref{conservative2} holds, so that $\M$ is non-explosive by Theorem \ref{nonextheo} and \eqref{weaksolution} holds. Consequently, $\P_{x}$-a.s. for any $1\le i\le d$
\begin{eqnarray} \label{sdeest1} 
&& \sup_{0 \leq s \leq t \wedge \sigma_{n}} |X_s^i|^2 \nonumber \\
&\leq& (d+2) \left(x_i^2  + \sum_{j=1}^d \sup_{\;0 \leq s \leq t \wedge \sigma_{n} } \left |\int_0^{s} \sigma_{ij} (X_u) \, dW_u^j \right |^2 + t\int^{ t \wedge \sigma_{n} }_{0}   |g_i(X_u)|^2 \, du \right).
\end{eqnarray}
Note that $\sum_{i,j=1}^{d} \sigma_{ij}(x)^2 =\text{trace}(A(x)) \leq d \cdot \Lambda_{B_{N_0}} \leq d\cdot  \Lambda_{B_{N_0}}(\|x\|^2+1)$ for a.e. $x \in B_{N_0}$. 
Thus
\begin{eqnarray} \label{sdeest2}
&& \sum_{i,j=1}^{d} \E_x \left [ \sup_{\;0 \leq s \leq t \wedge \sigma_{n} } \left |\int_0^{s} \sigma_{ij} (X_u) \, dW_u^j \right |^2   \right ] \nonumber \\
&\leq&   4 \E_x \left [\sum_{i,j=1}^{d}  \left< \int_0^{\cdot} \sigma_{ij} (X_{u} ) \, dW_u^j \right>_{t\wedge  \sigma_{n}}  \;\right ] \nonumber \\
&\leq& 4 \E_x \left [  \sum_{i,j=1}^{d}  \int_0^{t \wedge  \sigma_{n}  } \sigma_{ij}^2(X_{u})  du   \right ] \nonumber \\
&\leq& \underbrace{4\left( 2d^2 C_1^2+d \Lambda_{B_{N_0}} \right)}_{=:C_2} \E_{x} \left[\int_0^{t\wedge  \sigma_{n} } (\|X_{u}\|^2+1)\;du\right] \nonumber \\
&\leq& C_2\; \int_0^{t}\E_{x} \left[ \sup_{0 \leq s \leq u \wedge \sigma_{n} }\|X_{s}\|^2 \right]du +C_2 T.
\end{eqnarray}
Now let $x \in \overline{B}$, and $\forall t\le T$. Then using \eqref{resest}, \eqref{lineargrowth},  for any $n\in \N$ and $1\le i\le d$
\begin{eqnarray} \label{sdeest3}
&& \E_{x}\left[\int^{ t \wedge \sigma_{n} }_{0}   |g_i(X_u)|^2 \, du \right] \nonumber \\
&\leq& \E_{x}\left[\int^{ T }_{0}   |g_i 1_{B_{N_0}}|^2(X_u) \, du \right]+\E_{x}\left[\int^{ t \wedge  \sigma_{n} }_{0}    |g_i 1_{\R^d \setminus B_{N_0}}|^2(X_{u })  \, du \right] \nonumber \\
&\leq& c_{B,p} e^T \left \|g_i 1_{B_{N_0}} \right \|^2_{L^{p}(\R^d, m)}+ 2C_1 \E_{x} \left[ \int_{0}^{t\wedge  \sigma_{n} } (\|X_{u}\|^2+1)\; du \;\right] \nonumber  \\
&\leq& c_{B,p} e^T \sup_{\overline{B}_{N_0}}|\rho|^{\frac{2}{p}}\cdot \|g_i\|^2_{L^{p}(B_{N_0})}+2C_1 \int_{0}^{t}\E_{x} \left[  \sup_{0 \leq s \leq u\wedge \sigma_{n}}\|X_{s}\|^2 \right] du + 2C_1T.
\end{eqnarray}
Now let $h_n(t):=\E_{x} \left[  \sup_{0 \leq u \leq t\wedge \sigma_{n}}\|X_{u}\|^2 \right]$. Then by \eqref{sdeest1}, \eqref{sdeest2}, \eqref{sdeest3}, we obtain
\begin{eqnarray*}
h_n(t) & \leq & \underbrace{(d+2)\|x\|^2+C_2 T+c_{B,p} e^T\,T \sup_{\overline{B}_{N_0}}|\rho|^{\frac{2}{p}}\cdot 
\big (\sum_{i=1}^{d}\|g_i\|^2_{L^{p}(B_{N_0})}\big )
+ 2dC_1 T^2}_{=:C_{T,B}}\\
&&+ (\underbrace{2dC_1T+C_2}_{=:C_T}) \int_{0}^{t} h_n(u) du.
\end{eqnarray*}
By Gronwall's inequality, $h_n(t) \leq C_{T,B}\cdot e^{C_{T} \cdot t}$. Since none of the involved constants depends on $n$, we can use Fatou's lemma letting $n \rightarrow \infty$, and obtain
\[
\E_{x}\left[\sup_{s \leq t} \|X_s \|^2\right] \leq C_{T,B} e^{C_{T} \cdot t}, \quad \forall t\le  T.
\]
Since $x \in \overline{B}$ was arbitrary, the desired result follows.
\end{proof}

\subsubsection{Non-explosion criteria involving the density $\rho$}\label{explform}
By \cite[Proposition 1.10]{St99}(a) we know that \eqref{conservative} holds, whenever
\begin{eqnarray}\label{consl1}
a_{ij},g_i-\beta^{\rho,A}_i\in L^1(\R^d,m), \quad 1\le i,j\le d.
\end{eqnarray}
Thus \eqref{consl1} provides a sufficient condition for non-explosion of $\M$ as in Theorem \ref{existhunt} by \eqref{prop1.5eq} which obviously depends on the knowledge of the density $\rho$.\\
A systematic study of non-explosion conditions, more precisely results implying (\ref{conservative}) and involving the density $\rho$ can be found in \cite[Corollary 15]{GT17}. 
\subsection{Recurrence criteria and other ergodic properties involving and not involving the density $\rho$}\label{subsecrec}
The measure $m=\rho\,dx$, where the density $\rho$ is as at the beginning of Section \ref{3} or as in Theorem \ref{1-3}, can be seen to define a stationary distribution. In fact, if the  $L^1 (\R^d, m)$-semigroup $(\widehat{T}_t)_{t>0}$ is conservative, for instance if 
\eqref{consl1} holds or there exists a constant $M\ge 0$ and some $N_0\in \N$, such that 
\begin{eqnarray*}
-\frac{\langle A(x)x, x \rangle}{ \left \| x \right \|^2 +1}+ \frac12\mathrm{trace}(A(x))+ \big \langle \left (2\beta^{\rho,A}-\mathbf{G}\right )(x), x \big \rangle \leq M( \left \| x \right \|^2 +1)( {\rm ln}(\left \| x \right \|^2+1)+1)
\end{eqnarray*}
for a.e. $x\in \R^d\setminus B_{N_0}$, as one can see from the dual version of Theorem \ref{nonextheo} or \cite[Proposition 1.10(c)]{St99}, then $m$ is an invariant measure (for $(T_t)_{t>0}$), i.e. for any $f\in L^1(\R^d,m)$
$$
\int_{\mathbb{R}^d} T_t f \,dm=\int_{\mathbb{R}^d} f \widehat{T}_t  1_{\mathbb{R}^d}\, dm=\int_{\mathbb{R}^d} f \,dm
$$
so that for any $A\in \mathcal{B}(\R^d)$ and $t\ge 0$
\begin{eqnarray*}
\P_{m}(X_t\in A)&:= &\int_{\mathbb{R}^d} \P_x(X_t\in A)\, m(dx)= \int_{\mathbb{R}^d} T_t 1_A (x)\, m(dx)\\
& =& \lim_{n\to \infty} \int_{\mathbb{R}^d} T_t 1_{A \cap B_n}(x)\, m(dx)= \lim_{n\to \infty} \int_{\mathbb{R}^d} 1_{A \cap B_n}(x)\, m(dx)= m(A).
\end{eqnarray*}
However, usually $m$ is not a probability measure, hence $\P_m$ is also not such a measure. But if it is, then $\P_m$ 
is a stationary distribution (if $(\widehat{T}_t)_{t>0}$ is conservative). Main parts of the  monograph \cite{BKRS} focus on the density $\rho$ or more generally on $m$, in case $m$ is a probability measure and aim in deriving properties of both (since both are in general not explicit). \\
We will first consider possibly infinite $m$ and we may assume that $\rho$ is explicit as is explained in the following remark.
\begin{rem}\label{rhoexpl}
All results up to now and further hold exactly in the same form, if we assume that $\rho \in C^{0,1-d/p}_{loc}(\R^d) \cap H^{1,p}_{loc}(\R^d)$ for some $p>d$ with $\rho(x)>0$ for all $x\in \R^d$ is explicitly given from the beginning, that $A:=(a_{ij})_{1\leq i,j \leq d}$ is as in  Theorem \ref{1-3} and that $\mathbf{B}=(b_1,\ldots , b_d)\in L^p_{loc}(\R^d,\R^d)$ satisfies
\begin{eqnarray}\label{41bgen}
\int_{\R^d} \langle \mathbf{B},\nabla f\rangle\,dm=0, \ \ \ \ \forall f \in C^{\infty}_0(\R^d).
\end{eqnarray}
Indeed, we then just have to set $\mathbf{G}:=\beta^{\rho,A}+\mathbf{B}$. Then all conclusions of Theorem \ref{1-3} hold with the explicitly chosen density from above. Note that this also includes the setting of Theorem \ref{1-3} since by its conclusion a $\rho$ like above exists and can hence be \lq\lq explicitly\rq\rq\ chosen. 
\end{rem}
We want to derive explicit conditions for recurrence involving and not involving the density $\rho$ in two general cases where $m$ is a general $\sigma$-finite measure and where $m$ is a finite, yet without loss of generality a probability measure. First, we derive a lemma which leads to irreducibility in the probabilistic sense and strict irreducibility in the sense of generalized Dirichlet forms (see Corollary \ref{irreduci}) and 
as a byproduct leads to a weaker condition for non-explosion (see Remark \ref{conspoint}).
\begin{lem}\label{irreduci0}
Under the assumptions of Remark \ref{rhoexpl}, it holds: 
\begin{itemize}
\item [(i)] Let $A \in \mathcal{B}(\R^d)$ be such that $P_{t_0} 1_A (x_0)=0$ for some $t_0>0$ and $x_0\in \R^d$. Then $m(A)=0$.
\item [(ii)] Let $A \in \mathcal{B}(\R^d)$ be such that $P_{t_0} 1_A (x_0)=1$ for some $t_0>0$ and $x_0\in \R^d$.  Then $P_t 1_{A}(x)=1$ \;for all $(x,t) \in \R^d \times (0,\infty)$.
\end{itemize}
\end{lem}
\begin{proof}
(i) Suppose $m(A)>0$. Choose an open ball $B_{r}(x_0)\subset \R^d$ such that
$$
0<m\left(A\cap B_{r}(x_0)\right)< \infty. 
$$
Let $u:=\rho P_{\cdot} 1_{A \cap B_{r}(x_0)}$. Then $0=u(x_0, t_0) \leq \rho(x_0) P_{t_0} 1_{A}(x_0)=0$. Take $f_n \in C_0^{\infty}(\R^d)$ with $f_n \geq 0$ such that $f_n \rightarrow 1_{A \cap B_{r}(x_0)}$ in $L^1(\R^d, m)$. Then by \eqref{rest} and the explanation right after it, for arbitrary bounded open set $U$ in $\R^d$ and $[\tau_1, \tau_2] \subset (0, \infty)$, there is some $\gamma \in (0,1)$ such that
\[
P_{\cdot} f_n (\cdot) \rightarrow P_{\cdot}1_{A \cap B_{r}(x_0)}(\cdot) \; \text{ in } C^{\gamma; \frac{\gamma}{2}}(\overline{U} \times [\tau_1, \tau_2]),
\]
hence
\begin{equation} \label{app}
u_n:= \rho P_{\cdot}f_n \rightarrow u \; \text{ in } C^{\gamma; \frac{\gamma}{2}}(\overline{U} \times [\tau_1, \tau_2]).
\end{equation}
Fix $T>t_0$ and $U\supset \overline{B}_r(x_0)$. Then (see proof of Theorem \ref{1-3reg}) for all $\varphi \in C_0^{\infty}(U\times (0, T))$
\begin{eqnarray}\label{sol2}
\int_0^T\int_{U} \left ( \frac{1}{2}\langle A \nabla u_n, \nabla \varphi \rangle + u_n \langle \beta, \nabla \varphi \rangle -u_n\partial_t\varphi \right ) dxdt=0,
\end{eqnarray}
where $\beta$ is defined as in the proof of Theorem \ref{1-3reg}. Now take arbitrary but fixed $(x,t) \in B_{r}(x_0) \times (0, t_0)$.
By \cite[Theorem 5]{ArSe}
\[
0 \leq u_n(x,t) \leq u_n(x_0, t_0)\;\exp \left ( C \Big(\frac{\|x_0-x\|^2}{t_0-t}+ \frac{t_0-t}{\min(1,t)} +1 \Big)\right)
\]
and \eqref{app} applied with $U\supset \overline{B}_r(x_0)$, $[\tau_1,\tau_2]\supset  [t,t_0]$ then leads to
\[
0 \leq u(x,t) \leq u(x_0, t_0)\;\exp \left ( C \Big(\frac{\|x_0-x\|^2}{t_0-t}+ \frac{t_0-t}{\min(1,t)} +1 \Big)\right)=0.
\]
Thus, $ P_t 1_{A\cap B_{r}(x_0)}(x)=0$ for all $x \in B_{r}(x_0)$ and $0<t<t_0$, so that
\[
0=\int_{\R^d} 1_{A \cap B_{r}(x_0)} P_t 1_{A\cap B_{r}(x_0)} dm \underset{\text{as } t \rightarrow 0}{ \; \longrightarrow} m(B_r(x_0) \cap A)>0,
\]
which is contradiction. Therefore, we must have $m(A)=0$.\\
(ii) Let $y \in \R^d$ and $0<s<t_0$ be arbitrary but fixed and let $r:=2\|x_0-y\|$ and let $B$ be any open ball. Take $g_n \in C_0^{\infty}(\R^d)$ with $0 \leq g_n \leq 1$ such that $g_n \rightarrow 1_{A \cap B}$ in $L^1(\R^d, m)$. Then by \eqref{rest} and the explanation right after it, there is some $\gamma \in (0,1)$ such that
\begin{equation} \label{appro}
P_{\cdot} g_n (\cdot) \longrightarrow P_{\cdot} 1_{A\cap B} (\cdot) \; \text{ in } C^{\gamma; \frac{\gamma}{2}}(\overline{B}_{r}(x_0) \times [s/2, 2t_0]).
\end{equation}
Fix $T>0$ and $U \supset \overline{B}_{r}(x_0)$. Using the property
$$
\beta=\frac{1}{2}\nabla A+\mathbf{G}-2\beta^{\rho, A}= \mathbf{B}- \beta^{\rho,A} +\frac{1}{2} \nabla A=\mathbf{B}-\frac{1}{2}A \frac{\nabla \rho}{\rho},
$$
and (\ref{41}), we directly get  for all $\varphi \in C_0^{\infty}(U \times (0, T))$
\begin{align} \label{1zero}
\int_0^T\int_{U} \left ( \frac{1}{2}\langle A \nabla \rho, \nabla \varphi \rangle + \rho \langle \beta, \nabla \varphi \rangle -\rho\partial_t\varphi \right ) dxdt =\int_0^T \left(\int_{U} \langle \mathbf{B}, \nabla \varphi \rangle \rho dx \right) dt =0,
\end{align}
and  (cf. the proof of Theorem \ref{1-3reg})  we also get
\begin{align} \label{2zero}
\int_0^T\int_{U} \left ( \frac{1}{2}\langle A \nabla \left(\rho P_{\cdot} g_n\right), \nabla \varphi \rangle + (\rho P_{\cdot} g_n) \langle \beta, \nabla \varphi \rangle -(\rho P_{\cdot} g_n) \partial_t\varphi \right ) dxdt=0.
\end{align}
Now let $u_n(x,t):=\rho(x) \left(1- P_t g_n (x) \right)$. Then $u_n \in H^{1,2}(U \times (0,T))$ and $u_n \geq 0$. Subtracting $\eqref{2zero}$ from $\eqref{1zero}$  implies
$$
\int_0^T\int_{U} \left ( \frac{1}{2}\langle A \nabla u_n, \nabla \varphi \rangle + u_n \langle \beta, \nabla \varphi \rangle -u_n\partial_t\varphi \right ) dxdt=0.
$$
Thus, by \cite[Theorem 5]{ArSe}
\[
0 \leq u_n(y,s) \leq u_n(x_0, t_0)\; \underbrace{ \exp \left ( C \Big(\frac{\|x_0-y\|^2}{t_0-s}+ \frac{t_0-s}{\min(1,s)} +1 \Big) \right)}_{=:C_2}.
\]
By \eqref{appro}
\[
0 \leq\rho(y) \left(1- P_s 1_{A \cap B} (y) \right) \leq C_2 \rho(x_0) \left(1- P_{t_0} 1_{A \cap B} (x_0) \right).
\]
Note that for all $(x,t) \in \R^d \times(0,\infty)$, $P_t 1_{A \cap B_n} 1(x) \nearrow P_t 1_{A} (x)$ as $n \rightarrow \infty$.
Thus,
\[
0 \leq\rho(y) \left(1- P_s 1_{A} (y) \right) \leq C_2 \rho(x_0) \left(1- P_{t_0} 1_{A} (x_0) \right)=0.
\]
Consequently, $P_{s}1_{A}(y)=1$ for any $(y, s) \in \R^d \times (0, t_0)$ which can be extended to  $\R^d \times (0, t_0]$ by continuity. And by the sub-Markovian property, $P_{t_0}1_{\R^d}(y)=1$ for any $y \in \R^d$. Now let $t \in (0, \infty)$ be given. Then there exists $k \in \N \cup \left \{0 \right \}$ such that
\[
k t_0<t \leq  (k+1) t_0
\]
and so $P_t 1_{A} = P_{kt_0+ (t-kt_0)} 1_{A}= \underbrace{P_{t_0} \circ \cdots \circ P_{t_0}}_{k\text{-times}} \circ P_{t-kt_0} 1_{A} =1$.
\end{proof}
\begin{rem}\label{conspoint}
Under the assumptions of Remark \ref{rhoexpl}, 
by Lemma \ref{irreduci0}(ii) we know that  $\M$ is non-explosive, if $\P_x(\zeta=\infty)=1$ for some $x\in \R^d$.
More precisely, if $\P_{x_0}(X_{t_0} \in \R^d)=1$ for some $(x_0, t_0) \in \R^d \times (0, \infty)$, then $\M$ is non-explosive. This (together with Proposition \ref{regular2}, Lemma \ref{fini1}) generalizes and improves  \cite[Lemma 2.5]{Bha} to possibly locally unbounded drift coefficient using a completely different and genuine proof.
\end{rem}
\medskip
$A\in \mathcal{B}(\R^d)$ is called {\it weakly invariant} relative to $(T_t)_{t>0}$, if
$$
T_t (f \cdot 1_A ) (x)=0,\ \  \text{for } m\text{-a.e.}  \ \ x\in \R^d\setminus A,
$$
for any $t> 0$, $f\in L^2(\R^d,m)$. $(T_t)_{t>0}$ is said to be {\it strictly irreducible}, if for any weakly invariant set $A$ relative to $(T_t)_{t>0}$, we have $m(A)=0$ or $m(\R^d\setminus A)=0$.\\
\begin{cor}\label{irreduci}
Under the assumptions of Remark \ref{rhoexpl}, it holds:
\begin{itemize}
\item[(i)] $(T_t)_{t>0}$ is strictly irreducible.  
\item[(ii)] Let $A \in \mathcal{B}(\R^d)$ with $m(A)>0$. Then $\P_x(X_t\in A)>0$ for all $x\in \R^d, t>0$, i.e. $\M$ is irreducible in the probabilistic sense.
\end{itemize}
\end{cor}
\begin{proof}
(i) Let $A \in \mathcal{B}(\R^d)$ be a weakly invariant set with $m(\R^d\setminus A)\not=0$. Then by monotone  approximation with the $L^2$-functions $1_{B_n}$, $n\ge 1$, we get for any $t>0$
$P_t  1_A  (x)=0$, for  $m$-a.e.  $x\in \R^d\setminus A$. Then there exists $t_0>0$ and $x_0 \in \R^d \setminus A$ such that $P_{t_0} 1_A(x_0) =0$. By Lemma \ref{irreduci0}(i), we have $m(A)=0$, as desired.\\
(ii) By contraposition of Lemma \ref{irreduci0}(i), $\mathbb{P}_x\left(X_t \in A \right) = P_t 1_A(x) >0$, for all $x \in \R^d, t>0$.
\end{proof}
\subsubsection{Explicit recurrence criteria for possibly infinite $m$}\label{recurrencegeneral}
We continue with some further definitions. Define the last exit time $L_A$ from $A\in \mathcal{B}(\R^d)$ by
$$
L_A:=\sup \{ t\geq 0: X_t\in A\},\ \ (\sup \emptyset:=0).
$$
$\M$ is called {\it recurrent (in the probabilistic sense)}, if for any $\emptyset\not=U\subset \R^d$, $U$ open, we have 
\begin{eqnarray}\label{classicalrecurrence}
\P_x(L_U=\infty)=1,\ \ \forall x\in \R^d.
\end{eqnarray}
Let $(\vartheta_t)_{t\ge 0}$ be the shift operator of $\M$. Using the shift invariance of $\Lambda:=\{L_U=\infty\}$, the Markov property and the strong Feller property of $(P_t)_{t>0}$, we get for all $x\in \R^d$, $t>0$
\[\mathbb{P}_x(\Lambda)  =  \mathbb{P}_x(\vartheta_t^{-1}(\Lambda))= \mathbb{E}_x[\mathbb{E}_x[ 1_{\Lambda}\circ \vartheta_t \,|\, \F_t]]
=  \mathbb{E}_x[\mathbb{E}_{X_t}[ 1_{\Lambda}]]=P_t \mathbb{E}_{\cdot}[1_{\Lambda}](x).
\]
Thus
\begin{eqnarray}\label{GimTr recurrence}
\eqref{classicalrecurrence}\ \Longleftrightarrow \ \P_x(L_U=\infty)=1\quad \text{for }m\text{-a.e. } x\in \R^d.
\end{eqnarray}
The following is now an easy consequence of the results obtained here, in \cite{GT2} and \cite{Ge}.
\begin{prop}\label{rectrans} 
Consider the assumptions of Remark \ref{rhoexpl}. Then $(T_t)_{t>0}$ (or equivalently $\M$) is either transient or recurrent in the sense of \cite{GT2}.
\begin{itemize}
\item[(i)] Suppose $(T_t)_{t>0}$ is transient  in the sense of \cite{GT2}. Then for any compact $K\subset \R^d$, it holds $\P_{x}(L_{K} < \infty)=1$ for all  $x \in \R^d$. In particular
\begin{eqnarray}\label{wanderinginfty}
\P_x(\lim_{t\to \infty} X_t=\Delta \text{ in } \R^d_{\Delta})=1 \ \text{for any } x \in \R^d.
\end{eqnarray}
\item[(ii)] Suppose $(T_t)_{t>0}$ is recurrent in the sense of \cite{GT2}. Then $\M$ is non-explosive and recurrent (in the probabilistic sense), i.e. \eqref{classicalrecurrence} holds for any nonempty open  $U\subset \R^d$.
\end{itemize}
\end{prop}
\begin{proof} The first assertion follows from Corollary \ref{irreduci}(i) and \cite[Remark 3(b)]{GT2}. \\ 
(i) Applying \cite[Lemma 6]{GT2} and the last part of Lemma \ref{fini1}(i) we get the existence of $g \in L^1(\R^d, m)\cap L^\infty(\R^d, m)$ with $g>0$ everywhere, such that $Rg:=\E_{\cdot} \left[\int_{0}^{\infty} g(X_t)   dt \right]\in L^\infty(\R^d, m)$. Using that $Rg$ is lower semicontinuous by the strong Feller property and essentially bounded, we deduce $Rg(x)< \infty$ for any $x\in \R^d$. Obviously, $0<Rg(x)$ for any $x\in \R^d$. Modifying the proof of \cite[Proposition 10]{GT2} (which originates from \cite{Ge}) with the open sets $U_n:=\{Rg>\frac{1}{n}\}$, $n\ge 1$, and using the strong Feller property of $(P_t)_{t>0}$, we obtain $\P_{x}(L_{U_n} < \infty)=1$ for all  $x \in \R^d, n\ge 1$. Now the first assertion follows easily since $(U_n)_{n\ge 1}$ is an open cover of any compact set $K\subset \R^d$. The second assertion follows from the first since the paths of $\M$ are continuous on the one point compactification $\R^d_{\Delta}$.\\
(ii) (\ref{conservative}) is a consequence of \cite[Corollary 20]{GT2} and $\M$ is hence non-explosive by \eqref{prop1.5eq}. Moreover, the right hand side of \eqref{GimTr recurrence} holds for any $\emptyset\not=U\subset \R^d$, $U$ open, by \cite[Proposition 11(d)]{GT2}. Therefore $\M$ is recurrent in the probabilistic sense.
\end{proof}
\begin{rem}\label{rectransrem}
In Proposition \ref{rectrans}, we get actually equivalences in (i) and (ii). Namely, \eqref{wanderinginfty} implies that \cite[Condition (8) of Proposition 10]{GT2}  is satisfied. Thus \eqref{wanderinginfty} implies transience of $\M$ (or equivalently $(T_t)_{t>0}$) in the sense of \cite{GT2} by \cite[Proposition 10]{GT2}. Likewise, if $\M$ is recurrent (in the probabilistic sense), then it cannot satisfy \eqref{wanderinginfty}. Therefore, by Proposition \ref{rectrans}(i) and its first part, $(T_t)_{t>0}$ must be recurrent in the sense of \cite{GT2}.
\end{rem}
Define for $r\ge 0$,
\begin{equation*}
v_1(r):= \int_{B_r} \frac{\langle A(x)x, x \rangle}{\|x\|^2} m(dx), \ \ \ v_2(r):=\int_{B_r} \left |\langle \mathbf{B}(x),x\rangle \right | m(dx),
\end{equation*}
where $\mathbf{B}$ is defined as in Theorem \ref{1-3} and let 
$$
v(r):=v_1(r)+v_2(r), \ \ a_n:=\int_1^n \frac{r}{v(r)}dr, \ \ n\ge 1.
$$
\begin{theo}\label{rhorecurrence}(Corollary of \cite[Theorem 21]{GT2})
Consider the assumptions of Remark \ref{rhoexpl} and suppose  that
$$
\lim_{n\rightarrow \infty}a_n=\infty \ \ \ \text{and} \ \ \  \lim_{n\rightarrow \infty} \frac{\log(v_2(n)\vee 1)}{a_n}=0.
$$
Then $\M$ is recurrent (in the probabilistic sense), non-explosive and $m$ is an invariant measure for $(T_t)_{t\ge 0}$. 
\end{theo}
\begin{proof}
By \cite[Theorem 21]{GT2}  applied with $\rho(x)=\|x\|$ (the $\rho$ of \cite{GT2} is different from the $\rho$ defined here), the given assumption implies that $(T_t)_{t>0}$ is not transient in the sense of \cite{GT2}. Then apply Proposition \ref{rectrans}. The same considerations apply to $(\widehat{T}_t)_{t>0}$, so that $(\widehat{T}_t)_{t>0}$ is in particular conservative and then $m$ is an invariant 
measure for $(T_t)_{t\ge 0}$ (and also $(\widehat{T}_t)_{t>0})$. 
\end{proof}
\begin{lem}\label{exittimeofball}
Consider the assumptions of Remark \ref{rhoexpl}. Then for any $x\in \R^d$ and $N\in \N$, we have $\P_x(\sigma_{N}<\infty)=1$.
\end{lem}
\begin{proof}
Suppose to the contrary that there exists $N\in \N$ and $x\in \overline{B}_N$ such that $\P_x(\sigma_{N}=\infty)\ge \delta>0$. 
Then $\M$ is not recurrent in the probabilistic sense. Applying Proposition \ref{rectrans}, we obtain  
$
\P_{x}(L_{K} < \infty)=1 \; \text{ for all } x \in \R^d \text{ and any compact }K\subset \R^d.
$
Therefore  $\P_x(\sigma_{N}=\infty)\ge \delta>0$ cannot hold and the assertion follows.
\end{proof}\\
\centerline{}
The following theorem extends \cite[Chapter 6, Theorem 1.2]{Pi} to locally unbounded drift coefficient.
\begin{theo}\label{recurrencepinsky}
Consider the assumptions of Remark \ref{rhoexpl} and suppose that there exists a positive $\psi \in C^2(\R^d)$ and some $N_0\in \N$ such that $L\psi\le 0$ a.e. on $\R^d\setminus B_{N_0}$ 
and $\inf_{\partial B_n}\psi \to \infty$ as $n\to \infty$. Then $\M$  of Theorem \ref{existhunt} is recurrent (in the probabilistic sense) and non-explosive. In particular, the assumptions above are satisfied (take $\psi(x)=\ln\left (\|x\|^2+1\right )+1$), if there is some $N_0\in \N$, such that 
\begin{eqnarray}\label{explicitrecurrencegeneral}
-\frac{\langle A(x)x, x \rangle}{ \left \| x \right \|^2+1}+ \frac12\mathrm{trace}A(x)+ \big \langle \mathbf{G}(x), x \big \rangle \leq 0
\end{eqnarray}
for a.e. $x\in \R^d\setminus B_{N_0}$. 
\end{theo}
\begin{proof}
Clearly, $\M$ is non-explosive by Remark \ref{consstannat}(iii). Let $n\ge N_0$ and $x\in \R^d\setminus \overline{B}_n$ be arbitrary. Choose any $N\in \N$ with $x\in B_N$. 
We will first show that $\P_x(\sigma_{B_{n}}<\infty)=1$. Using that $L\psi\le 0$ a.e. on $\R^d\setminus B_{N_0}$ we can see that
$$
\E_x[\psi(X_{t\wedge \sigma_{B_{n}}\wedge\sigma_N})]\le \psi(x).
$$
Since $\P_x(\sigma_{N}<\infty)=1$ by Lemma \ref{exittimeofball}, we can let $t\to \infty$ and obtain with elementary calculations (cf. for instance the proof of Theorem \ref{nonextheo})
\begin{eqnarray*}
(\inf_{\partial B_N}\psi)\cdot \P_x(\sigma_{B_{n}}=\infty)\le \E_x[\psi(X_{\sigma_N})1_{\{\sigma_{B_{n}}=\infty\}}]\le \E_x[\psi(X_{\sigma_{B_{n}}\wedge\sigma_N})]\le \psi(x).
\end{eqnarray*}
Letting $N\to \infty$ and using the further assumption on $\psi$, we get $\P_x(\sigma_{B_{n}}=\infty)=0$ and the claim is shown. From now on let $n:=N_0+1$. Then obviously $\P_x(\sigma_{B_{n}}<\infty)=1$ for any $x\in B_n$ and by the claim $\P_x(\sigma_{B_{n}}<\infty)=1$ for any $\R^d\setminus \overline{B}_n$. If $x\in \partial B_n$, then by the claim again $\P_x(\sigma_{B_{N_0}}<\infty)=1$ and since $\sigma_{B_{N_0+1}}\le \sigma_{B_{N_0}}$, we finally get
$$
\P_x(\sigma_{B_{n}}<\infty)=1\quad \text{for any } x\in \R^d.
$$
Let $z\in \R^d, s>0$ be arbitrary. Then by the Markov property and since $\M$ is non-explosive
\begin{eqnarray}\label{recuno}
\P_z(X_t\in B_{n} \text{ for some } t\in[s,\infty))=\P_z(\sigma_{B_{n}}\circ \vartheta_s<\infty)=\E_z[\P_{X_s}(\sigma_{B_{n}}<\infty)]=1.
\end{eqnarray}
Hence $\P_z(L_{\overline{B}_{N_0+1}}<\infty)=0$ and the assertion now follows from Proposition \ref{rectrans}. 
\end{proof}
\subsubsection{Uniqueness of invariant probability measures and ergodic properties in case $m$ is a probability measure}\label{uniqueinv}
In this subsection, we suppose (except at the very end of it) that $m$ is  a finite measure. Dividing by a normalizing constant, which will not change the generator $L$, we may without loss of generality assume that $m$ is a probability measure. Coming back to the situation at the beginning of Section \ref{subsecrec}, we have the following:
\begin{rem}\label{invarianceequivalent}
 If $m$ is a probability measure, then $m$ is $(T_t)_{t>0}$-invariant, if and only if it is $(\widehat{T}_t)_{t>0}$-invariant  (cf. \cite[Proposition 1.10(b)]{St99}). In either case $\P_m$ is then a stationary distribution.
\end{rem}
It is clear that the $(\widehat{T}_t)_{t>0}$-invariance of $m$ is equivalent to the conservativeness of $(T_t)_{t>0}$, i.e. to \eqref{conservative}. Consequently, using Remark \ref{invarianceequivalent}, we see that $m$ is an invariant (probability) measure for $(T_t)_{t>0}$, if \eqref{conservative} holds. Therefore, \eqref{conservative2} provides an explicit criterion for 
$m$ to be an invariant (probability) measure. Now, we have the following:
\begin{theo}\label{ergodicinvariant}
 Let $\M$ be as in Theorem \ref{existhunt}. Suppose that $m$ is a probability measure and  that \eqref{conservative} holds. Then:
\begin{itemize}
\item[(i)] $m$ is strongly mixing (cf. \cite{DPZB}) and for arbitrary $x\in \R^d$ and $A \in \mathcal{B}(\R^d)$
$$
\lim_{t\to \infty}\P_x(X_t\in A)=m(A).
$$
\item[(ii)] $m$ is the unique probability measure that is $(P_t)_{t>0}$-invariant (in the sense of \cite{DPZB}). 
\item[(iii)] $m$ is equivalent to $\P_x\circ X_t^{-1}$ for any $(x,t)\in \R^d\times (0,\infty)$.
\item[(iv)] Let $A \in \mathcal{B}(\R^d)$ be such that $m(A)>0$ and $(t_n)_{n\ge 1}\subset (0,\infty)$ be any sequence with $\lim_{n\to \infty}t_n=\infty$. Then
$$
\P_x(X_{t_n}\in A \text{ for infinitely many } n\in \N)=1,\ \ \forall x\in \R^d.
$$
In particular, $\M$ is recurrent.
\end{itemize}
\end{theo}
\begin{proof}
By Theorem \ref{1-3reg}, Lemma \ref{fini1}(i) and Corollary \ref{irreduci}(ii), $(P_t)_{t>0}$  is strong Feller and $\M$ is irreducible (in the probabilistic sense). Then \cite[Proposition 4.1.1]{DPZB} implies that $(P_t)_{t>0}$ is regular. Therefore the assertions (i)-(iii) follow by Doob's Theorem, see  \cite[Theorem 4.2.1]{DPZB}. Then using (i),  assertion (iv) follows by \cite[Theorem 3.4.5]{DPZB}.
\end{proof}
\begin{rem}\label{explicitergodicity}
Assume that as in Remark \ref{rhoexpl}, $\rho$, $A$, $\mathbf{B}$ are explicitly given and that $m=\rho\,dx$ is a probability measure such that \eqref{conservative} holds. Then Theorem \ref{ergodicinvariant} applies. 
This result seems to be new even if $\mathbf{B}\equiv 0$.
\end{rem}
For the rest of the section we do {\it not assume that $m$ is a finite measure} and present a condition that is independent of $\rho$ and makes Theorem \ref{ergodicinvariant} applicable. The following proposition is a variant of \cite[Chapter 6, Theorem 1.3]{Pi} which can be applied to locally unbounded drift coefficients.
\begin{prop}\label{indeprho} Under the assumptions of Theorem \ref{1-3}, suppose that there exists a positive $\psi \in C^2(\R^d)$, some $N_0\in \N$ and $C>0$, such that $L\psi\le -C$ a.e. on $\R^d\setminus B_{N_0}$ 
and $\inf_{\partial B_n}\psi \to \infty$ as $n\to \infty$. 
Then $m$ is finite and $\M$ is non-explosive. In particular, 
\eqref{conservative} holds and by normalizing $m$ if necessary, we can see that the assumptions of Theorem \ref{ergodicinvariant} are satisfied. Thus Theorem \ref{ergodicinvariant}(i)-(iv) hold. In particular, the assumptions above are satisfied (take $\psi(x)=\ln\left (\|x\|^2+1\right )+1$), if there exists a constant  $C> 0$ and some $N_0\in \N$, such that 
\begin{eqnarray}\label{conservative9}
-\frac{\langle A(x)x, x \rangle}{ \left \| x \right \|^2 +1}+ \frac12\mathrm{trace}A(x)+ \big \langle \mathbf{G}(x), x \big \rangle \leq -C\left ( \left \| x \right \|^2+1\right )
\end{eqnarray}
for a.e. $x\in \R^d\setminus B_{N_0}$. 
\end{prop}
\begin{proof}
Using $L\psi(x) \leq -C$ for a.e. $x\in \R^d\setminus B_{N_0}$, the finiteness of $m$ follows by \cite[Corollary 2.3.3]{BKRS} or \cite[Theorem 2]{BRS} for the original result. Since $L\psi(x) \leq M\psi(x)$ for a.e. $x\in \R^d\setminus B_{N_0}$ for any $M>0$, $\M$ is non-explosive by Remark \ref{consstannat}(iii). We may hence assume that the conditions of Theorem \ref{ergodicinvariant} are satisfied.
\end{proof}\\
\centerline{}
In the next example, we shall give a sufficient condition for  \eqref{conservative9} to hold.
\begin{exam}\label{exam2}
Let $I$ be the identity matrix consisting of ones on the diagonal and zeros outside and set $A(x):=\Psi(x) I$ where $\Psi(x) \in H^{1,p}_{loc}(\R^d) \cap C_{loc}^{1-d/p}(\R^d)$ with $\Psi(x)>0$ for all $x \in \R^d$. Let $\phi_1 \in L_{loc}^p(\R^d)$, $\phi_1 \geq 0$ a.e. and $\mathbf{G}(x):= \left( -\phi_1(x)1_{\R^d \setminus B_{N_0}} +\phi_2(x) 1_{B_{N_0}} \right)x$ for some $\phi_2 \in L^p_{loc}(\R^d)$. Suppose that for some $N_0\in \N\cup\{0\}$,
\begin{equation} \label{condition1}
\frac{d}{2} \Psi(x) +C(\|x\|^2+1) \leq \phi_1(x) \|x\|^2  \text{ a.e. $x\in \R^d\setminus B_{N_0}$ }.
\end{equation}
Then \eqref{condition1} implies \eqref{conservative9}. 
\end{exam}
Now we compare our results with results of \cite{ZhXi16}.
\begin{rem}\label{compZhZi}
As one can see from the proof of Theorem \ref{ergodicinvariant} in order to derive the conclusions Theorem \ref{ergodicinvariant}(i)-(iv) one needs for instance  the classical strong Feller property  of $(P_t)_{t>0}$ and the irreducibility of $\M$. In our case, these are directly implied under the conditions of Theorem \ref{1-3} (cf.  Theorem \ref{1-3reg} and  Corollary \ref{irreduci}(ii)). But the conditions to obtain the strong Feller property and irreducibility in \cite{ZhXi16} are quite strong, and there are many cases where  \eqref{conservative9} is satisfied but one cannot obtain the strong Feller property nor irreducibility from the results of \cite{ZhXi16}. The following provides a comparison of \eqref{conservative9} and the rather strong conditions of \cite{ZhXi16}:
\begin{itemize}
\item[(i)] \begin{itemize} \item[a)] If $\mathbf{G}$ is not bounded on an open ball, in order to get the strong Feller property and the irreducibility, \cite[Theorem 1.7]{ZhXi16} needs very strong conditions \cite[(H1'), (H2')]{ZhXi16} such as global uniform ellipticity and boundedness of $A$ and Lipschitz continuity of $A, \mathbf{G}$ and the growth condition $\|\mathbf{G}(x)\| \leq C(1+\|x\|)$  outside an open ball.  For example if we take $A(x)=(1+\|x\|)I$ and $\phi_1(x)=\|x\|^2$, then \eqref{condition1} holds, but (H1') and (H2') in \cite{ZhXi16} are both not satisfied. Thus the conditions of  \cite{ZhXi16} do neither provide global well-posedness, nor strong Feller properties, nor irreducibility and so on, whereas we get the full conclusions of Proposition \ref{indeprho}.
\item[b)] If $\mathbf{G}$ is locally bounded on $\R^d$, to get the strong Feller property and the irreducibility,  \cite[Theorem 1.2]{ZhXi16} also requires quite strong conditions. For example, a diffusion matrix with strong decay such as $A(x)=\exp(-\exp(\|x\|^2)) I$ cannot be handled by results of \cite{ZhXi16}, since \cite[(1.4)]{ZhXi16} is not satisfied, but we do not have such restrictions. Moreover, if $A(x)=I$ and $\phi_1(x)=\exp(\exp(\|x\|^2)))$, then clearly \eqref{condition1} is satisfied, but  \cite[(1.7)]{ZhXi16} is not satisfied. Note that \cite[(1.6), (1.8)]{ZhXi16} requires $A$ to be (besides an $H^{1,q}_{loc}$-condition, $q>d+2$) locally Lipschitz outside an open ball, if $b\equiv 0$ in \cite{ZhXi16}),  which is also stronger than our condition $a_{ij} \in H_{loc}^{1,p}(\R^d)$ for $1 \leq i,j \leq d$ for some $p>d$.
\end{itemize}
\item[(ii)]
We will give a simple example which has a global pathwise unique solution satisfying all conclusions of Proposition \ref{indeprho}, but the non-explosion conditions in  \cite{ZhXi16} do even not allow to obtain the existence of global solution. 
Choose $\Psi(x)=\phi_1(x)=(1+\|x\|)^2$. Then \eqref{condition1} is satisfied, so that by Example \ref{exam2} we may apply Proposition \ref{indeprho} and get a global pathwise unique solution satisfying (i)-(iv) of Theorem \ref{ergodicinvariant}. Now consider 
$$
\phi_2(x)=\frac{1}{\|x-(\frac{N_0}{2},0, \ldots,0)\|^{d/(p+1)}}, \quad x\in \R^d.
$$
Then $\phi_2 \in L^p_{loc}(\R^d)$ and $\lim_{x\to (\frac{N_0}{2},0, \ldots,0)} \phi_2(x) = \infty$, so that $\mathbf{G}$ as defined in Example \ref{exam2} satisfies
\[
\big \langle \mathbf{G}(x), x \big \rangle \longrightarrow \infty \text{ as } x\to (\frac{N_0}{2},0, \ldots,0). 
\]
Thus, the non-explosion condition \cite[(1.5)]{ZhXi16} is not satisfied and obviously global boundedness of $A$ and linear growth of $\|\mathbf{G}\|$ do not hold, which means  \cite[\text{[H1'] [H2']}]{ZhXi16} are not satisfied. In particular, no non-explosion condition of  \cite{ZhXi16} holds.
\item[(iii)] By our method we have directly a candidate for invariant measure, namely $m$. In \cite{ZhXi16} no candidate for invariant measure can be deduced.
\end{itemize}
\end{rem}

\section{An application to pathwise uniqueness and strong solutions: the main theorem}\label{5}
We present here an application of our results to pathwise uniqueness and existence of strong solutions up to $\infty$. 
\begin{theo}\label{unique2}
Let $A=(a_{ij})_{1\leq i,j \leq d}$, $\mathbf{G}$, be as in Theorem \ref{1-3}, such that $A=\sigma\sigma^T$ and $\sigma=(\sigma_{ij})_{1 \le i,j \le d}$ satisfies $\sigma_{ij}\in H^{1,p}_{loc}(\R^d)$, $1\leq i,j \leq d$. Suppose that (\ref{conservative2}) holds for $A$ and $\mathbf{G}$ (or more generally any condition that guarantees the non-explosion of $\M$ of Theorem \ref{existhunt}).  Then the stochastic differential equation
\begin{eqnarray}\label{unique344}
X_t= x_0 + \int_0^t\sigma(X_s)dW_s + \int^{t}_{0} \mathbf{G}(X_s) ds, \ 0\le t<\infty,\ x_0\in \mathbb{R}^d, 
\end{eqnarray}
where $W = (W^1,\dots,W^d)$  is a standard  $d$-dimensional Brownian motion starting from zero, has a pathwise unique and strong solution $(X_t)_{t\ge 0}$. In particular, and without any further assumption, $(X_t)_{t\ge 0}$ satisfies more than classical strong Feller properties (see Theorem \ref{1-3reg}, Proposition \ref{regular2} and Lemma \ref{fini1}(i)), has integrability properties as in Lemma \ref{nest}, is irreducible (by Corollary \ref{irreduci}), satisfies the long time behavior as in Proposition \ref{rectrans} and Remark \ref{rectransrem}, and has further additional properties like in Lemma \ref{irreduci0}, Remark \ref{conspoint}, Lemma \ref{exittimeofball}. Moreover, there are diverse explicit further conditions to guarantee moment inequalities, recurrence and ergodicity, including existence and uniqueness of invariant probability measures for $(X_t)_{t\ge 0}$, see Theorems \ref{supestimate}, \ref{rhorecurrence}, \ref{recurrencepinsky} and Proposition \ref{indeprho} (or for the more general statement see Theorem \ref{ergodicinvariant}). Finally, Lemma \ref{fini1}(ii) also holds for $(X_t)_{t\ge 0}$.
\end{theo}
\begin{proof}
The existence of a weak solution up to $\zeta=\infty$ under the present assumptions follows from Theorems \ref{weakexistence}(i) and \ref{nonextheo}. This then leads to the existence of a pathwise unique and strong global solution by \cite[Theorem 1.3]{Zh11} and \cite[Chapter IV.1. Theorem 1.1]{IW89}. For all other statements, we refer to the corresponding proofs and note that uniqueness in law is implied by pathwise uniqueness (see for instance \cite[Chapter IV.1. Corollary]{IW89}), so any solution to \eqref{unique344} has the same law.
\end{proof}

\centerline{}
Haesung Lee, Gerald Trutnau\\
Department of Mathematical Sciences and \\
Research Institute of Mathematics of Seoul National University,\\
1 Gwanak-Ro, Gwanak-Gu,
Seoul 08826, South Korea,  \\
E-mail: fthslt@snu.ac.kr, trutnau@snu.ac.kr
\end{document}